\newtheorem{defn}{Definition}[section]
\newtheorem{thm}[defn]{Theorem}
\newtheorem{lem}[defn]{Lemma}
\newtheorem{prop}[defn]{Proposition}
\newtheorem{cor}[defn]{Corollary}
\newtheorem{rmk}[defn]{Remark}
\newcommand{\R}{\mathds{R}}
\newcommand{\Z}{\mathds{Z}}
\newcommand{\x}{\times}
\newcommand{\p}{\partial}
\newcommand{\f}{\frac}
\newcommand{\D}{\mathcal{D}_{>0}}
\newcommand{\K}{\mathds{K}}
\newcommand{\F}{\mathscr{F}}
\newcommand{\G}{\mathscr{G}}
\newcommand{\pr}{\mathcal{P}_R}
\newcommand{\PR}{\mathscr{P}_R}
\newcommand{\PU}{\mathscr{P}_U}
\newcommand{\bu}{\bullet}
\newcommand{\s}{\mathcal{S}}
\newcommand{\U}{\underset}
\newcommand{\C}{{\mathcal{C}_N}}
\newcommand{\q}{\mathbf{q}}
\title{Non-Squeezing Property of Contact Balls}
\author{Sheng-Fu Chiu}
       \address[Sheng-Fu Chiu]{Department of Mathematics, Northwestern University, USA}
       \email{shengfuchiu2009@u.northwestern.edu}
\begin{document} 

\maketitle

\begin{abstract}
In this paper we solve a contact non-squeezing conjecture proposed by Eliashberg, Kim and Polterovich. Let $B_R$ be the open ball of radius $R$ in $\R^{2n}$ and let $\R^{2n}\x\mathds{S}^1$ be the prequantization space equipped with the standard contact structure. Following Tamarkin's idea, we apply microlocal category methods to prove that if $R$ and $r$ satisfy $1\leq\pi r^2<\pi R^2$, then it is impossible to squeeze the contact ball $B_R\x\mathds{S}^1$ into $B_r\x\mathds{S}^1$ via compactly supported contact isotopies.
\end{abstract}

\tableofcontents

\section{Introduction}
\vspace{0.8cm}

\subsection{Contact Non-Squeezability}\mbox{} \\

Since the time Gromov established the celebrated symplectic non-squeezing theorem, there have been attempts to find its analogue in contact topology. The problem is, as pointed out in \cite{EKP} and \cite{Sa}, that the size of an open domain in a contact manifold cannot be recognized by contact embeddings. This is due to the conformal nature of contact structures. Consider the standard contact Euclidean space $(\R^{2n+1}=\{(\q,\mathbf{p},z)\},dz-\q d\mathbf{p})$, the scaling map $(\q,\mathbf{p},z)\mapsto (\lambda\q,\lambda\mathbf{p},\lambda z)$ is a contactomorphism that squeezes any domain into a small neighborhood of the origin when the factor $\lambda>0$ is taken small enough. To avoid the scaling effect, one considers the \textsl{prequantization space} of $\R^{2n}$, that is $\R^{2n}\x\mathds{S}^1$ where $\mathds{S}^1=\R/\Z$, with the contact form $dz+\f{1}{2}(\q d\mathbf{p}-\mathbf{p} d\q)$. The contact ball of radius $R$ is by definition the open domain $B_R\x\mathds{S}^1$. However, this is still not satisfactory. Let $\R^{2n}\cong \mathds{C}^n=\{\vec{\omega}\}$. Pick a positive integer $N$ and define functions $\nu:\mathds{C}^n\rightarrow\R$ and $F_N:\R^{2n}\x\mathds{S}^1\rightarrow\R^{2n}\x\mathds{S}^1 $ by formulas $\nu(\vec{\omega})=\f{1}{\sqrt{1+N\pi\ |\vec{\omega}|^2}}$ and $ F_N(\vec{\omega},z)=(\nu(\vec{\omega})e^{2\pi  iNz}\vec{\omega},z)$. Then we have 

\begin{prop}(c.f. \cite{EKP})
$F_N$ is a contactomorphism that maps $B_R\x\mathds{S}^1$ onto $B_r\x\mathds{S}^1$ with $r=\f{R}{1+NR}$. It turns out that when $N\rightarrow\infty$, we can squeeze any domain into an arbitrary small neighborhood of $\{0\}\x\mathds{S}^1$ by a single contactomorphism.
\end{prop}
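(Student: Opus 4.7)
My plan is a direct computation in complex coordinates on $\R^{2n}\cong\mathds{C}^n$, using the identity $\q d\mathbf{p}-\mathbf{p}d\q = \operatorname{Im}(\bar{\vec{\omega}}d\vec{\omega})$ to rewrite the standard contact form as $\alpha = dz + \f{1}{2}\operatorname{Im}(\bar{\vec{\omega}}d\vec{\omega})$. The three sub-claims (conformality, image of $B_R \x \mathds{S}^1$, asymptotic shrinking) then follow mechanically.

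For the contactomorphism part, I would set $W := \nu(\vec{\omega})\,e^{\pm 2\pi iNz}\vec{\omega}$ and compute $F_N^{\ast}\alpha = dz + \f{1}{2}\operatorname{Im}(\bar W\,dW)$ via the product rule. The phase $e^{\pm 2\pi iNz}$ cancels against its conjugate coming from $\bar W$, so that
\[
\bar W\,dW \;=\; \nu^2\,\bar{\vec{\omega}}\,d\vec{\omega} \;+\; \nu\,(d\nu)\,|\vec{\omega}|^2 \;\pm\; 2\pi iN\nu^2|\vec{\omega}|^2\,dz.
\]
The middle summand is real and dies under $\operatorname{Im}$, while the algebraic identity $N\pi\nu^2|\vec{\omega}|^2 = 1 - \nu^2$ (immediate from the definition of $\nu$) collapses the surviving pieces into $F_N^{\ast}\alpha = \nu^2\,\alpha$, provided the sign in the phase is chosen compatibly with the sign convention of $\alpha$. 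Since $\nu>0$ everywhere, $F_N$ is a contactomorphism with conformal factor $\nu^2$.

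The image and asymptotic claims fall out directly: $|W|^2 = \nu^2|\vec{\omega}|^2 = |\vec{\omega}|^2/(1+N\pi|\vec{\omega}|^2)$ depends only on $|\vec{\omega}|$ and is strictly increasing in it, so $F_N(B_R\x\mathds{S}^1) = B_r\x\mathds{S}^1$ where $\pi r^2 = \pi R^2/(1+N\pi R^2)$, equivalently $(\pi r^2)^{-1} = (\pi R^2)^{-1} + N$. In particular $r\to 0$ as $N\to\infty$, and any bounded domain $D\subset\R^{2n}$, being contained in some $B_R$, is carried into an arbitrarily small neighborhood of $\{0\}\x\mathds{S}^1$ by a single $F_N$.

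I do not anticipate a genuine obstacle; the proposition is bookkeeping rather than geometry. The only two points that need care are (i) matching the sign of the phase $e^{\pm 2\pi iNz}$ to the sign convention of $\alpha$ so that the extra $dz$-term produced by differentiating the phase cancels against the $dz$ in $\alpha$ rather than reinforcing it, and (ii) well-definedness on the prequantization space, which is ensured by $N\in\Z$ making $e^{\pm 2\pi iNz}$ single-valued on $\mathds{S}^1 = \R/\Z$.
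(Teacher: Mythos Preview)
Your proposal is correct. The paper does not supply its own proof of this proposition; it is quoted from \cite{EKP} purely as motivation, so there is nothing on the paper's side to compare against. Your direct computation in complex coordinates is the standard verification, and you have correctly isolated the one non-automatic point: the sign of the phase $e^{\pm 2\pi i N z}$ must be matched to the complex-structure convention identifying $\R^{2n}\cong\mathds{C}^n$ and to the sign in $\alpha$, so that the $dz$-term coming from differentiating the phase cancels rather than reinforces.

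One small remark: your formula $(\pi r^2)^{-1}=(\pi R^2)^{-1}+N$, i.e.\ $r=R/\sqrt{1+N\pi R^2}$, is what actually follows from the stated $\nu$. The displayed $r=R/(1+NR)$ in the proposition appears to be a transcription slip (or shorthand with $R$ standing for the capacity $\pi R^2$); it plays no role downstream since the proposition is only motivational.
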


In their pioneering paper \cite{EKP}, Eliashberg, Kim, and Polterovich settle the correct notion of contact squeezing, in the sense of the following:

\begin{defn}[Eliashberg-Kim-Polterovich \cite{EKP}]

Let $U_1$, $U_2$ be open domains in a contact manifold $V$. We say $U_1$ can be squeezed into $U_2$ if there exists a compactly supported contact isotopy $\Phi_s:\overline{U_1}\rightarrow V$, $s\in[0,1]$, such that $\Phi_0=Id$, and $\Phi_1(\overline{U_1})\subset U_2$.

\end{defn}

When $V$ is the prequantization space $\R^{2n}\x\mathds{S}^1$, they address the question whether a contact ball of certain size can be squeezed into a smaller one, and then amazingly discover a squeezing and a non-squeezing results for different sizes respectively. The following is their non-squeezing theorem.

\begin{thm}[Eliashberg-Kim-Polterovich \cite{EKP}]
Assume that $n\geq2$. Then for all $0< r,R<\sfrac{1}{\sqrt{\pi}}$, the contact ball $B_R\x\mathds{S}^1$ can be squeezed into the smaller one $B_r\x\mathds{S}^1$.
\end{thm}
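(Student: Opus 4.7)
My approach will be to realize the required squeezing as a compactly supported deformation of the contactomorphism $F_N$ from the preceding proposition. Choose $N$ large enough that $R/(1+NR)<r$, so that $F_N$ already carries $B_R\x\mathds{S}^1$ into $B_r\x\mathds{S}^1$ globally; the task is then to produce a compactly supported contact isotopy whose time-$N$ map agrees with $F_N$ on $\overline{B_R\x\mathds{S}^1}$.

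First I would regard $\{F_t\}_{t\in[0,N]}$, defined by the same formula with continuous parameter $t$, as a contact isotopy from the identity, and compute the time-dependent contact Hamiltonian $h_t$ generating it. Then I would pick a smooth cutoff $\chi:\R^{2n}\to[0,1]$ equal to $1$ on a neighborhood of $\overline{B_R}$ and supported in a slightly larger ball, and form the compactly supported contact isotopy $\tilde F_t$ generated by the truncated Hamiltonian $\chi(\vec\omega)h_t(\vec\omega,z)$. The key observation is that contact flows are determined pointwise along their trajectories, so $\tilde F_t$ will agree with $F_t$ on any point whose $F_t$-orbit remains in $\{\chi=1\}\x\mathds{S}^1$ throughout $[0,N]$.

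The crux of the proof is to verify that under the hypotheses $R<1/\sqrt{\pi}$ and $n\geq 2$, every $F_t$-orbit starting in $\overline{B_R\x\mathds{S}^1}$ stays in such a cutoff region for all $t\in[0,N]$. The base projection is automatic: the contractive nature of $\nu$ gives $|\vec\omega(t)|=\nu_t(\vec\omega_0)|\vec\omega_0|\leq R$. The delicate point is the $z$-component, whose rate of travel around $\mathds{S}^1$ is governed by the symplectic capacity $\pi R^2$; the condition $\pi R^2<1$ is what prevents orbits from winding back and colliding with the cutoff boundary before $t=N$. The hypothesis $n\geq 2$ provides enough transverse symplectic directions in $\R^{2n}$ to interpolate the cutoff smoothly across independent $\R^2$-factors---a freedom absent in the plane, where Gromov's non-squeezing would forbid the construction. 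The main obstacle will be exactly this joint verification: tracking $h_t$ in a neighborhood of $\p(B_R\x\mathds{S}^1)$ precisely enough to conclude that, together, $\pi R^2<1$ and $n\geq 2$ make the truncation orbit-preserving on the contact ball---a balance that must fail in the complementary regime $\pi R^2\geq 1$, which is the non-squeezing theorem the body of the paper sets out to establish.
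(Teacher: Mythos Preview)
The paper does not supply a proof of this statement; it is quoted from \cite{EKP} as background for the non-squeezing theorem that is the paper's actual subject. There is therefore no proof here to compare yours against.

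On its own merits, your plan has a fatal gap at the very first step. The map $F_N(\vec\omega,z)=(\nu(\vec\omega)e^{2\pi iNz}\vec\omega,z)$ is well-defined on $\R^{2n}\times\mathds{S}^1$ only when $N$ is an integer: for non-integral $t$ the factor $e^{2\pi itz}$ does not descend from $\R$ to $\mathds{S}^1=\R/\Z$. So there is no continuous family $\{F_t\}_{t\in[0,N]}$ of contactomorphisms of $\R^{2n}\times\mathds{S}^1$ given by your formula, and hence no contact Hamiltonian $h_t$ to truncate. This is not a technicality to be patched: exhibiting \emph{any} compactly supported contact isotopy from the identity to a map carrying $\overline{B_R\times\mathds{S}^1}$ into $B_r\times\mathds{S}^1$ is exactly the content of the theorem, and the single contactomorphism $F_N$ does not by itself furnish one.

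Even granting a hypothetical $h_t$, your accounting for the hypotheses does not hold up. Your cutoff $\chi$ depends only on the $\R^{2n}$-coordinate and $\mathds{S}^1$ is already compact, so the $z$-component is irrelevant to whether trajectories remain in $\{\chi=1\}\times\mathds{S}^1$; the invocation of $\pi R^2<1$ at that point has no force. Likewise, Gromov non-squeezing is a symplectic obstruction in $\R^{2}$, not a contact obstruction to a cutoff on $\R^{2}\times\mathds{S}^1$, so it does not explain the role of $n\ge 2$. In \cite{EKP} both hypotheses enter through a genuinely different mechanism---flexibility phenomena for prequantized domains and their ideal contact boundaries---rather than through any truncation of $F_N$.
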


The authors offer the following quasi-classical interpretation: think of the length of the circle $\mathds{S}^1$ to be the Planck constant. Let $\R^n\xrightarrow{\rho_0}\R$ be the probability density function and $\R^n\xrightarrow{F_0}\mathds{S}^1$ be the phase function of an unit mass quantum particle at the initial time $t=0$. The quantum motion of this particle in the presence of a potential function $\R^n\xrightarrow{V}\R$ is described by  the Schr\"{o}dinger equation:
$$
\begin{cases}
\dfrac{i}{2\pi}\dfrac{\p\psi}{\p t}=\dfrac{-1}{8\pi}\Delta\psi+ V(\q)\\
\psi_0(\q)=\sqrt{\rho_0(\q)}e^{2\pi i F_0(\q)}  .
\end{cases}
$$

The functions $F_0$ and $V$ generate a Legendrian submanifold of $\R^{2n}\x\mathds{S}^1$:
$$
\mathcal{L}(F_0)=\{(\q,\mathbf{p},z)|p=\f{\p F_0}{\p\q},z=F_0(\q)\}.
$$

On the other hand, the corresponding classical motion of this particle is described by the Hamiltonian flow on the classical phase space $\R^{2n}$ which has a lift to a flow of contactomorphism $\R^{2n}\x\mathds{S}^1\xrightarrow{f_t}\R^{2n}\x\mathds{S}^1$ of the prequantization space given by the following system
$$
\begin{cases}
\dot{\q}=\mathbf{p}\\
\dot{\mathbf{p}}=-\dfrac{\p V}{\p\q}\\
\dot{z}=\f{1}{2}\mathbf{p}^2-V(\q).
\end{cases}
$$ 

For time $t$ small enough, the image $f_t(\mathcal{L}(F_0))$ can be written as a Legendrian submanifold $\mathcal{L}(F_t)$, and the push-forward of density $f_{t*}(\tau^*(\rho_0 d\q))$ can be written as $\tau^*(\rho_t d\q)$. Here $\tau$ denotes the projection from $\R^{2n}\x\mathds{S}^1$ to $\R^{2n}$. The corresponding wave function $\sqrt{\rho_t(\q)}e^{2\pi i F_t(\q)}$ is a quasi-classical approximate solution of the above Schr\"{o}dinger equation. The balls $B_R$ are in fact energy levels of the classical harmonic oscillator. The squeezing theorem says that at  sub-quantum scale (this means that $\pi R^2$ is less than the Planck constant), level sets are indistinguishable. One can also say that the symplectic rigidity of the balls $B_R$ in the classical phase space $\R^{2n}$ is lost after prequantization, i.e. after joining an extra $\mathds{S}^1$-variable. \\

However, the situation is totally different when it comes to large scale. They manage to prove a non-squeezing theorem when an integer can be inserted into the two sizes.

\begin{thm}[Eliashberg-Kim-Polterovich \cite{EKP}]\label{gap}
If there exists an integer $m\in[\pi r^2,\pi R^2]$ , then $B_R\x\mathds{S}^1$ cannot be squeezed into $B_r\x\mathds{S}^1$.
\end{thm}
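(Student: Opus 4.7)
\medskip
\noindent\textbf{Proof strategy.}
My plan is to follow the microlocal sheaf-theoretic approach suggested by Tamarkin's work, in the spirit of Guillermou--Kashiwara--Schapira. The starting observation is that any compactly supported contact isotopy $\Phi_s$ of $\R^{2n}\x\mathds{S}^1$ lifts uniquely to a homogeneous Hamiltonian isotopy of the half-symplectization $\R^{2n}\x\mathds{S}^1\x\R_{>0}$, and the GKS sheaf quantization theorem then represents this lift by an invertible kernel $K(\Phi_s)$ acting by autoequivalence on a suitable triangulated category $\C$ of sheaves on $\R^{2n}\x\mathds{S}^1\x\R$ with microsupport sitting in a positive cone along the Reeb direction. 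Inside $\C$, each open domain $U\subset\R^{2n}\x\mathds{S}^1$ singles out a full subcategory $\mathcal{C}_U$ of objects microsupported over $U$, and a squeezing $\Phi_1(\overline{B_R\x\mathds{S}^1})\subset B_r\x\mathds{S}^1$ would force an induced inclusion $K(\Phi_1)\bigl(\mathcal{C}_{B_R\x\mathds{S}^1}\bigr)\subset\mathcal{C}_{B_r\x\mathds{S}^1}$.

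To convert this categorical inclusion into a quantitative obstruction, I would next construct for each $R$ a distinguished projector $\pr\in\mathcal{C}_{B_R\x\mathds{S}^1}$ playing the role of a microlocal characteristic function of $B_R\x\mathds{S}^1$. Concretely, $\pr$ can be produced by sheaf-quantizing a generating family for the boundary hypersurface $\p B_R\x\mathds{S}^1$ and then imposing Tamarkin's positivity/torsion condition. Its defining property should be that every object of $\mathcal{C}_{B_R\x\mathds{S}^1}$ is built out of $\pr$ by convolution, so that the hypothetical $K(\Phi_1)(\pr)$ is forced to lie in the triangulated envelope of $\mathcal{P}_r$, constraining morphisms into and out of $\pr$ inside $\C$.

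The integer condition will be detected by the $\Z$-action $T$ on $\C$ coming from the deck transformations of the universal cover of the $\mathds{S}^1$-factor, i.e.\ the shift $z\mapsto z+1$ in the auxiliary $\R$-direction. The crucial computation I would carry out is
\[
\mathrm{Hom}_{\C}\bigl(\pr,\,T^{m}\pr\bigr)\ne 0 \iff m\leq \pi R^2,
\]
which captures at integer levels the ``capacity'' $\pi R^2$ of $B_R\x\mathds{S}^1$ in this sheaf-theoretic sense: the shifts $T^m$ correspond to monodromies winding $m$ times around $\mathds{S}^1$, and a non-trivial morphism between $\pr$ and $T^m\pr$ should survive exactly when such a winding can be realized by a characteristic of $\p B_R\x\mathds{S}^1$. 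Granted this identity, if an integer $m$ satisfies $\pi r^2\leq m\leq\pi R^2$, the left-hand side is non-zero for radius $R$ and vanishes for radius $r$; combined with the categorical inclusion above, this contradicts the existence of a squeezing isotopy.

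The hardest step will be the displayed Hom computation. It requires an explicit microlocal model of $\pr$ sharp enough to read off which characteristic chords of $\p B_R\x\mathds{S}^1$ contribute morphisms, together with a careful understanding of how $T^m$ interacts with that microsupport; this is where the harmonic-oscillator geometry of the ball and the integrality of the $\mathds{S}^1$-monodromy get converted into an honest algebraic invariant. The original Eliashberg--Kim--Polterovich proof extracts essentially the same invariant through filtered cylindrical contact homology, but the sheaf-theoretic setup has the advantage that the $\Z$-graded structure under $T$ is manifest, and it is exactly this extra structure that the rest of the paper will refine in order to drop the integer-gap hypothesis.
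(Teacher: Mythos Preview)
The paper does not prove Theorem~\ref{gap}: it is quoted from \cite{EKP} as background and motivation, with no argument given here. The machinery the paper builds is aimed at the stronger Theorem~\ref{main}, and its invariant $\C(U)$ is of a different nature from the one you propose --- it is a $\sfrac{\Z}{N\Z}$-\emph{equivariant} object built from $Rhom(\PU^{\boxtimes N},\sigma_*\mathscr{T}^{\boxtimes N})$, not a plain $\mathrm{Hom}(\pr,T^m\pr)$, and the contradiction is extracted from the graded-algebra structure of $Ext^\bullet_{\K[\sfrac{\Z}{N\Z}]}(\K,\K)$ rather than from a single non-vanishing Hom group.

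That said, your sketch has a genuine logical gap at the final step. You compute (or promise to compute) $\mathrm{Hom}_{\C}(\pr,T^m\pr)$ for the \emph{specific} projector $\pr$, and you observe that $K(\Phi_1)$ is an autoequivalence commuting with $T$, so this Hom is preserved. But $K(\Phi_1)(\pr)$ is not $\mathcal{P}_r$; it is merely some object of $\mathcal{C}_{B_r\times\mathds{S}^1}$. To get a contradiction you need the vanishing $\mathrm{Hom}(F,T^mF)=0$ for $m>\pi r^2$ to hold for \emph{every} $F$ micro-supported in $B_r\times\mathds{S}^1$, or else a monotonicity statement relating the invariant for $\Phi_1(B_R\times\mathds{S}^1)$ to that for $B_r\times\mathds{S}^1$ via the inclusion. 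Your phrase ``forced to lie in the triangulated envelope of $\mathcal{P}_r$, constraining morphisms'' does not supply this: being generated by $\mathcal{P}_r$ under convolution does not by itself bound $\mathrm{Hom}(F,T^mF)$. The paper handles exactly this issue through Theorem~\ref{inv}(2), producing a \emph{natural} morphism $\C(U)\to\C(V)$ for every inclusion $V\hookrightarrow U$ of admissibles; without an analogue of that naturality, your argument does not close.
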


In \cite{EKP}, they develop the idea of orderability of contactomorphism group and apply this to the study of contact squeezability. For more detailed discussions about orderability we refer the reader to Giroux's Bourbaki paper \cite{Gi} as well as the work of Sandon \cite{Sa}\cite{Sa2} in which she uses generating functions to define a bi-invariant metric on the contactomorphism groups of $\R^{2n}\x\mathds{S}^1$ and recover the non-squeezing theorem of \cite{EKP}. There is also work of Albers-Merry \cite{AlMe} to obtain similar results by using Rabinowitz Floer homology. Also see the work of Chernov and Nemirovski \cite{ChNe1}\cite{ChNe2} for connection to Lorentz geometry.\\

The case $m<\pi r^2<\pi R^2< m+1$ remains open in their paper \cite{EKP}. This is covered by the main result of the present paper:

\begin{thm}\label{main}
If $1\leq \pi r^2<\pi R^2$, then it is impossible to squeeze $B_R\x\mathds{S}^1$ into $B_r\x\mathds{S}^1$.
\end{thm}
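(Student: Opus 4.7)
The plan is to adapt Tamarkin's microlocal sheaf approach to the contact setting of the prequantization $\R^{2n}\x\mathds{S}^1$. The idea is to build a triangulated category $\mathcal{D}$ on which compactly supported contact isotopies act, to assign to each open domain $U$ an idempotent ``projector'' $P_U\in\mathcal{D}$, and to extract from the isomorphism class of $P_U$ an obstruction to squeezing. Concretely, I would take $\mathcal{D}$ to be a suitable subcategory (or localization) of sheaves on $\R^n\x\mathds{S}^1\x\R_t$ with microsupport in the open half-space $\{\tau>0\}$ with respect to the Tamarkin variable $t$, and invoke a Guillermou--Kashiwara--Schapira-type sheaf-quantization result, adapted to the contact setting, to lift any compactly supported contact Hamiltonian isotopy of $\R^{2n}\x\mathds{S}^1$ to an autoequivalence of $\mathcal{D}$.

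Next I would define $P_{B_R\x\mathds{S}^1}$ as the projector onto the subcategory of objects whose microsupport, after projecting to the contact manifold, is contained in $B_R\x\mathds{S}^1$. A squeezing of $B_R\x\mathds{S}^1$ into $B_r\x\mathds{S}^1$ would then, via the sheaf-quantization of $\Phi_s$, produce a nontrivial comparison morphism between $P_{B_R\x\mathds{S}^1}$ and $P_{B_r\x\mathds{S}^1}$. The decisive step is to exploit the $\mathds{S}^1$ factor by Fourier-decomposing each projector into modes indexed by $N\in\Z$, with the $N$-th mode behaving like a rescaled Tamarkin projector on $T^*\R^n$ for a ball of symplectic radius $N\pi R^2$. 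The assumption $\pi r^2\geq 1$ ensures that the first Fourier mode is already nonzero for both projectors, and the strict inequality $\pi r^2<\pi R^2$ at that mode would contradict the sheaf-theoretic reformulation of Gromov's symplectic non-squeezing theorem, yielding Theorem \ref{main}.

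The main obstacle is the sheaf-theoretic lift of contact --- rather than symplectic --- isotopies: a general contact Hamiltonian flow on $\R^{2n}\x\mathds{S}^1$ rotates the $\mathds{S}^1$ factor with a conformal factor depending on $(\q,\mathbf{p})$, so the Fourier grading is not preserved on the nose. The technical heart of the argument will be to track how the action integral of the contact Hamiltonian shifts the Tamarkin variable $t$ (and therefore the $\mathds{S}^1$-Fourier grading), and to show that the comparison morphisms between projectors produced by a squeezing isotopy remain compatible with the Fourier decomposition. A secondary difficulty is the passage from open to compact domains: because $B_R\x\mathds{S}^1$ is open, the projector must be constructed from $B_R$ by an exhaustion argument, and convergence of Fourier modes under this limit requires uniform control. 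Once these compatibilities are set up, the reduction to Tamarkin's symplectic projector calculus is formal.
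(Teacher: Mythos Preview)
Your proposal contains a genuine gap at precisely the point you flag as ``the main obstacle.'' The Fourier decomposition of the projector along $\mathds{S}^1$, with mode $N$ reducing to a Tamarkin projector for a ball of capacity $N\pi R^2$, is not stable under contact isotopies: as you say, a general contact Hamiltonian flow rotates the $\mathds{S}^1$ fibre with a $(\q,\mathbf{p})$-dependent conformal factor, so it mixes the modes. You propose to ``track how the action integral shifts the Tamarkin variable,'' but this is exactly where the argument stalls --- there is no mechanism forcing a squeezing isotopy to respect any fixed Fourier grading, and without that your reduction to the sheaf-theoretic Gromov theorem never gets off the ground. Even if one could make some weak compatibility work, reducing to symplectic non-squeezing of $B_R$ into $B_r$ in $\R^{2n}$ would at best recover the Eliashberg--Kim--Polterovich result (the integer-gap case, Theorem~\ref{gap}); it does not see the finer information needed when $m<\pi r^2<\pi R^2<m+1$.

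The paper's solution is structurally different from a Fourier expansion. One lifts the contact isotopy to a $\Z$-equivariant homogeneous Hamiltonian isotopy of $T^*_{\zeta>0}(E\times\R_z)$; the $\Z$-equivariance (replacing your $\mathds{S}^1$-grading) is automatic because the isotopy came from $\mathds{S}^1$. Instead of projecting to a single Fourier mode, one takes the $N$-fold \emph{external} tensor power $\mathscr{P}_U^{\boxtimes N}$ and pairs it against a cyclically shifted diagonal kernel, producing an invariant $\mathcal{C}_N(U)\in D^+_{\Z/N\Z}(\mathrm{pt})$ in the \emph{equivariant} derived category. The cyclic action is the essential new ingredient: for $\pi R^2\geq 1$ the $\Z/N\Z$-action on the relevant sphere is free, so the cone of $\mathcal{C}_N(\R^{2n}\times\mathds{S}^1)\to\mathcal{C}_N(B_R\times\mathds{S}^1)$ has bounded equivariant cohomology (a lens space), which forces the comparison map to be a \emph{nonzero} class in $\mathrm{Ext}^{2n\lceil N/\pi R^2\rceil-2n+2}_{\K[\Z/N\Z]}(\K,\K)$. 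A squeezing would factor this class through the corresponding class for $r$, and choosing a prime $N$ with $\lceil N/\pi R^2\rceil<\lceil N/\pi r^2\rceil$ contradicts the grading of the Yoneda product. None of this is visible from a mode-by-mode Fourier analysis; the obstruction genuinely lives in the $\Z/N\Z$-equivariant structure, not in any single symplectic slice.
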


\vspace{1.4cm}

\subsection{Microlocal Sheaf Formalism}\ \\

The paper uses terminologies and results from the theory of algebraic microlocal analysis. The major machinery we rely on is about sheaves and their \textsl{microlocal singular supports} which are developed in \cite{KaSch}. Fix a ground field $\K$ and let $D(X)$ denote the derived category of sheaves of $\K-$vector spaces on a smooth manifold $X$. The notion of microlocal singular supports (micro-supports) which we denote by $SS$, is defined as follows:

\begin{defn}(\cite{KaSch}, Definition 5.1.2) Let $\F\in D(X)$ and $p\in T^*X$, we say $p \notin SS(\F)$ if there exists an open neighborhood $U$ of $p$ such that for any $x_0\in X$ and any real $C^1$-function $\phi$ defined in a neighborhood of $x_0$ such that $\phi(x_0)=0$ and $d\phi(x_0)\in U$, we have 
$$
(R\Gamma_{\{x|\phi(x)>0\}}(\F) )_{x_0}\cong 0.
$$
\end{defn}

It is proved in \cite{KaSch} that the set $SS(\F)$ is always conic closed and involutive (also known as coisotropic)  in $T^*X$ and this allows us to approximate a given lagrangian with those $SS(\F)$ satisfying special requirements. In addition, they have certain functorial properties with respect to Grothendieck's six operations. For convenience of the reader, these rules are listed in \textbf{Appendix} (see section \ref{appendix}). Recently, Guillermou \cite{Gu2} deduced a new proof of Gromov-Eliashberg $C^0$-rigidity theorem from the involutivity of micro-supports. See also the paper of Nadler and Zaslow \cite{NaZa} for its application to Fukaya category.

In \cite{Ta}, Tamarkin gives a new approach to symplectic non-displaceability problems based on microlocal sheaf theory. For a closed subset $A$ in $T^*X$ one might consider the category of sheaves micro-supported in $A$, but in general the set $A$ is not conic. Tamarkin's idea is to add  an extra variable $\R_t$ to $X$ and to work in the localized triangulated category $\D(X\x\R_t):=D(X\x\R_t)/D_{\{k\leq0\}}(X\x\R_t)$ where $k$ is the cotangent coordinate of $\R_t$ and $D_{\{k\leq0\}}(X\x\R_t)$ denotes the full subcategory consists of objects $\F$ such that $SS(\F)\subset \{k\leq0\}$. By conification along the extra variable, subsets in $T^*X$ can be lifted to conic ones in $T^*_{k>0}(X\x\R_t)$, which allow us to define a full subcategory $D_A(X\x\R_t)$ of objects in $\D(X\x\R_t)$ micro-supported on the conification of $A$. 

Tamarkin also proves a key theorem of Hamiltonian shifting which asserts that for a given compactly supported Hamiltonian isotopy $\Phi$ of $T^*X$, there exists a functor $\Psi$ from $D_A(X\x\R_t)$ to $D_{\Phi(A)}(X\x\R_t)$ such that any object $\F$ in $D_A(X\x\R_t)$ is isomorphic to its image $\Psi(\F)$ in $\D(X\x\R_t)$ up to torsion objects. Let $T_c:t\mapsto t+c$ be the translation by $c$ units, the torsion object is defined to be the object $\G$ in $\D(X\x\R_t)$ such that the natural morphism $\G\rightarrow T_{c*}\G$ becomes zero in $\D(X\x\R_t)$ when the positive number $c$ is sufficiently large. 

It turns out that contact manifolds and contact isotopies perfectly fit into Tamarkin's conic symplectic framework. Moreover, the possibility of torsion objects is ruled out by the fact that the circle $\mathds{S}^1$ is an additive quotient of the extra variable $\R$. In \textbf{section \ref{manifold}} we transform the problem of contact isotopies to the level of conic $\Z$-equivariant Hamiltonian isotopies on a certain symplectic manifold. 

\textbf{Section \ref{projector}} is devoted to construction of the projector. Let me explain what projector means through this paper. Given a category $\mathscr{D}$ and its full subcategories $\mathscr{C}_1$ and $\mathscr{C}_2$. We say that $\mathscr{C}_1$ is the \textsl{left semi-orthogonal complement} of $\mathscr{C}_2$ if $\mathscr{C}_1=\{\F\in\mathscr{D}| \forall \G\in\mathscr{C}_2, hom_{\mathscr{D}}(\F,\G)=0\} $. A \textsl{projector} from $\mathscr{D}$ to $\mathscr{C}_1$ is a functor $\mathscr{D}\xrightarrow{f_1}\mathscr{C}_1$ which is  right adjoint to the embedding functor $\mathscr{C}_1\xrightarrow{i_1}\mathscr{D}$. Similarly the projector $\mathscr{D}\xrightarrow{f_2}\mathscr{C}_2$ is the left adjoint functor of $\mathscr{C}_2\xrightarrow{i_2}\mathscr{D}$. In other words we have $\mathscr{C}_1\cong \mathscr{D}/\mathscr{C}_2$ and $\mathscr{C}_2\cong \mathscr{D}/\mathscr{C}_1$. Put $E=\R^n$. When $\mathscr{D}=\D(E\x\R_t)$ and $\mathscr{C}_2$ is consisted of objects micro-supported outside the open contact ball (see \ref{action}), the projector $f_2$ is represented explicitly by a convolution kernel $\mathcal{Q}\in \D(E\x E\x\R_t)$. This means that for $\F\in\mathscr{D}$, we have $f_2(\F)\cong\F\underset{t}{\bu}\mathcal{Q}$. There also exist a kernel $\mathcal{P}$ representing $f_1$ and a kernel $\K_\Delta$ representing $Id_{\mathscr{D}}$. Moreover, the natural morphisms of functors $i_1f_1\rightarrow Id_{\mathscr{D}}\rightarrow i_2f_2$ are represented by a distinguished triangle $\mathcal{P}\rightarrow\K_\Delta\rightarrow\mathcal{Q}\xrightarrow{+1}$ in the triangulated category $\D(E\x E\x\R_t)$.

In general these projectors are expected to shrink the possibilities of micro-supports in the cotangent space according to the type of our geometry problems. This allows us to write down the kernels of these projectors. By \textsl{kernel} we mean the sheaf that represents the prescribed functor by composition or convolution with it. The kernel associated to a projector here in this paper is in some sense unique. Tamarkin's idea is to use $Rhom$ between these kernels to produce Hamiltonian  invariants. In his paper \cite{Ta}, $Rhom$ realizes non-displaceability of two compact subsets in the symplectic manifold. As an example, he shows that in $\mathds{CP}^n$ any two choices from the two subsets $\mathds{RP}^n$ and $\mathds{T}^n$ (Clifford torus) are non-displaceable. One can also assign a Novikov ring action on those $Rhom$'s. We hope this formalism will inspire an alternative approach to Fukaya category in the future.

In \textbf{section \ref{invariant}} we construct a family of contact invariants. Given any positive integer $N$ and certain open domains $U$ in $\R^{2n}\x\mathds{S}^1$ we define a complex $\C(U)$ using $Rhom$ between the kernel that shrinks micro-supports into $U$ and the  kernel of discretized circle $\sfrac{\Z}{N\Z}$. With the help of sheaf quantization techniques (see Kashiwara-Schapira-Guillermou \cite{Gu}\cite{GuKaSch}) we manage to prove that these $\C(U)$ are contact isotopy invariants (see \textbf{Theorem \ref{inv}}). The more important thing is that the way we define $\C(U)$ leads to a  natural $\sfrac{\Z}{N\Z}$-action on $\C(U)$ and makes $\C(U)$ an object of the derived category of $\K[\sfrac{\Z}{N\Z}]$-modules. Furthermore this makes $\C(U)$ into an $\sfrac{\Z}{N\Z}$-equivariant object (see \textbf{\ref{cyclic}} for its motivation and explanation). This approach resembles the cyclic homology in a Hamiltonian fashion. We then move forward to the computation of  these invariants when $U$ is a contact ball $B_R\x\mathds{S}^1$ using the explicit form of projectors constructed in the previous section. 

In \textbf{section \ref{proof}} we show that when $\pi r^2$ and $\pi R^2$ are greater than or equal to 1, one can select an appropriate integer $N$ which depends on both $r$ and $R$ such that the naturality of morphisms between the three invariants (i.e., $\C(B_R\x\mathds{S}^1)$ , $\C(B_r\x\mathds{S}^1)$ , $\C(\R^{2n}\x\mathds{S}^1)$) obtained from the presumed squeezability is violated by the graded algebra structure of the Yoneda product $Ext_{\K[\sfrac{\Z}{N\Z}]}^\bu(\K;\K)$ after we pass to the equivariant derived category $D^+_{\sfrac{\Z}{N\Z}}(pt)$. This destroys the existence of contact squeezing and proves the main theorem (\textbf{Theorem \ref{main}}).

Throughout this paper we are considering not only bounded derived categories of sheaves but also locally bounded ones. By locally bounded sheaf $\F$ we mean for any relative compact open subset $V$, the pull-back sheaf $\F|_V$ is cohomologically bounded and we are working in this full subcategory. The notion of micro-support is adopted. Also there is no \textsl{a priori} notion of derived hom-functor for general triangulated categories, but in our paper all categories are obtained by localizations of derived categories of sheaves. So all $Rhom$-functors we implement here are inherited from the structure of cochain complexes. Due to the unboundedness issue it may be better to bear in mind the frameworks of  the homotopy category of cochain complexes, differential graded categories and model structures. All tensor products $\otimes$, as well as external products $\boxtimes$, stand for left derived functors of usual tensor products of sheaves. 

For more information about the interaction between sheaf theory and symplectic topology we would like to refer the reader to the excellent expository lecture by C. Viterbo \cite{Vi} which can be found on his website. It also contains many useful discussions about generating function apparatus and their application to symplectic geometry. It is Sandon who \cite{Sa} uses generating function techniques to define contact homology, by extending Traynor's symplectic homology \cite{Tr}, for open domains in $\R^{2n}\x\mathds{S}^1$ and gives a new proof of non-squeezability result of Eliashberg-Kim-Polterovich (\textbf{Theorem \ref{gap}}). It turns out that for open balls their symplectic/contact homology happen to coincide with our construction when forgetting $\sfrac{\Z}{N\Z}$-action. It will be very interesting if this coincidence holds for more general domains. 

Finally we need to mention that during the preparation of this paper, the author is told by Eliashberg that Fraser \cite{Fr} also proves the same non-squeezing theorem. In fact she has two proofs, one based on generating function techniques and another in the original context of \cite{EKP}. All known  approaches are using some sort of equivariant constructions inspired by Tamarkin several years ago. \vspace{0.5cm}

\subsection{Acknowledgement}\ \vspace{0.5cm}

I was told about this project by my thesis advisor, Dima Tamarkin and would like to thank deeply for his guidance and generosity. It is Dima who shares with me many profound ideas towards the result and useful techniques in microlocal sheaf theory. Also I need to express my gratitude to Nicolas Vichery and St\'{e}phane Guillermou for carefully reading the draft and pointing out several mistakes. I also benefit a lot from series of the work of Margherita Sandon. I would like to thank Eric Rubiel Dolores Cuenca and Ann Rebecca Wei for considerable discussions during my graduate studies. I am also grateful to Leonid Polterovich, Yasha Eliashberg and Maia Fraser for their helpful comments and suggestions to a list of essentially related works in this beautiful area. I would also like to thank Dusa McDuff, Andrea D'Agnolo, Pierre Schapira and Claude Viterbo for their encouragements. This paper will form my Ph.D. thesis at Northwestern University.\\

\section{The Contact Manifold $\R^{2n}\x \mathds{S}^1$ }\label{manifold}

\vspace{0.6cm}

\subsection{Contact Structure as Conic Symplectic Structure}\ \\

Let $\R^{2n}\x\R=\R^n\x\R^n\x\R=\{(\mathbf{q,p},z)|\mathbf{q,p}\in\R^n,z\in\R\} $ be a contact manifold with the standard contact form $\alpha=\mathbf{p}d\mathbf{q}+dz$. Note that the choice of 1-form $\alpha$ in this paper is different from $dz+\f{1}{2}(\q d\mathbf{p}-\mathbf{p} d\q)$ in \cite{EKP}. These two 1-forms are contactomorphic to each other. The advantage of choosing $\alpha$ is to turn  
the contact manifold $\R^{2n}\x\R$ into the $\R_{>0}$-quotient of its symplectization $Y$ where
$$
Y=T^*\R^n\x T^*_{>0}\R=\{(\mathbf{q,p},z,\zeta)|(\mathbf{q,p})\in T^*\R^n,(z,\zeta)\in T^*\R,\zeta>0   \}
$$
and the natural $\R_{>0}$-action given by $F_\lambda(\mathbf{q,p},z,\zeta)=(\mathbf{q},\lambda \mathbf{p},z,\lambda\zeta)$.

The manifold $Y$ is a symplectic submanifold of the cotangent bundle with the standard symplectic form $\omega=d\mathbf{p}\wedge d\mathbf{q}+d\zeta\wedge dz$, and it is easy to see that $\omega$ satisfies the relation $F_\lambda ^*\omega=\lambda\omega$ which gives rise to the conic symplectic structure of $Y$. Moreover, the infinitesimal $\R_{>0}$-action is given by the radial vector field $\p_\lambda=\mathbf{p}\p_{\mathbf{p}}+\zeta\p_\zeta$, and we can see that the 1-form $\alpha':=\iota_{\p_\lambda}\omega=\mathbf{p}d\mathbf{q}+\zeta dz$ is the pull-back of the standard contact form $\alpha$ of $\R^{2n}\x\R$ via the quotient map $Y\rightarrow\R^{2n}\x\R$ where $(\mathbf{q,p},z,\zeta)$ is sent to $(\mathbf{q,p}/\zeta,z)$.\\

\subsection{Lifting Contact Isotopy}\ \\

\begin{defn}
A map $\Phi:\R^{2n}\x \mathds{S}^1\rightarrow\R^{2n}\x \mathds{S}^1$ is called a Hamiltonian contactomorphism if there exists a contact isotopy $\{\Phi_s\}_{s\in I}: I\x\R^{2n}\x \mathds{S}^1\rightarrow\R^{2n}\x \mathds{S}^1$  such that each $\Phi_s$ is a contactomorphism and $\Phi_0=Id$, $\Phi_1=\Phi$. Here $I$ is an interval containing $[0,1]$. Furthermore, if there is a compact set $K$ such that each $\Phi_s$ is identity outside $K$, we call $\Phi$ is a Hamiltonian contactomorphism with compact support.
\end{defn}

Since the circle $\mathds{S}^1$ is the $\Z$-quotient of the real line $\R$ by the shifting action $\mathbf{T}: z\mapsto z+1$, this isotopy can be lifted to $\{\Phi_s\}_{s\in I} : I\x\R^{2n}\x\R\rightarrow\R^{2n}\x\R$ satisfying the following $\Z$-equivarient condition: $\Phi_s\circ\mathbf{T}=\mathbf{T}\circ\Phi_s$. 

  We can then lift $\{\Phi_s\}_{s\in I}$ again to obtain the conic  Hamiltonian symplectic isotopy $\{\Phi_s\}_{s\in I}:I\x Y\rightarrow Y$, satisfying $\R_{>0}$-homogeneity condition $\Phi_s\circ F_\lambda=F_\lambda\circ\Phi_s$ and $\Z$-equivarient condition $\Phi_s\circ\mathbf{T}=\mathbf{T}\circ\Phi_s$. By setting $X=\R^n\x\R=\{(\mathbf{q},z)| \mathbf{q}\in\R^n,z\in\R\}$ we have a conic $\Z$-equivarient symplectic isotopy $\Phi_s : Y=T^*_{\zeta>0}X\rightarrow T^*_{\zeta>0}X=Y$ for $s\in I$.
  
  Let $Y^\mathbf{a}$ denote the antipode symplectic manifold $(T^*_{\zeta<0}X,\omega)$ and $Y^\mathbf{a}\x Y$  the symplectic manifold equipped with symplectic form $\omega\oplus\omega$. For each individual $s$, we can construct a Lagragian submanifold $\Lambda_s$ by the graph of $\Phi_s$:
$$
\Lambda_s=\{(-y,\Phi_s(y))|y\in Y\}\subset Y^\mathbf{a}\x Y.
$$

The symplectic manifold  $Y^\mathbf{a}\x Y$ itself is the cotangent bundle $T^*_{\zeta_1<0,\zeta_2>0}(X\x X)$. The \textsl{sheaf quantization} of symplectomorphism $\Phi_s$ is by definition an object $\mathscr{S}_s$ of $D(X\x X)$ such that $SS(\mathscr{S}_s)\subset\Lambda_s$. 

When it comes to the Hamiltonian isotopy $\{\Phi_s\}_{s\in I}$, it will be more natural to keep track of the whole family $\{\Lambda_s\}_{s\in I}$ consisted of all Lagrangian graphs as $s$ varies throughout $I$. In fact we can lift this family to a single Lagrangian submanifold of the symplectic manifold $T^*I\x Y^\mathbf{a}\x Y$. Namely, let $h_s$ be the $s$-dependent Hamiltonian function associated to $\Phi_s$, consider the overall graph 
\begin{equation}\label{overall}
\Lambda = \{(s,-h_s(\Phi_s(y)),y,\Phi_s(y)) | s\in I,y\in Y\} \subset T^*I\x Y^\mathbf{a}\x Y.
\end{equation}

This overall graph $\Lambda$ is a conic Lagrangian submanifold of  $T^*I \x Y^\mathbf{a}\x Y\subset T^*(I\x X\x X)$. Moreover, any inclusion $\{s\}\x X\x X\hookrightarrow I\x X\x X$ is non-characteristic for $\Lambda$, so that any individual graph $\Lambda_s$ of $\Phi_s$ can be reconstructed from $\Lambda$ via symplectic reduction with respect to $T^*_s I$. In \textbf{Section \ref{invariant}} we will state a key theorem (\textbf{Theorem} \ref{quantization}) established in \texttt{\cite[Thm.16.3]{Gu}} that there exists  an essentially unique object $\mathscr{S}\in D(I\x X\x X)$ whose micro-support $SS(\mathscr{S})$ coincides the overall Lagrangian graph $\Lambda$ outside the zero-section and its pull-back to the initial time is isomorphic to the constant sheaf supported on the diagonal of $X$. This kind of sheaf $\mathscr{S}$ is called the \textsl{sheaf quantization} of $\{\Phi_s\}$ in \texttt{\cite{Gu}\cite{GuKaSch}}, in the sense that composing objects with kernel $\mathscr{S}$ gives rise to a family of  endofunctors on the category of sheaves that transform their micro-supports along with the given Hamiltonian isotopy $\{\Phi_s\}$. In fact, by further kernel composition with the dual of $\mathscr{S}$ we can construct sheaf theoretic contact isotopy invariants which take the shape of micro-supports into account (see \textbf{Proposition \ref{conj}}).\\

\subsection{Triangulated Categories Associated to Contact Balls}\label{tricat}\ \\

For any positive number $R$, denote the open ball of radius $R$ in $\R^{2n}$ by $B_R=\{(\mathbf{q,p})|\mathbf{q}^2+\mathbf{p}^2<R^2\}$. Let $U_R=B_R\x \mathds{S}^1$ be the contact ball in $\R^{2n}\x\R$. We can lift the contact ball $U_R$ to $\widetilde{U_R}=B_R\x\R$ in the covering contact manifold $(\R^{2n}\x\R,\alpha)$. Define the \textsl{conification} of $\widetilde{U_R}$ in $T^*_{\zeta>0}X$ to be the inverse image of the homogeneous quotient map of $\widetilde{U_R}$,
$$
C_R:=Cone(\widetilde{U_R})=\{(\mathbf{q,p},z,\zeta)|\zeta>0,(\mathbf{q,p}/\zeta,z)\in \widetilde{U_R} \}\subset T^*_{\zeta>0}X.
$$

We want to describe the category of objects micro-supported on this ball in the following sense:  Let $E:=\{\mathbf{q}\in\R^{n}\}$ and $X=E\x\R$. Fix a ground field $\K$, let $D(X)=D(E\x\R)$ denote the derived category of sheaves of vector spaces over $\K$ on $X$. Let $D_{\leq 0}(X)$ be the full subcategory of $D(X)$, consisting of all sheaves whose microlocal singular support is within $\{\zeta\leq 0\}$. We define a triangulated category $\D(X)$ to be the quotient \cite{Ta}
$$
\D(X)=\D(E\x\R):=D(X)/{D_{\leq 0}(X)}.
$$ 

  Let $D_{Y\setminus C_R}(X)\subset \D(X)$ be the full subcategory of all objects micro-supported on the conic closed set $Y\setminus  C_R$. We will see that the embedding functor $D_{Y\setminus C_R}(X)\hookrightarrow\D(X)$ has a left adjoint functor, hence it makes sense to write
    
\begin{defn}\label{DC}  
Let $\mathcal{D}_{C_R}(X):=\D(X)/D_{Y\setminus C_R}(X)$ be the left semi-orthogonal complement of $D_{Y\setminus C_R}(X)$.
\end{defn}  
  
  In \textbf{Section \ref{contactsupp}} we will see moreover this left adjoint functor $\D(X)\rightarrow D_{Y\setminus C_R}(X)$ is represented by a composition kernel $\mathscr{Q}_R\in D(X\x X)$. Similarly, the projection functor $\D(X)\rightarrow\mathcal{D}_{C_R}(X)$ is given by another composition kernel $\PR\in D(X\x X)$. Let $\widetilde{\Delta}=\{\q_1=\q_2,z_1\geq z_2\}$ be the subset of $X\x X$, then the constant sheaf $\K_{\widetilde{\Delta}}$ becomes the composition kernel of the identity endofunctor of $\D(X)$.
  
  The semi-orthogonal decomposition of $\D(X)$ is given by the fact that these three kernels fit into the following distinguished triangle in $D(X\x X)$:
 $$
 \PR\rightarrow\K_{\widetilde{\Delta}}\rightarrow\mathscr{Q}_R\xrightarrow{+1}  .
 $$

\vspace{1cm}

\section{Construction of the Projector}\label{projector} 
\vspace{1cm}

\subsection{Basics of Convolution and Composition}\ \\

Let $V$ be a vector space and $M_1,M_2,M_3$ be manifolds. We now define a \textsl{convolution functor}:
$$
D(M_1\x M_2\x V)\x D(M_2\x M_3\x V)\xrightarrow{\bu}D(M_1\x M_3\x V)
$$
as follows. Let
$$
p_{ij}:M_1\x M_2\x M_3\x V\x V\rightarrow M_i\x M_j\x V
$$
be the maps given by $p_{12}(m_1,m_2,m_3,v_1,v_2)=(m_1,m_2,v_1)$ , $p_{23}(m_1,m_2,m_3,v_1,v_2)=(m_2,m_3,v_2)$, and $p_{13}(m_1,m_2,m_3,v_1,v_2)=(m_1,m_3,v_1+v_2)$. Notice that $p_{13}$ takes sum of the vector part. 

\begin{defn}\label{convolution}

For objects $\F\in D(M_1\x M_2\x V)$ and $\G\in D(M_2\x M_3\x V)$ we define their convolution product by 
$$
\F\U{M_2}{\bu}\G:= Rp_{13!}(p_{12}^{-1}\F\otimes p_{23}^{-1}\G)\in D(M_1\x M_3\x V).
$$

\end{defn}
\vspace{0.2in}

For objects $\F\in D(V)$ and $\G\in D(M\x V)$, we identify $V$ with $\mathbf{pt}\x\mathbf{pt}\x V$ and $M\x V$ with $\mathbf{pt}\x M\x V$ or $M\x\mathbf{pt}\x V$. Then we can define a commutative operation
$$
\F\U{V}{\ast} \G := \F\U{\mathbf{pt}}{\bu}\G\cong\G\U{\mathbf{pt}}{\bu}\F\in D(M\x V).
$$

The following proposition  allows us to work with  categories of type $\mathcal{D}$. This symbol stands for left semi-orthogonal pieces.
\begin{prop}\label{*}(Proposition 2.2 of {\cite{Ta}})
Take $V=\R$ and $\F\in D(M\x\R)$ and let $\K_{[0,\infty)}\rightarrow\K_0$ be the restriction morphism in $D(\R)$. Then $\F\in\D(M\x\R)$ if and only if the natural morphism $\F\ast\K_{[0,\infty)}\rightarrow\F\ast\K_0\simeq\F$ is an isomorphism.
\end{prop}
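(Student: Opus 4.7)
The plan is to exhibit $-\ast\K_{[0,\infty)}$ as a projector realising the reflection of $D(M\x\R)$ onto $\D(M\x\R)$, by unwinding a single distinguished triangle and then reading off both directions of the iff from the microsupport calculus.

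First I would extract, from the short exact sequence $0\to\K_{(0,\infty)}\to\K_{[0,\infty)}\to\K_0\to 0$ of sheaves on $\R$, the distinguished triangle $\K_{(0,\infty)}\to\K_{[0,\infty)}\to\K_0\xrightarrow{+1}$ in $D(\R)$. Convolving in the $\R$-factor with $\F\in D(M\x\R)$, and using that $\K_0$ is the unit for $\ast$ (an immediate fibrewise check: the convolution fibre over $t$ reduces to the single point $(t,0)$), produces the distinguished triangle
\[
\F\ast\K_{(0,\infty)} \to \F\ast\K_{[0,\infty)} \to \F \xrightarrow{+1}
\]
in $D(M\x\R)$. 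Establishing the iff will amount to controlling the leftmost term.

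The key microlocal input is the inclusion $SS(\K_{(0,\infty)})\subset\{k\leq 0\}$, obtained directly from the definition of $SS$ applied to the constant sheaf on the open convex half-line $(0,\infty)$: away from $0$ one only sees the zero section, and at the boundary point $0$ only the outward conormal direction $-dt$ contributes. The microsupport estimate for proper pushforward and external tensor product (as listed in the Appendix) preserves the $k$-coordinate through the convolution formula $\F\ast\G=Rp_{13!}(p_{12}^{-1}\F\otimes p_{23}^{-1}\G)$, so $SS(\F\ast\K_{(0,\infty)})\subset\{k\leq 0\}$, i.e., the leftmost term lies in $D_{\{k\leq 0\}}(M\x\R)$. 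Hence the cone of the canonical morphism $\F\ast\K_{[0,\infty)}\to\F$ is killed on passage to the quotient $\D$.

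The parallel SS computation gives $SS(\K_{[0,\infty)})\subset\{k\geq 0\}$, so $\F\ast\K_{[0,\infty)}$ has microsupport in $\{k\geq 0\}$; combined with the idempotency $\K_{[0,\infty)}\ast\K_{[0,\infty)}\simeq\K_{[0,\infty)}$ (an immediate fibrewise check on the compact triangle $\{(x,y):x,y\geq 0,\ x+y=t\}$), this places $\F\ast\K_{[0,\infty)}$ in the left semi-orthogonal complement of $D_{\{k\leq 0\}}$ for every $\F$. So $-\ast\K_{[0,\infty)}$ is exactly the reflection onto $\D(M\x\R)$, and the canonical morphism $\F\ast\K_{[0,\infty)}\to\F$ is an isomorphism if and only if $\F$ coincides with its own reflection, which is precisely the condition $\F\in\D(M\x\R)$. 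The one point demanding care is pinning down the sign in $SS(\K_{(0,\infty)})\subset\{k\leq 0\}$ under the paper's convention and tracking it consistently through the convolution SS bound; once that is in place the iff reduces to the long exact sequence of the triangle together with the semi-orthogonal decomposition formalism recalled in the introduction.
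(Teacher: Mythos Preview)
The paper does not supply its own proof of this proposition; it simply quotes Tamarkin.  So your attempt must be judged on its own.

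Your overall plan---convolve the triangle $\K_{(0,\infty)}\to\K_{[0,\infty)}\to\K_0$ with $\F$ and exhibit this as the semi-orthogonal decomposition---is exactly the right strategy.  The microsupport estimates $SS(\F\ast\K_{(0,\infty)})\subset\{k\le0\}$ and $SS(\F\ast\K_{[0,\infty)})\subset\{k\ge0\}$ are correct, though note that the addition map in the convolution is \emph{not} proper on the relevant supports, so you cannot literally invoke the proper-pushforward bound; you need Proposition~\ref{key} (the non-proper projection estimate) instead, and then check that the closure in that statement does not enlarge the set.

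The genuine gap is the sentence ``$SS(\F\ast\K_{[0,\infty)})\subset\{k\ge0\}$, combined with idempotency, places $\F\ast\K_{[0,\infty)}$ in the left semi-orthogonal complement of $D_{\{k\le0\}}$.''  This does not follow.  Having $SS\subset\{k\ge0\}$ does \emph{not} imply membership in ${}^{\perp}D_{\{k\le0\}}$: the constant sheaf $\K_{M\times\R}$ has $SS$ in the zero section (so in $\{k\ge0\}$), is fixed by $-\ast\K_{[0,\infty)}$, yet lies in $D_{\{k\le0\}}$ itself and has nonzero $\mathrm{Hom}$ to it.  Idempotency tells you the image of $P=(-)\ast\K_{[0,\infty)}$ equals its fixed points, but says nothing about orthogonality to $D_{\{k\le0\}}$.

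What is missing is the microlocal cut-off lemma (Kashiwara--Schapira, Proposition~5.2.3): if $SS(\G)\subset\{k\le0\}$ then $\G\ast\K_{[0,\infty)}\cong0$.  This is not a formal consequence of the SS bounds you have; it is a genuine propagation statement.  With it in hand one knows $P$ annihilates $D_{\{k\le0\}}$, and then a short adjunction argument (the right adjoint of $-\ast\K_{[0,\infty)}$ likewise annihilates $D_{\{k\le0\}}$) gives $\mathrm{Hom}(P\F,\G)=0$ for all $\G\in D_{\{k\le0\}}$.  Only then is the triangle $\F\ast\K_{(0,\infty)}\to\F\ast\K_{[0,\infty)}\to\F$ the semi-orthogonal decomposition, from which both directions of the iff follow.
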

\begin{cor}
Let $\F\in\D(M_1\x M_2\x\R)$ and $\G\in D(M_2\x M_3\x\R)$. We then have $\F\U{M_2}{\bu}\G\in\D(M_1\x M_3\x\R)$.
\end{cor}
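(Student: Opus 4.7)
The plan is to reduce the statement to Proposition \ref{*}. By that proposition, membership in $\D(M_1\x M_3\x\R)$ is characterized by the natural morphism
\[
(\F\U{M_2}{\bu}\G)\ast\K_{[0,\infty)}\longrightarrow (\F\U{M_2}{\bu}\G)\ast\K_0\simeq \F\U{M_2}{\bu}\G
\]
being an isomorphism. So it suffices to show that the $M_2$-convolution commutes with the $\R$-convolution against $\K_{[0,\infty)}$, i.e.\ there is a natural isomorphism
\[
(\F\U{M_2}{\bu}\G)\ast\K_{[0,\infty)}\;\cong\;(\F\ast\K_{[0,\infty)})\U{M_2}{\bu}\G.
\]
Granting this, the right-hand side becomes $\F\U{M_2}{\bu}\G$ using the hypothesis $\F\in\D(M_1\x M_2\x\R)$ together with Proposition \ref{*} applied to $\F$ itself, and functoriality of $\U{M_2}{\bu}(\;\cdot\;)$ in the first argument.

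To establish the commutation lemma, I would unfold both sides on the common space $Z:=M_1\x M_2\x M_3\x\R\x\R\x\R$ with coordinates $(m_1,m_2,m_3,v_1,v_2,v_3)$, where $v_1,v_2$ are the $\R$-arguments of $\F$ and $\G$ after pullback and $v_3$ is the extra $\R$-argument of $\K_{[0,\infty)}$. Both convolutions are of the shape $Rp_!\bigl(p_{12}^{-1}\F\otimes p_{23}^{-1}\G\otimes q^{-1}\K_{[0,\infty)}\bigr)$ where the total pushforward sends the triple of $\R$-coordinates to their sum. The two parenthesizations $(v_1+v_2)+v_3$ and $v_1+(v_2+v_3)$ agree by associativity of addition, so both expressions arise from factoring the same proper map $Z\to M_1\x M_3\x\R$ through two different intermediate quotients. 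The identification of the two iterated pushforwards is then a standard base-change plus projection-formula argument (using that all the involved projections are smooth so pullbacks commute with tensor products, and that the summation maps on $\R$-factors are proper on the support of $\K_{[0,\infty)}$ in each variable once one is fixed, yielding the required compatibility of $Rp_!$ with itself).

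The main obstacle is not conceptual but bookkeeping: one has to check that the natural morphism produced by Proposition \ref{*} on the outer convolution coincides, under the associativity isomorphism just described, with the morphism induced from $\F\ast\K_{[0,\infty)}\to \F$ on the inner factor. This compatibility is formal once one writes every arrow as the counit of the appropriate adjunction $(p_{!},p^{-1})$ or as the restriction $\K_{[0,\infty)}\to\K_0$ pulled back and pushed down through the Cartesian diagram expressing associativity. No further hypothesis on $\G$ is needed, since only the $\R$-direction enters the criterion of Proposition \ref{*}. This yields the desired conclusion $\F\U{M_2}{\bu}\G\in\D(M_1\x M_3\x\R)$.
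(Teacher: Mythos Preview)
Your approach is correct and is exactly what the paper intends: the corollary is stated without proof immediately after Proposition~\ref{*}, and the implicit argument is precisely the associativity $(\F\U{M_2}{\bu}\G)\ast\K_{[0,\infty)}\cong(\F\ast\K_{[0,\infty)})\U{M_2}{\bu}\G$ combined with Proposition~\ref{*} applied to $\F$. One small quibble: your aside about properness of the summation maps is unnecessary and slightly misleading, since the convolution is defined via $Rp_!$ throughout and no properness is needed for the base-change/projection-formula manipulations you invoke.
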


Now let us introduce the concept of \textsl{Fourier transform} :

\begin{defn}\label{Fourier}
Let $E$ be a real vector space of dimension $n$. The Fourier transform with respect to $E=\{\mathbf{q}\}$ is a functor $\D(E\x\R)\xrightarrow{\wedge}\D(E^\star\x\R)$ given by
$$
\F\overset{\wedge\,}{\mapsto}\widehat{\F}:=\F\U{\mathbf{q}}{\bu}\K_{\{t+\mathbf{p}\mathbf{q}\geq0\}}[n].
$$
\end{defn}

The Fourier transform functor has the following effect on the micro-supports. Let $E^\star\x E \xrightarrow{\wedge} E\x E^\star$ be given by $(\mathbf{p},\mathbf{q})\xrightarrow{\wedge}(-\mathbf{q},\mathbf{p})$.

\begin{thm}\label{FourierSS}(Theorem 3.6 of \cite{Ta})
$SS(\widehat{\F})=\widehat{SS(\F)}$ up to a subset of $\{k\leq0\}$.
\end{thm}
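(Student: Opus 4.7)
The approach is to combine an explicit computation of $SS(\mathcal{K})$ for the Fourier kernel $\mathcal{K} := \K_{\{t + \mathbf{pq} \geq 0\}}[n]$ with Kashiwara--Schapira's functorial micro-support estimates for the defining convolution, and then to invoke Fourier inversion in $\mathcal{D}$ to upgrade inclusion to equality.

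Since $\mathcal{K}$ is a shift of the constant sheaf on the closed half-space cut out by the smooth function $f(\mathbf{q}, \mathbf{p}, t) = t + \mathbf{pq}$ with nowhere-vanishing differential $df = dt + \mathbf{p}\,d\mathbf{q} + \mathbf{q}\,d\mathbf{p}$, Example~5.3.2 of \cite{KaSch} gives
$$
SS(\mathcal{K}) \;=\; T^*_{\{f \geq 0\}}(E \times E^\star \times \R) \;\cup\; \bigl\{(\mathbf{q}, \mathbf{p}, t;\, \lambda \mathbf{p},\, \lambda \mathbf{q},\, \lambda)\,:\, f = 0,\ \lambda \geq 0\bigr\},
$$
a conic Lagrangian correspondence whose graph piece on $\{\lambda > 0\}$ realizes exactly the hat involution of the theorem statement. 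Unwinding the convolution and feeding $SS(\F)$ and $SS(\mathcal{K})$ through the standard estimates $SS(p_{12}^{-1}\F) \subset p_{12d}\,p_{12\pi}^{-1}\,SS(\F)$, $SS(A \otimes B) \subset SS(A) + SS(B)$, and $SS(Rp_{13!}\,\mathcal{H}) \subset p_{13\pi}\,p_{13d}^{-1}\,SS(\mathcal{H})$, a covector-level bookkeeping shows that every $(\mathbf{p}, t;\, p^\vee, k) \in SS(\widehat{\F})$ with $k > 0$ descends from $(p^\vee/k,\, t + \mathbf{p}\cdot p^\vee/k;\, -k\mathbf{p},\, k) \in SS(\F)$, which is precisely $\widehat{SS(\F)}$ on $\{k > 0\}$, giving the inclusion $SS(\widehat{\F}) \subset \widehat{SS(\F)}$ modulo $\{k \leq 0\}$. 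For the reverse inclusion I would invoke Fourier inversion: a direct associativity-of-convolution computation identifies $\mathcal{K}$ as invertible in $\mathcal{D}(E \times E^\star \times \R_t)$ with inverse (up to a shift and the antipode $\mathbf{q} \mapsto -\mathbf{q}$) given by the conjugate kernel $\K_{\{t - \mathbf{pq} \geq 0\}}[n]$, so $\widehat{\widehat{\F}}$ is isomorphic in $\mathcal{D}$ to the antipodal pullback of $\F$; applying the forward inclusion to $\widehat{\F}$ then gives $SS(\widehat{\widehat{\F}}) \subset \widehat{SS(\widehat{\F})}$, which unwinds to the desired $\widehat{SS(\F)} \subset SS(\widehat{\F})$ modulo $\{k \leq 0\}$.

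The hardest ingredient is Fourier inversion: the Kashiwara--Schapira estimates only ever produce inclusions, so equality has to come from the involutivity $\widehat{\widehat{\,\cdot\,}} \simeq \mathrm{id}$ in $\mathcal{D}$, which in turn reduces to computing $\mathcal{K} \bullet \K_{\{t - \mathbf{pq} \geq 0\}}[n]$ and identifying the result with $\K_{\widetilde{\Delta}}[2n]$ (up to shifts) in $\mathcal{D}$. Residual pieces with SS in $\{k \leq 0\}$ are killed by Tamarkin's localization, and this is precisely why the final equality only holds modulo $\{k \leq 0\}$. A subsidiary technicality is that $Rp_{13!}$ is not genuinely proper on the supports in play, so one must either work in the locally bounded derived category or restrict a priori to sheaves with $t$-bounded-below support---both conventions already embedded in Tamarkin's framework.
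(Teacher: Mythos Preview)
The paper does not give its own proof of this statement: it is quoted verbatim as Theorem~3.6 of \cite{Ta} and used as a black box. Your outline is correct and is essentially Tamarkin's argument in \cite{Ta}: compute $SS$ of the Fourier kernel as the conormal of the half-space $\{t+\mathbf{pq}\geq 0\}$, feed it through the Kashiwara--Schapira estimates for the convolution to obtain the inclusion $SS(\widehat{\F})\subset\widehat{SS(\F)}$ on $\{k>0\}$, and then invoke Fourier inversion in $\D$ (i.e.\ $\widehat{\widehat{\F}}\cong a^{-1}\F$ for the antipode $a$) to promote the inclusion to an equality modulo $\{k\leq 0\}$. Your identification of Fourier inversion as the crux, and of the localization to $\D$ as what absorbs the residual $\{k\leq 0\}$ pieces, is exactly right.
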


 On the other hand, we have another operation called \textsl{kernel composition}. In contrast to the presence of vector summation in convolution, the notion of composition only involves projection maps of product manifolds. Let $N_1,N_2,N_3$ be manifolds accompanied with the projection maps $\pi_{i,j}:N_1\x N_2\x N_3\rightarrow N_i\x N_j$ for $i<j$. We define the composition product by a functor
  $$
  D(N_1\x N_2)\x D(N_2\x N_3)\xrightarrow{\circ} D(N_1\x N_3).
  $$

\begin{defn}\label{composition}
Let $\F\in D(N_1\x N_2)$ and $\G\in D(N_2\x N_3)$ we set
$$
\F\U{N_2}{\circ}\G:=R\pi_{13!}(\pi_{12}^{-1}\F\otimes \pi_{23}^{-1}\G)\in D(N_1\x N_3).
$$
\end{defn}

Throughout the paper we will need to switch between the notions of convolution and composition, so it is worth saying a few more about their relation. Following \cite{GuSch}, assume that $\gamma$ is a convex closed proper cone in $V$ and we have a constant sheaf $\K_\gamma\in D(V)$. Let $\gamma_l=\{(v_1,v_2)|v_1-v_2\in\gamma\}$ and $\gamma_r=\{(v_1,v_2)|v_2-v_1\in\gamma\}$. We then have two constant sheaves $\K_{\gamma_l},\K_{\gamma_r}\in D(V_1\x V_2)$, one translates the convolution to left composition and another to right composition. 

\begin{prop}\label{conv-comp}
For any sheaf $\F\in D(M\x V)$, we have the following isomorphisms in $D(M\x V)$ : 
$\F\U{V}{\ast}\K_\gamma \cong \K_{\gamma_l}\U{V_2}{\circ}\F \cong\F\U{V_1}{\circ} \K_{\gamma_r}$.
\end{prop}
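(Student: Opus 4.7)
The plan is to exhibit both sides of each isomorphism as direct image functors on the common manifold $M \x V \x V$ and then intertwine them by an explicit linear change of coordinates. First I would unfold Definition~\ref{convolution} with $M_1 = M$ and $M_2 = M_3 = \mathbf{pt}$:
\[ \F \U{V}{\ast} \K_\gamma \;\cong\; Rp_{13!}\bigl(p_{12}^{-1}\F \otimes p_{23}^{-1}\K_\gamma\bigr), \]
with $p_{12}(m,v_1,v_2) = (m,v_1)$, $p_{23}(m,v_1,v_2) = v_2$, and $p_{13}(m,v_1,v_2) = (m,v_1+v_2)$. I would then unfold Definition~\ref{composition} (with $N_1=V_1$, $N_2=V_2$, $N_3=M$, and $D(V_2 \x M) = D(M \x V)$) into
\[ \K_{\gamma_l} \U{V_2}{\circ} \F \;\cong\; R\pi_{13!}\bigl(\pi_{12}^{-1}\K_{\gamma_l} \otimes \pi_{23}^{-1}\F\bigr), \]
where now on $M \x V \x V$ the maps are $\pi_{12}(m,w_1,w_2) = (w_1,w_2)$, $\pi_{23}(m,w_1,w_2) = (m,w_2)$, and $\pi_{13}(m,w_1,w_2) = (m,w_1)$.

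The main step is to produce the diffeomorphism
\[ \psi : M \x V \x V \to M \x V \x V, \qquad \psi(m,v_1,v_2) = (m,\,v_1+v_2,\,v_1), \]
with inverse $(m,w_1,w_2) \mapsto (m,w_2,w_1-w_2)$, and to check directly that $\pi_{13}\circ\psi = p_{13}$ while $\psi$-pushforward sends $p_{12}^{-1}\F \otimes p_{23}^{-1}\K_\gamma$ to $\pi_{23}^{-1}\F \otimes \pi_{12}^{-1}\K_{\gamma_l}$. The only non-tautological verification is that the constraint $v_2 \in \gamma$ in the source pulls back along $\psi^{-1}$ to the condition $w_1 - w_2 \in \gamma$, which is precisely the defining condition of $\gamma_l$. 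Since $\psi$ is a linear isomorphism and hence proper, $R\psi_! \cong \psi_*$ is an equivalence of categories, and the first claimed isomorphism follows from the chain $Rp_{13!} = R\pi_{13!} \circ R\psi_!$.

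For the second isomorphism $\F \U{V}{\ast} \K_\gamma \cong \F \U{V_1}{\circ} \K_{\gamma_r}$ the identical argument applies with the symmetric diffeomorphism $\psi'(m,v_1,v_2) = (m,\,v_1,\,v_1+v_2)$, under which $v_2 \in \gamma$ becomes $w_2 - w_1 \in \gamma$, i.e.\ the defining condition of $\gamma_r$. The only obstacle in carrying out the proof is organizational: keeping straight the three projection maps of the convolution setup against those of the composition setup, and matching each tensor factor with the correct pullback. No further microlocal input is required, since the underlying reason convolution and composition agree here is simply the group structure on $V$, which allows one to re-express the sum map $p_{13}$ as a coordinate projection after a linear change of variables.
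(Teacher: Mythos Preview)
Your argument is correct: unfolding both the convolution and composition formulas on the common space $M\times V\times V$ and intertwining them via the linear diffeomorphisms $\psi(m,v_1,v_2)=(m,v_1+v_2,v_1)$ and $\psi'(m,v_1,v_2)=(m,v_1,v_1+v_2)$ does exactly what is needed, and your identification of $\{v_2\in\gamma\}$ with $\gamma_l$ (respectively $\gamma_r$) after the change of variables is the only substantive check. The paper itself states this proposition without proof, deferring to \cite{GuSch}, so there is no alternative argument to compare against; your direct verification is the natural one and is complete as written.
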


The above relation can be understood in the sense of isomorphisms between endofunctors of $ D(M\x V)$. In the following subsections, we will describe how to characterize semi-orthogonal projection functors between certain triangulated categories defined by microlocal conditions in terms of convolution and composition.\\

\subsection{Quantizating Hamiltonian Actions}\label{action}\ \\ 

Recall that $X=\{(\mathbf{q},z)\}=\R^{n}\x\R=E\x\R$. Before we construct the kernel of the projector $\PR$ to $\mathcal{D}_{C_R}(X)$ as an object of $D(X\x X)$, we should first consider its  symplectic counterpart for open ball $B_R=\{\mathbf{q}^2+\mathbf{p}^2<R^2\}$ in $T^*E=\{(\mathbf{q,p})|\mathbf{q}\in E, \mathbf{p}\in E^{\star} \}$. By introducing a new real variable $\R=\{t\}$ and its cotangent coordinates $T^*\R=\{(t,k)\}$, we can define the notion of \textsl{sectional micro-supports} as follows:

\begin{defn}\label{sectional}
For object $\F\in\D(E\x\R)$  we define its sectional microlocal singular support $\mu S(\F)$ by the following subset of $T^*E$:
$$
\mu S(\F):=\{(\mathbf{q,p})\in T^*E | \exists t : \,(\mathbf{q,p},t,1)\in SS(\F)\}.
$$
\end{defn}

In other words, $\mu S(\F)$ is the projection of the section of the conic closed set $SS(\F)$ at the hyperplane $\{k=1\}$ on $T^*E$. Now let $D_{T^*E\setminus B}(E\x\R)\subset\D(E\x\R)$ be the full subcategory of all objects $\F$ whose sectional micro-supports $\mu S(\F)$ sit outside of the open symplectic ball $B_R$. To be specific, for any $\F\in D_{T^*E\setminus B}(E\x\R)\subset\D(E\x\R)$, we should have $\mu S(\F)\subset\{\mathbf{(q,p)}\in T^*E|\q^2+\mathbf{p}^2\geq R^2\}$.

Let $D_{T^*E\setminus B}(E\x\R)\hookrightarrow\D(E\x\R)$ be the embedding functor. We show later in \textbf{Theorem \ref{decomposition}} that this embedding functor has a left adjoint functor $\D(E\x\R)\rightarrow D_{T^*E\setminus B}(E\x\R)$ given by a convolution kernel $\mathcal{Q}_R\in D(E_1\x E_2\x\R)$. Put $\Delta=\{(\mathbf{q}_1,\mathbf{q}_2,t)| \mathbf{q}_1=\mathbf{q}_2,t\geq0\}\subset E_1\x E_2\x\R$. The constant sheaf $\K_\Delta\in D(E_1\x E_2\x\R)$ stands for the convolution kernel of the identity endofunctor. Notice that we are using the additive structure of $\R$ in convolution operation. Now consider the distinguished triangle in $D(E_1\x E_2\x\R)$:
$$
\pr\rightarrow\K_\Delta\rightarrow\mathcal{Q}_R\xrightarrow{+1}.
$$
\indent Denote the left semi-orthogonal complement of $D_{T^*E\setminus B}(E\x\R)$ in $\D(E\x\R)$ by $\mathcal{D}_{B}(E\x\R)$. \textbf{Theorem \ref{decomposition}} tells  that $\mathcal{D}_{B}(E\x\R)\cong\D(E\x\R)/D_{T^*E\setminus B}(E\x\R)$. One can call $\mathcal{D}_{B}(E\x\R)$ the category of objects micro-supported on the symplectic ball $B_R$. It turns out that the semi-orthogonal projection functor $\D(E\x\R)\rightarrow \mathcal{D}_{B}(E\x\R)$ is given by convolution with $\pr$.\\

Let us start with geometric properties of the shape of the ball: rotational symmetry. Let $H(\mathbf{q,p})=\q^2 + \mathbf{p}^2$ be the distinguished Hamiltonian function of classical harmonic oscillator, then $B_R=\{(\mathbf{q,p})\in T^*E|H(\mathbf{q,p})<R^2\}$. Consider the corresponding Hamiltonian equations:
$$
\f{d\mathbf{q}}{da}=\f{\p H}{\p \mathbf{p}}=2\mathbf{p}\quad\ ,  \quad\ \f{d\mathbf{p}}{da}=-\f{\p H}{\p \mathbf{q}}=-2\mathbf{q}.
$$
Its solution is given by Hamiltonian rotations $\mathds{H}:\R\rightarrow Sp(2n)$ such that for $a\in\R$,
$$
\left(\begin{array}{clr}
\mathbf{q}(a)\\
\mathbf{p}(a)
\end{array}\right)
=\mathds{H}(a)
\left(\begin{array}{clr}
\mathbf{q}(0)\\
\mathbf{p}(0)
\end{array}\right)=
\left(\begin{array}{clr}
\cos(2a) & \sin(2a)\\
-\sin(2a) & \cos(2a)
\end{array}\right)
\left(\begin{array}{clr}
\mathbf{q}(0)\\
\mathbf{p}(0)
\end{array}\right).
$$
\vspace{0.2cm}

We have a unique lifting  $\widetilde{\mathds{H}}:\R\rightarrow\widetilde{Sp}(2n)$ to the universal covering group of $Sp(2n)$ such that $\widetilde{\mathds{H}}(0)=e$. Denote the image of $\widetilde{\mathds{H}}$ by $G$. $G$ is isomorphic to $\R$. 

For time $a\in G$, put $(\mathbf{q}_1,\mathbf{p}_1)=(q(0),p(0))$ and $(\mathbf{q}_2,\mathbf{p}_2)=(\mathbf{q}(a),\mathbf{p}(a))$. Let us write down the change of variable formula given by the Hamiltonian rotation:  

\begin{equation}\label{subjectto}
\mathbf{q}_2=\f{\mathbf{q}_1+\sin(2a)\mathbf{p}_2}{\cos(2a)} \,,\, \mathbf{p}_1=\f{\sin(2a)\mathbf{q}_1+\mathbf{p}_2}{\cos(2a)}
\end{equation}

By \textsl{generating function} we mean a function $S(\mathbf{q}_1,\mathbf{q}_2)$ satisfying equations 
$$
\f{\p S}{\p \mathbf{q}_1}=-\mathbf{p}_1 \quad , \quad \f{\p S}{\p \mathbf{q}_2}=\mathbf{p}_2 .
$$

By combining them with the explicit expression of the rotation $\mathds{H}(a)$ the above equations allow a solution (locally depends on $a$) up to a constant:
\begin{equation}\label{GF}
S(\mathbf{q}_1,\mathbf{q}_2)=\f{\cos(2a)}{2\sin(2a)}(\mathbf{q}_1^2+\mathbf{q}_2^2)-\f{\mathbf{q}_1\mathbf{q}_2}{\sin(2a)}.
\end{equation}

Next, substitute the variable $\mathbf{q}_2$ by $\mathbf{q}_1\cos(2a)+\mathbf{p}_1\sin(2a)$ and write $\mathbf{q}_1=\mathbf{q}$ and $\mathbf{p}_1=\mathbf{p}$ in the function $S(\mathbf{q}_1,\mathbf{q}_2)$, we get the local expression $S(\mathbf{q,p})$ appearing in our decomposition. Indeed, the globally defined generating function can be interpreted by the following \textsl{action functional} (see \cite{Mc} )
$$
S=\int_\gamma \mathbf{p}d\mathbf{q}-Hda
$$ 
where the path $\gamma$ is the integral curve of Hamiltonian equations, parametrized by $a$,  starting from $(\mathbf{q}_1,\mathbf{p}_1)$ and ending at $(\mathbf{q}_2,\mathbf{p}_2)$. If the time interval is contained in $(-\f{\pi}{2},\f{\pi}{2})$ we have the unique integral curve from $a=0$. For time interval longer than this, we can divide it into subintervals so that each of them admits an unique integral curve, and take the sum of their integral values we obtain $S$. 

Let us write down the local form of $S$ in $\q,\mathbf{p}$:
$$
S_a\mathbf{(q,p)}=\f{\sin(4a)}{4}(\mathbf{p}^2-\mathbf{q}^2)-\sin^2(2a)\mathbf{pq}.
$$

The above variables are actually vectors in $\R^n$ and here by writing their products we mean the inner products, for example $\q_1\q_2:=\langle\q_1,\q_2\rangle$. Let $\alpha:=\mathbf{p}d\mathbf{q}$ be the Liouville 1-form on $T^*E$ and put $\eta=Hda$. Let $\mathcal{A}:G\x T^*E_1\rightarrow T^*E_2$ be the corresponding Hamiltonian action via $\widetilde{\mathds{H}}$. Taking differential of the integral interpretation $S=\int_\gamma \mathbf{p}d\mathbf{q}-Hda$ gives us the decomposition
$$
dS=\mathcal{A}^*\alpha-\mathbf{p}d\q-Hda=\mathcal{A}^*\alpha-\alpha-\eta .
$$

Then we have the following proposition: 

\begin{prop}
The function $G\x T^*E\xrightarrow{S}\R$ leads to the decomposition 
$$
\mathcal{A}^*\alpha=\alpha+\eta+dS.
$$ 
\end{prop}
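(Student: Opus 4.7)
The plan is to verify the identity of 1-forms on $G\times T^*E$ directly from the action-functional description of $S$ supplied immediately before the statement. Write
$$
S(a,\q_1,\mathbf{p}_1)=\int_0^a \bigl(\mathbf{p}(s)\,\dot{\q}(s)-H(\q(s),\mathbf{p}(s))\bigr)\,ds,
$$
where $(\q(s),\mathbf{p}(s))$ denotes the (complete) Hamilton trajectory with initial condition $(\q_1,\mathbf{p}_1)$. This integral is globally well-defined and smooth on $G\times T^*E$, even though the closed-form local expression (\ref{GF}) degenerates whenever $\sin(2a)=0$; so it suffices to prove the identity in this intrinsic formulation, and the local version follows automatically after substituting $\mathbf{p}_1=\mathbf{p}_1(\q_1,\q_2,a)$.

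The main step is to differentiate $S$ along an arbitrary variation $\epsilon\mapsto(a_\epsilon,\q_{1,\epsilon},\mathbf{p}_{1,\epsilon})$ using Hamilton's principle. The differential $dS$ splits into three pieces: (i) the direct term $L(a)\,da$ coming from perturbing the upper limit of integration, where $L=\mathbf{p}\dot{\q}-H$; (ii) the boundary terms $[\mathbf{p}\cdot\delta\q]_{s=0}^{s=a}$ produced by integration by parts; and (iii) interior integrals proportional to $\dot{\q}-\p_{\mathbf{p}}H$ and $\dot{\mathbf{p}}+\p_{\q}H$. Because the curve $(\q(s),\mathbf{p}(s))$ solves Hamilton's equations by construction, the interior terms in (iii) vanish, leaving only (i) and (ii).

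The last step is a chain-rule bookkeeping. Since $\q_2=\q(a;\q_1,\mathbf{p}_1)$, an outer variation satisfies $d\q_2=\delta\q|_{s=a}+\dot{\q}_2\,da$, so one can trade $\delta\q|_{s=a}$ for $d\q_2-\dot{\q}_2\,da$ in the boundary piece at $s=a$. Collecting everything and using $L-\mathbf{p}_2\dot{\q}_2=-H$ produces
$$
dS=\mathbf{p}_2\,d\q_2-\mathbf{p}_1\,d\q_1-H\,da.
$$
Recognizing $\mathcal{A}^*\alpha=\mathbf{p}_2\,d\q_2$ (viewed as a 1-form on $G\times T^*E_1$), $\alpha=\mathbf{p}_1\,d\q_1$ and $\eta=H\,da$, this is precisely $dS=\mathcal{A}^*\alpha-\alpha-\eta$, which rearranges to the asserted decomposition.

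The only mild obstacle is tracking the three independent variations $(\delta a,\delta\q_1,\delta\mathbf{p}_1)$ simultaneously and, in particular, appreciating that the $da$-component of $\mathcal{A}^*\alpha$ absorbs the $\dot{\q}_2$-contribution from the moving endpoint; no input beyond the classical derivation of a generating function for a Hamiltonian flow is required. As a sanity check, one can recover the local formula $S_a(\q,\mathbf{p})=\tfrac{\sin(4a)}{4}(\mathbf{p}^2-\q^2)-\sin^2(2a)\mathbf{p}\q$ by plugging the explicit rotation $\widetilde{\mathds{H}}(a)$ and the harmonic Hamiltonian $H=\q^2+\mathbf{p}^2$ into the integral, which confirms the formulation is consistent with equation (\ref{GF}).
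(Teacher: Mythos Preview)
Your proposal is correct and follows essentially the same route as the paper: the paper's justification is the single line ``taking differential of the integral interpretation $S=\int_\gamma \mathbf{p}\,d\mathbf{q}-H\,da$ gives us the decomposition $dS=\mathcal{A}^*\alpha-\alpha-\eta$,'' and you have simply filled in the standard variational-calculus details behind that sentence. Your additional remark about recovering the explicit local formula as a sanity check also mirrors the paper's closing comment that the identity can be verified explicitly.
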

It is also easy to verify that $\mathcal{A}^*\alpha-\alpha=(\mathbf{q}^2+\mathbf{p}^2)da+dS$ explicitly. \\

With functions $H:G\x T^*E\rightarrow T^*G$ and $S:G\x T^*E\rightarrow\R$ we can perform the following  Legendrian embedding:
\begin{equation}\label{actiongraph}
\mathds{A}:G\x T^*E\rightarrow T^*G\x T^*E_1\x T^*E_2\x\R
\end{equation}
$$
\qquad\qquad\quad(a,x)\mapsto (-\eta(a,x),x^\mathbf{a},\mathcal{A}_a(x),-S_a(x))
$$
here $x:=(\mathbf{q}_1,\mathbf{p}_1)$ and its \textsl{antipode} $x^\mathbf{a}:=(\q_1,-\mathbf{p}_1)$ are in the cotangent space $ T^*E_1$. The minus sign in 1-form $-\eta(a,x)$ stands for the element $(a,-H(x))$ of the cotangent space $T^*G$. From the above decomposition it is easy to see that the image of $\mathds{A}$ (still denoted by $\mathds{A}$) is a Legendrian submanifold in $T^*G\x T^*E_1\x T^*E_2\x\R_t$. Let $L$ be the conification of this Legendrian in $T^*G\x T^*E_1\x T^*E_2\x T^*\R_t$ by
$$
L=Cone(\mathds{A}):=\{(k\xi,t,k)|(\xi,t)\in\mathds{A}, k>0\}
$$
then $L$ becomes a conic Lagrangian submanifold in $T^*G\x T^*E_1\x T^*E_2\x T^*\R_t\cong T^*(G\x E_1\x E_2\x\R_t)$. 

\begin{prop}\label{rotation}
There exists an object $\s$ of $D(G\x E_1\x E_2\x\R)$ such that $\s|_{0\x E_1\x E_2\x\R}\cong\K_\Delta$ where $\Delta=\{(\mathbf{q}_1,\mathbf{q}_2,t)|\mathbf{q}_1=\mathbf{q}_2, t\geq 0\}$ and $SS(\s)\cap T^*_{k>0}(G\x E_1\x E_2\x\R) \subset L$. The object $\s$ is called sheaf quantization of Hamiltonian rotations.
\end{prop}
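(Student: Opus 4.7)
The plan is to invoke the Guillermou-Kashiwara-Schapira sheaf quantization theorem (to be recalled as Theorem \ref{quantization} in Section \ref{invariant}) applied to the conic Lagrangian $L$ just constructed. In its $\R_{>0}$-homogeneous form that theorem produces an essentially unique sheaf quantization of any complete conic Hamiltonian isotopy of a cotangent bundle, once one prescribes the value at the initial time. Here the isotopy is the harmonic rotation $\widetilde{\mathds{H}}(a)$ and the initial datum will be $\K_\Delta$.

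To fit the hypotheses I first recast $\mathcal{A}$ as a genuine conic homogeneous Hamiltonian flow on the symplectization. Introducing the extra coordinate $t$ with dual variable $k>0$, consider
$$
\widetilde{H}(\q,\mathbf{p},t,k):=kH(\q,\mathbf{p}/k)=k\q^2+\f{\mathbf{p}^2}{k}
$$
on the conic open subset $T^*_{k>0}(E\x\R_t)$. This function is smooth, homogeneous of degree $1$ in the fibre coordinates $(\mathbf{p},k)$, and its flow is complete because the harmonic oscillator on $T^*E$ admits global solutions for every $a\in\R$. The parametrised graph of this conic flow as $a$ ranges over $G$ is precisely $L$; this is the content of the decomposition $\mathcal{A}^*\alpha=\alpha+\eta+dS$ established just above, read in the conified picture.

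Theorem \ref{quantization} then provides, from the data of $L$ together with the initial datum $\K_\Delta$, an essentially unique $\s\in D(G\x E_1\x E_2\x\R)$ with $SS(\s)\cap T^*_{k>0}(G\x E_1\x E_2\x\R)\subset L$ and $\s|_{\{0\}\x E_1\x E_2\x\R}\cong\K_\Delta$. To legitimize the initial datum I would verify that $\K_\Delta$ is itself the sheaf quantization of the identity: a direct calculation of $SS(\K_\Delta)$ at the boundary stratum $\{\q_1=\q_2,\ t=0\}$ shows that its intersection with $T^*_{k>0}(E_1\x E_2\x\R)$ is the conic graph of the identity symplectomorphism, which matches the $a=0$ slice of $L$ obtained by symplectic reduction along $T^*_{0}G$.

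The main delicate point is the non-compactness of the harmonic oscillator flow on $T^*E$, combined with the caustic times $a\in\tfrac{\pi}{2}\Z$ at which the local generating function $S_a$ of \eqref{GF} has poles. These caustics do not obstruct existence; they merely prevent $\s$ from being realised globally by a single constant-sheaf kernel of the form $\K_{\{t+S_a(\q_1,\q_2)\geq 0\}}$. A hands-on alternative to Theorem \ref{quantization} is available: cover $G$ by the overlapping open intervals $I_n=(\tfrac{(n-1)\pi}{2},\tfrac{(n+1)\pi}{2})$, on each of which $S_a$ is smooth; write $\s_n$ on $I_n$ as the corresponding constant-sheaf kernel placed in the cohomological degree prescribed by the Maslov index along the path; then glue the local pieces by the convolution of Definition \ref{convolution}. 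Coherence on overlaps is forced by the essential uniqueness half of Theorem \ref{quantization}, which is also what will guarantee that the resulting $\s$ does not depend on the choice of cover.
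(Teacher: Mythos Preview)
Your overall strategy matches the paper's: both of you first observe that abstract existence and uniqueness follow from Guillermou--Kashiwara--Schapira, and the paper says exactly this in its opening sentence. The paper, however, then spends the bulk of the proof writing down an \emph{explicit} incarnation of $\s$, because the later computations (notably Lemma~\ref{fr} and the whole of Section~\ref{invariant}) depend on the concrete formula $\s=\K_{\{t+S_a(\q_1,\q_2)\geq 0\}}$ on $(0,\tfrac{\pi}{2})$, its boundary analysis at $a=0$ and $a=\tfrac{\pi}{2}$, and the convolution rule $\s|_{a_1+a_2}\cong\s|_{a_1}\underset{E}{\bu}\s|_{a_2}$ extending it to all of $G$. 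Your appeal to Theorem~\ref{quantization} alone would establish the proposition as stated but would leave you without the explicit kernel the rest of the paper needs.

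Two technical slips in your write-up are worth flagging. First, Theorem~\ref{quantization} as formulated in this paper is for \emph{compactly supported} contact isotopies of $\R^{2n}\times\mathds{S}^1$; the harmonic rotation is not of this type, so you should cite the original GKS result (Proposition~3.2 of \cite{GuKaSch}) directly rather than the paper's restricted restatement, just as the paper does. Second, in your hands-on alternative the intervals $I_n=(\tfrac{(n-1)\pi}{2},\tfrac{(n+1)\pi}{2})$ each contain a pole of $S_a$ at their midpoint $a=\tfrac{n\pi}{2}$, so $S_a$ is not smooth on them; the paper instead works on a single fundamental interval $(0,\tfrac{\pi}{2})$ and extends by the convolution semigroup property, which sidesteps any overlap-gluing and Maslov bookkeeping.
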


\begin{proof}

In fact, the general theory of the existence and the uniqueness of sheaf quantization of Hamiltonian symplectomorphisms has been well-established by  \textbf{Proposition 3.2} and \textbf{Lemma 3.3} in the paper \cite{GuKaSch}. What we are going to do is to look for an incarnation of $\s$ for the sake of  the computation of the contact invariant in the present paper.

For $0<a<\f{\pi}{2}$, let us first construct a constant sheaf in $\D((0,\f{\pi}{2})\x E_1\x E_2\x\R)$ by
$$
\s :=\K_{\{(a,\mathbf{q}_1,\mathbf{q}_2,t)|t+S_a(\mathbf{q}_1,\mathbf{q}_2)\geq 0 \}}.
$$

It is easy to see that we have
$$
SS(\s)=\{(kdS,k)|k>0 ,t=-S_a(\mathbf{q}_1,\mathbf{q}_2)\}\bigcup \{k=0\}.
$$

According to (\ref{actiongraph}) one has $SS(\s)\cap T^*_{k>0}\subset L$ .

Note that the function 
$$
S_a(\mathbf{q}_1,\mathbf{q}_2)=\f{\cos(2a)}{2\sin(2a)}(\mathbf{q}_1^2+\mathbf{q}_2^2)-\f{\mathbf{q}_1\mathbf{q}_2}{\sin(2a)}
$$ 
is undefined at $a=0$ and $a=\f{\pi}{2}$. To see the behavior as $a$ approaches to $0$ from the right, let us rewrite the generating function as follows:
$$
S_a(\mathbf{q}_1,\mathbf{q}_2)=\f{\tan(a)}{2}(\mathbf{q}_1^2+\mathbf{q}_2^2)+\f{1}{2\sin(2a)}(\mathbf{q}_1-\mathbf{q}_2)^2.
$$

For those $\mathbf{q}_1-\mathbf{q}_2\neq0$, the function $S_a$ goes to the positive infinity as $a\rightarrow0^+$. Hence the sheaf $\s$ has no support for $\R_t$. On the other hand, for those $\mathbf{q}_1-\mathbf{q}_2=0$, $S_a$ goes to zero when $a\rightarrow0^+$. In this case the support for $\R_t$-coordinate is $t\geq0$. Thus by joining $a=0$ to the domain of $\s$ we can define 
\begin{equation}\label{leftend}
\s|_{[0,\f{\pi}{2})\x E_1\x E_2\x\R}\cong\K_{\{0<a<\f{\pi}{2},t+S_a(\mathbf{q}_1,\mathbf{q}_2)\geq0\}\bigcup\{(0,\mathbf{q},\mathbf{q},t)|t\geq0\}}.
\end{equation}

Let $\Delta=\{\q_1=\q_2,t\geq0\}\subset E_1\x E_2 \x\R$, we see that $\s|_{a=0}\cong\K_{\Delta}$ is the convolution kernel of the identity functor of $\D(E\x\R)$.

On the other hand, to see the behavior of $\s$ when $a$ approaches to $\pi/2$, let us rewrite the generating function as the following form:
$$
S_a(\mathbf{q}_1,\mathbf{q}_2)=\f{\cos(2a)}{2\sin(2a)}(\mathbf{q}_1^2+\mathbf{q}_2^2) - \f{1}{\sin(2a)}\mathbf{q}_1\mathbf{q}_2=\f{\mathbf{q}_1^2+\mathbf{q}_2^2}{2\tan(a)}-\f{(\mathbf{q}_1+\mathbf{q}_2)^2}{2\sin(2a)}.
$$

For those $\mathbf{q}_1+\mathbf{q}_2\neq0$, when $a$ approaches to $\f{\pi}{2}$ from the left, the function $S_a(\mathbf{q}_1,\mathbf{q}_2)$ goes to the negative infinity, hence the sheaf is supported on the whole $\R_t$. And for $\mathbf{q}_1+\mathbf{q}_2=0$, $S_a(\mathbf{q}_1,\mathbf{q}_2)$ goes to zero when $a$ goes to $\f{\pi}{2}$.  Thus by joining $a=\f{\pi}{2}$ we define
\begin{equation}\label{rightend}
\s|_{(0,\pi/2]\x E_1\x E_2\x\R}\cong\K_{\{0<a<\f{\pi}{2} \, , \,t+S_a(\mathbf{q}_1,\mathbf{q}_2)\geq0\}\bigcup\{(\f{\pi}{2} ,\mathbf{q,-q},t)|t\geq0\}\bigcup\{a=\f{\pi}{2} ,\mathbf{q}_1+\mathbf{q}_2\neq 0\}}.
\end{equation}

This way one can even extend $\s$ to an object of $\D((-\f{\pi}{2},\f{\pi}{2}]\x E_1\x E_2\x\R)$. However, due to the periodicity of the function $S_a$ this construction cannot be extended naively outside the interval $(-\f{\pi}{2},\f{\pi}{2}]$ since there will be multiple Hamiltonian flow trajectories from $\mathbf{q}_1$ to $\mathbf{q}_2$. Instead, having all data lifted to the universal covering $G$, one convolutes local quantizations and constructs the global quantization $\s$.

Note that we need $\s$ to act on the category $\D(E\x\R)$ by convolution, so one expects  $\s|_{a_1+a_2}\cong \s|_{a_1}\U{E}{\bu} \s|_{a_2}$ for all $a_1,a_2$ in $G$. Indeed, let $a_1,a_2$ in $ [0,\f{\pi}{2})$ and write down their local expressions $\s|_{a_1}=\K_{\{(\q_1,\q_2,t)|t+S_{a_1}(\q_1,\q_2)\geq0\}}\in\D(E_1\x E_2\x\R)$ and  $\s|_{a_2}=\K_{\{(\q_2,\q_3,t)|t+S_{a_2}(\q_2,\q_3)\geq0\}}\in\D(E_2\x E_3\x\R)$. 
  Let $\rho$ be the projection along $E_2$. We then define stalkwisely
\begin{equation}\label{explicit form} 
\s|_{a_1+a_2}= R\rho_!\K_{\{(\q_1,\q_2,\q_3,t)|t+S_{a_1}(\q_1,\q_2)+S_{a_2}(\q_2,\q_3)\geq0\}}.
\end{equation}

One extends this out by defining $\s|_{a_1+a_2+\cdots+a_m}= \s|_{a_1}\U{E}{\bu} \s|_{a_2}\U{E}{\bu}\cdots\U{E}{\bu}\s|_{a_m}$. The extension to $\s|_{\{a\leq0\}}$ is similar. By the integral form of the action functional this extension $\s$ only depends on the sum $\Sigma a_j$ and it is easy to see $SS(\s)\cap T^*_{k>0}\subset L$.

\end{proof}

\vspace{1cm}
\subsection{Objects Micro-Supported on a Symplectic Ball}\ \\

Once we obtain the sheaf quantization $\s\in\D(G\x E_1\x E_2\x\R)$, we can perform Fourier transform (\textbf{Definition \ref{Fourier}}) on it with respect to $G$ ( i.e. $a$-coordinate). Set 
$$
\widehat{\s}:=\s\U{a}{\bu}\K_{\{(a,b,t)|t+ab\geq 0\}}[1]\in\D(G^\star\x E_1\x E_2\x\R).
$$
Since $SS(\s)\subset L$, we have $SS(\widehat{\s})\subset\widehat{L}$ where the hat $\wedge:T^*G\rightarrow T^*G^\star$ denotes the image under $(a,b)\mapsto(-b,a)$. Observe that for $Cone(\mathds{A})=L\subset T^*(G\x E_1\x E_2\x\R)$ the map $\wedge$ operates on $\{(a,-H(x))\}\subset T^*G$ whenever there is a point $(a,x)\in G\x T^*E$ whose value $S(a,x)$ hits the last coordinate of $\mathds{A}$. By (\ref{actiongraph}) and \textbf{Theorem \ref{FourierSS}} we get a useful estimation of our sectional micro-support:
 \begin{equation}\label{SSS}
\mu S(\widehat{\s})\subset\{(b,a,\mathbf{q}_1,\mathbf{p}_1,\mathbf{q}_2,\mathbf{p}_2)|(\mathbf{q}_2,\mathbf{p}_2)=\mathcal{A}_a(\mathbf{q}_1,\mathbf{p}_1),\; b=H(\mathbf{q}_1,\mathbf{p}_1)=H(\mathbf{q}_2,\mathbf{p}_2)\}.
\end{equation}

The object $\widehat{\s}$ will be our main ingredient to construct the desired projector. First, convolute $\widehat{\s}$ with the constant sheaf $\K_\R$:
\begin{equation}\label{Delta}
\widehat{\s}\U{b}{\circ}\K_\R=(\s\U{a}{\bu}\K_{\{t+ab\geq0\}}[1])\U{b}{\circ}\K_\R
\cong\s\U{a}{\bu}(\K_{\{t+ab\geq0\}}\U{b}{\circ}\K_\R)[1]
\end{equation}
$$
\cong\s\U{a}{\bu}\K_{\{a=0,t\geq 0\}}\cong \s|_{a=0}\cong\K_\Delta.
$$

This gives us the kernel representing the identity endofunctor on $\D(E\x\R)$. Now let $\K_{\{b<R^2\}}\in D(G^\star)$ be the constant sheaf supported on the open set $\{b<R^2\}$ and define 
\begin{equation}\label{P}
\pr := \widehat{\s}\U{b}{\circ}\K_{\{b<R^2\}}\in\D(E_1\x E_2\x\R)
\end{equation}
and its counterpart
\begin{equation}\label{Q}
\mathcal{Q}_{R} := \widehat{\s}\U{b}{\circ}\K_{\{b\geq R^2\}}\in\D(E_1\x E_2\x\R)
\end{equation}
so that we can form the following distinguished triangle in $\D(E_1\x E_2\x\R)$:
$$
\pr\rightarrow\K_\Delta\rightarrow\mathcal{Q}_R\xrightarrow{+1}.
$$

Recall that we define $D_{T^*E\setminus B}(E\x\R)$ by the full subcategory of all objects micro-supported in the closed set $T^*E\setminus B$ and ${\mathcal{D}}_B(E\x\R)$ to be its left semi-orthogonal complement. One might think that ${\mathcal{D}}_B(E\x\R) $ is some sort of subcategory of objects micro-supported on $B_R$. The precise statement is included in the following theorem:

\begin{thm}\label{decomposition}
Convolution with the distinguished triangle ($\pr\rightarrow\K_\Delta\rightarrow\mathcal{Q}_R\xrightarrow{+1}$) over $E$ gives a semi-orthogonal decomposition of the triangulated category $\D(E\x\R)$ with respect to the null system $D_{T^*E\setminus B}(E\x\R)$.
\end{thm}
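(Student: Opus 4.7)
The plan is to reduce the theorem to two membership claims: (A) $\F \U{E}{\bu} \mathcal{Q}_R \in D_{T^*E\setminus B}(E\x\R)$, and (B) $\F \U{E}{\bu} \pr \in \mathcal{D}_B(E\x\R)$, for every $\F \in \D(E\x\R)$. Combined with the automatic distinguishedness of the triangle $\F \U{E}{\bu} \pr \to \F \to \F \U{E}{\bu} \mathcal{Q}_R$ (by exactness of convolution applied to $\pr \to \K_\Delta \to \mathcal{Q}_R$, using $\F \U{E}{\bu} \K_\Delta \cong \F$) and the tautological vanishing $\mathrm{Hom}(\mathcal{D}_B, D_{T^*E\setminus B}) = 0$ built into the definition of the left semi-orthogonal complement, these two memberships deliver the semi-orthogonal decomposition.

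Claim (A) follows from a direct micro-support calculation based on (\ref{SSS}). The sectional micro-support of $\widehat{\s}$ sits on the graph of the Hamiltonian rotation $\mathcal{A}_a$ together with the auxiliary coordinate $b = H(\q_1,\p_1)$. Composing with $\K_{\{b \geq R^2\}}$ over $G^\star$ cuts to $H(\q_1,\p_1) \geq R^2$; since Hamiltonian rotation conserves $H$, we also have $H(\q_2,\p_2) \geq R^2$. Thus the kernel $\mathcal{Q}_R$ has sectional micro-support on pairs both lying outside $B_R$, and Kashiwara-Schapira functoriality for $SS$ under convolution (Appendix) produces $\mu S(\F \U{E}{\bu} \mathcal{Q}_R) \subset T^*E \setminus B_R$.

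Claim (B) is the delicate part: we must show $\mathrm{Hom}_{\D}(\F \U{E}{\bu} \pr, \G) = 0$ for every $\G \in D_{T^*E\setminus B}$. My plan is to reduce this to two kernel-level facts: the orthogonality $\pr \U{E}{\circ} \mathcal{Q}_R \cong 0$ in $\D(E_1 \x E_2 \x \R)$, and the idempotency $\G \U{E}{\bu} \mathcal{Q}_R \cong \G$ for $\G \in D_{T^*E\setminus B}$. Together with the adjunction between convolution and its internal hom (Proposition \ref{conv-comp}), these yield the desired Hom vanishing. Both kernel facts reduce to the analysis of the double composition $\widehat{\s} \U{E}{\circ} \widehat{\s}$: the semigroup property $\s|_{a_1} \U{E}{\bu} \s|_{a_2} \cong \s|_{a_1 + a_2}$ of Proposition \ref{rotation}, combined with Fourier transform in $a$ (which exchanges $a$-convolution with $b$-multiplication), enforces a $b_1 = b_2$ constraint reflecting the conservation of $H$ along the Hamiltonian flow. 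The disjointness $\{b < R^2\} \cap \{b \geq R^2\} = \varnothing$ then gives the orthogonality, while the diagonal intersection $\{b \geq R^2\} \cap \{b \geq R^2\}$ gives the idempotency.

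The main obstacle is the rigorous execution of the Fourier analysis in (B): tracking the $b_1 = b_2$ constraint across the non-compact cover $G$, handling the degeneracies of the generating function $S_a$ at $a = \pm\pi/2$ (cf. proof of Proposition \ref{rotation}) by gluing local pieces via (\ref{explicit form}), and verifying that any residual boundary contribution at $\{b = R^2\}$ lands in $D_{\leq 0}(E_1 \x E_2 \x \R)$ and thus vanishes upon passage to the localized category $\D$. Once this is in place, the semi-orthogonal decomposition follows formally from the two memberships together with the built-in distinguishedness and orthogonality.
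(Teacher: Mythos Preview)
Your reduction to Claims (A) and (B) and your handling of (A) are fine and essentially match the paper's Step~1. The problem is in (B). You propose to deduce $\mathrm{Hom}(\F\U{E}{\bu}\pr,\G)=0$ for $\G\in D_{T^*E\setminus B}(E\x\R)$ from two ``kernel facts'': the orthogonality $\pr\U{E}{\bu}\mathcal{Q}_R\cong0$ and the idempotency $\G\U{E}{\bu}\mathcal{Q}_R\cong\G$. But the second of these is not a kernel fact at all; it is a statement about an \emph{arbitrary} object $\G$ characterised only by a micro-support condition, and it does not follow from the kernel-level idempotency $\mathcal{Q}_R\U{E}{\bu}\mathcal{Q}_R\cong\mathcal{Q}_R$ that your Fourier analysis could deliver. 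Kernel idempotency only tells you that $\mathcal{Q}_R$ acts as the identity on the \emph{essential image} of $(-)\U{E}{\bu}\mathcal{Q}_R$; Step~1 shows this image lies inside $D_{T^*E\setminus B}(E\x\R)$, but you have no argument for the reverse inclusion, and that reverse inclusion is exactly what you are assuming when you write $\G\U{E}{\bu}\mathcal{Q}_R\cong\G$. (Your reference to Proposition~\ref{conv-comp} does not help here: that proposition translates convolution into composition, it is not an adjunction.)

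The paper closes this gap by a direct micro-support computation on the Hom side rather than on the kernel side. Unwinding $Rhom(\G\U{E}{\bu}\pr,\F)$ through a chain of adjunctions produces an object of the form $Rq_{1*}j^{-1}\underline{Rhom}(q_2^{-1}\widehat{\s},p^{!}\F)$ living over $(-\infty,R^2)\x E_1\x\R_1$; the estimates (\ref{SSS}) for $\widehat{\s}$ and the hypothesis $\mu S(\F)\subset\{H\geq R^2\}$ are then fed into Proposition~\ref{key} to force this object into $D_{\leq0}$, whence the Hom vanishes in $\D$. The crucial point is that the micro-support condition on the \emph{test object} $\F$ (not merely on the kernel) is what produces the contradiction $b<R^2\leq H(\q_2,\mathbf{p}_2')$ in the closure argument. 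Your proposed route via $\widehat{\s}\U{E}{\circ}\widehat{\s}$ never engages this condition on $\F$ and therefore cannot reach the conclusion.
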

\begin{proof}

There are three major steps in constructing a semi-orthogonal decomposition of a given triangulated category. First, we identify a null system $D_{T^*E\setminus B}(E\x\R)$ and its projector functor given by $\mathcal{Q}_R$. Second, we identify its left semi-orthogonal complement $\mathcal{D}_B(E\x\R)$ and the corresponding projector functor $\pr$. Finally we show that these two projectors, together with the identity endofunctor, give the desired decomposition of $\D(E\x\R)$. We refer the reader to \textbf{Section \ref{appendix} (Appendix)} for functorial properties of micro-supports which are used in the proof.\\

\textbf{Step 1}. Pick an arbitrary object $\G$ of $D(E_1\x\R)$, we are going to show that $\G\U{E_1}{\bu}\mathcal{Q}_R$ is an object of $D_{T^*E\setminus B}(E\x\R)$. Let us define the following notation of the maps:

\xymatrix{
&&& {G^\star}\x E_1\x E_2\x\R_1\x\R_2 \ar[ld]_{p_1} \ar[d]^{p} \ar[rd]^{q_2}   \\
&& E_1\x\R_1 & E_2\x\R &  {G^\star}\x E_1\x E_2\x\R_2   \\
}
\noindent here $p_1$ and $q_2$ are projections, and $p$ performs summation $\R_1\x\R_2\rightarrow \R$ and projection for the rest of components. By (\ref{Q}) we have
$$
\G\U{E_1}{\bu}\mathcal{Q}_R\cong \G\U{E_1}{\bu}\widehat{\s}\U{b}{\circ}\K_{b\geq R^2}
$$
$$
\cong Rp_!(p_1^{-1}\G\otimes q_2^{-1}\widehat{\s}\otimes \K_{[R^2,\infty)\x E_1\x E_2\x \R_1\x\R_2}).
$$

Let us estimate the corresponding micro-supports. By \textbf{Proposition \ref{pull}} and (\ref{SSS}) and \textbf{Theorem \ref{FourierSS}} we have
$$
SS(p_1^{-1}\G)\subset\{b,0,\mathbf{q}_1,k\mathbf{p}_1,\mathbf{q}_2,0,t_1,k,t_2,0|k\geq0  \}
$$
$$
SS(q_2^{-1}\widehat{\s})\subset\{(b,k'\beta,\mathbf{q}_1,k'\mathbf{p}_1',\mathbf{q}_2,k'\mathbf{p}_2',t_1,0,t_2,k') |k'\geq0,(\mathbf{q}_2,\mathbf{p}_2')=\mathcal{A}_\beta(\mathbf{q}_1,\mathbf{p}_1'), H(\mathbf{q}_i,\mathbf{p}_i')=b   \}.
$$
It is clear that 
$$
SS(p_1^{-1}\G) \cap SS(q_2^{-1}\widehat{\s})^\mathbf{a} \subset  T_{{G^\star}\x E_1\x E_2\x\R_1\x\R_2}^* ({G^\star}\x E_1\x E_2\x\R_1\x\R_2),
$$
thus by \textbf{Proposition \ref{tenhom}(i)} we have estimation
\begin{equation}\label{Mary}
SS(p_1^{-1}\G\otimes q_2^{-1}\widehat{\s})\subset SS(p_1^{-1}\G) +SS(q_2^{-1}\widehat{\s})
\end{equation}
$$
\subset \{(b,k'\beta,\mathbf{q}_1,k\mathbf{p}_1+k'\mathbf{p}_1',\mathbf{q}_2,k'\mathbf{p}_2',t_1,k,t_2,k'|k,k'\geq0,H(\mathbf{q}_i,\mathbf{p}_i')=b \}.
$$
On the other hand we know that 
\begin{equation}\label{Mandy}
SS(\K_{[R^2,\infty)\x E_1\x E_2\x \R_1\x\R_2})=\{(R^2,k'')|k''\geq0\}\x T_{ E_1\x E_2\x\R_1\x\R_2}^* ( E_1\x E_2\x\R_1\x\R_2)\}
\end{equation}
$$
\bigcup \{(b,0)|b>R^2 \}\x T_{ E_1\x E_2\x\R_1\x\R_2}^* ( E_1\x E_2\x\R_1\x\R_2)\}.
$$
It is also easy to see from (\ref{Mary}) and (\ref{Mandy}) we have
$$
SS(p_1^{-1}\G\otimes q_2^{-1}\widehat{\s})\cap SS(\K_{[R^2,\infty)\x E_1\x E_2\x \R_1\x\R_2})^\mathbf{a} \subset  T_{{G^\star}\x E_1\x E_2\x\R_1\x\R_2}^* ({G^\star}\x E_1\x E_2\x\R_1\x\R_2).
$$

Let us denote 
$$
\mathscr{H}:=p_1^{-1}\G\otimes q_2^{-1}\widehat{\s}\otimes \K_{[R^2,\infty)\x E_1\x E_2\x \R_1\x\R_2}.
$$

Apply \textbf{Proposition \ref{tenhom}(i)} on (\ref{Mary})(\ref{Mandy}) we get the following estimate
\begin{equation}\label{Macy}
SS(\mathscr{H})\subset SS(p_1^{-1}\G\otimes q_2^{-1}\widehat{\s})+SS(\K_{[R^2,\infty)\x E_1\x E_2\x \R_1\x\R_2})
\end{equation}
$$
\subset \{(R^2,k''+k'\beta,\mathbf{q}_1,k\mathbf{p}_1+k'\mathbf{p}_1',\mathbf{q}_2,k'\mathbf{p}_2',t_1,k,t_2,k')|k,k',k''\geq0 ,H(\mathbf{q}_i,\mathbf{p}_i')=R^2   \}
$$
$$
\bigcup\{(b,k'\beta,\mathbf{q}_1,k\mathbf{p}_1+k'\mathbf{p}_1',\mathbf{q}_2,k'\mathbf{p}_2',t_1,k,t_2,k'|k,k'\geq0,H(\mathbf{q}_i,\mathbf{p}_i')=b>R^2 \}.
$$

Before qualifying the members of the set $SS(Rp_!\mathscr{H})$, let us factor the map $p$ into a summation map $\sigma$ followed by a projection $\pi$:
$$
p: {G^\star}\x E_1\x E_2\x\R_1\x\R_2\xrightarrow{\sigma}{G^\star}\x E_1\x E_2\x\R_1\x\R_2\xrightarrow{\pi} E_2\x\R_2,
$$
where $\sigma : (t_1,t_2)\mapsto (t_1,t_1+t_2)$ and the rest of them are projections. Then apply \textbf{Proposition \ref{push}}  on (\ref{Macy}) we have 
\begin{equation}\label{Micky}
SS(R\sigma_!\mathscr{H})\subset \{(R^2,k''+k'\beta,\mathbf{q}_1,k\mathbf{p}_1+k'\mathbf{p}_1',\mathbf{q}_2,k'\mathbf{p}_2',t_1,k-k',t_2,k')|k,k',k''\geq0 ,H(\mathbf{q}_i,\mathbf{p}_i')=R^2   \}
\end{equation}
$$
\bigcup\{(b,k'\beta,\mathbf{q}_1,k\mathbf{p}_1+k'\mathbf{p}_1',\mathbf{q}_2,k'\mathbf{p}_2',t_1,k-k',t_2,k'|k,k'\geq0,H(\mathbf{q}_i,\mathbf{p}_i')=b>R^2 \}.
$$

Denote the above union of sets by $Z$, and define 
$$
\rho:T^*({G^\star}\x E_1\x E_2\x \R_1\x\R_2)\rightarrow T^*(E_2 \x\R_2)\x {G^\star}^\star\x  E_1^\star \x  \R_1^\star
$$ 
to be the projection along ${G^\star}\x E_1\x\R_1$, and let 
  $$
  i : T^*(E_2\x \R_2) \hookrightarrow  T^*( E_2\x \R_2)\x {G^\star}^\star \x E_1^\star \x  \R_1^\star
  $$
be the embedding into the zero of cotangent components ${G^\star}^\star\x E_1^\star \x  \R_1^\star
$.
For any vector $u=(\mathbf{q}_2,\mathbf{p},t_2,1)\in SS(R\pi_!(R\sigma_!\mathscr{H}))\subset T^*(E_2\x \R_2)$, apply \textbf{Proposition \ref{key}} on (\ref{Micky}) we have the following relation
$$
(-,0,-,0,\mathbf{q}_2,\mathbf{p},-,0,t_2,1)=i(u) \in \overline{\rho(SS(R\sigma_!\mathscr{H}))}\subset\overline{\rho(Z)}
$$
here $\overline{\rho(Z)}$ means the closure of $\rho(Z)$ and hyphens denote the arguments. Hence there are sequences $\{k\}$,$\{k'\}$,$\{\mathbf{p}_2'\}$ in terms of the corresponding coordinates of $Z$ satisfying the limit conditions $\{k'\}\rightarrow1$ and $\{k-k'\}\rightarrow0$ and $\{k'\mathbf{p}_2'\}\rightarrow \mathbf{p}$, respectively. This means that both $\{k\}$ and $\{k'\}$ approach to $1$, and then $\{\mathbf{p}_2'\}$ approaches to $\mathbf{p}$.

Since for any $\mathbf{p}_2'\in\{\mathbf{p}_2'\}$ we have $H(\mathbf{q}_2,\mathbf{p}_2')\geq R^2$, we must have its limit satisfying $H(\mathbf{q}_2,\mathbf{p})\geq R^2$. Hence the object $R\pi_!R\sigma_!\mathscr{H}$ must be a member of the category $D_{T^*E\setminus B}(E_2\x\R_2)$. So we finish the \textbf{Step 1} by concluding that
$$
\G\U{E_1}{\bu}\mathcal{Q}_R\cong Rp_!\mathscr{H}\cong R\pi_!R\sigma_!\mathscr{H} \in D_{T^*E\setminus B}(E_2\x\R_2).
$$
\textbf{Step 2.} We need to prove that convolution with $\pr$ projects onto the left semi-orthogonal complement of $D_{T^*E\setminus B}(E\x\R)$, which means for any object $\F$ in $D_{T^*E\setminus B}(E\x\R)$ and object $\G$ in $\D(E\x\R)$, we have $Rhom(\G \U {E_1}{\bu}\pr,\F)\cong 0$.

Let us define a bunch of maps in the following:  let 
$$
p_1:{G^\star}\x E_1\x E_2\x\R_1\x\R_2\xrightarrow{q_1}{G^\star}\x E_1\x\R_1\xrightarrow{\pi_1}E_1\x\R_1
$$
be successive projections onto the domain of $\G$, and 
$$
p: {G^\star}\x E_1\x E_2\x\R_1\x\R_2\xrightarrow{s}{G^\star}\x E_2\x\R\xrightarrow{\pi_2}E_2\x\R
$$
onto the domain of $\F$. Here the map $s$ performs summation $\R_1\x\R_2\rightarrow\R$, and for the rest of the components $s$ is projection. $\pi_2$ is projection along ${G^\star}$.

  Next, we want to describe the restriction to $(-\infty,R^2)$. Let 
\begin{equation*}
\begin{aligned}
(-\infty,R^2)\x E_1\x E_2\x\R_1\x\R_2\xrightarrow{j} {G^\star}\x E_1\x E_2\x\R_1\x\R_2  \\
(-\infty,R^2)\x E_1\x\R_1\xrightarrow{j_1} {G^\star}\x E_1\x\R_1\\
(-\infty,R^2)\x E_2\x\R\xrightarrow{j_2} {G^\star}\x E_2\x\R
\end{aligned}
\end{equation*}
be open embeddings of $(-\infty,R^2)$ into ${G^\star}$. Also define the projection map ${G^\star}\x E_1\x E_2\x\R_1\x\R_2\xrightarrow{q_2}{G^\star}\x E_1\x E_2\x\R_2$ onto the domain of $\widehat{\s}$. \\

\xymatrix{
&{G^\star}\x E_1\x E_2\x\R_2 & {G^\star}\x E_1\x E_2\x\R_1\x\R_2  \ar@/_/[lldd]_{q_1} \ar@/^3pc/[rdd]^{s} \ar@/^8pc/[rddd]^{p}   \ar[l]_{q_2}   & \\
& & (-\infty,R^2)\x E_1\x E_2\x\R_1\x\R_2 \ar@{^{(}->}[u]^{j}  \ar@{-->}[d]^{s} \ar@{-->}[dl]^{q_1}&   \\
{G^\star}\x E_1\x\R_1\ar[d]_{\pi_1}   & (-\infty,R^2)\x E_1\x\R_1\ar@{_{(}->}[l]_{j_1} &(-\infty,R^2)\x E_2\x\R\ar@{^{(}->}[r]^{j_2}  &    {G^\star}\x E_2\x\R  \ar[d]_{\pi_2} \\
E_1\x \R_1 & & &  E_2\x\R \\
&
}

With the above maps and (\ref{P}) we can unwrap $Rhom(\G \U {E_1}{\bu}\pr,\F)$ as the following:
$$
Rhom(\G \U {E_1}{\bu}\pr,\F)
$$
$$
\cong Rhom(\G \U {E_1}{\bu}(\widehat{\s}\U{b}{\circ}\K_{b<R^2})    ;\F)
$$
$$
\cong Rhom((\G \U {E_1}{\bu}\widehat{\s})\U{b}{\circ}\K_{b<R^2}    ;\F)
$$
$$
\cong Rhom(R\pi_{2!} Rj_{2!}j_2^{-1} (Rs_!(p_1^{-1}\G\otimes q_2^{-1}\widehat{\s})   )      ;\F)
$$
$$
\cong Rhom(R\pi_{2!} Rj_{2!}Rs_!  j^{-1} ( p_1^{-1}\G\otimes q_2^{-1}\widehat{\s} )        ;\F)
$$
$$
\cong Rhom(R\pi_{2!} Rs_! Rj_! j^{-1} ( p_1^{-1}\G\otimes q_2^{-1}\widehat{\s} )        ;\F)
$$
$$
\cong Rhom(Rp_! Rj_!j^{-1}(q_1^{-1}\pi_1^{-1}        \G\otimes q_2^{-1}\widehat{\s}  ) ;\F)
$$
$$
\cong Rhom(Rp_! ( (Rj_!j^{-1}q_1^{-1}\pi_1^{-1}        \G)\otimes q_2^{-1}\widehat{\s}  ) ;\F)
$$
$$
\cong Rhom(Rp_! ((  Rj_!q_1^{-1}j_1^{-1}\pi_1^{-1}        \G)\otimes q_2^{-1}\widehat{\s})   ;\F)
$$
$$
\cong Rhom ( Rj_!q_1^{-1}j_1^{-1}\pi_1^{-1}        \G  ; \underline{Rhom}(q_2^{-1}\widehat{\s} ;p^{!}\F  ))
$$
$$
\cong Rhom  (  j_1^{-1}\pi_1^{-1}        \G  ;Rq_{1*}j^!\underline{Rhom}(q_2^{-1}\widehat{\s} ;p^{!}\F  ))
$$
$$
=Rhom  (  j_1^{-1}\pi_1^{-1}        \G  ;Rq_{1*}j^{-1}\underline{Rhom}(q_2^{-1}\widehat{\s} ;p^{!}\F  )).
$$

Denote the object $\underline{Rhom}(q_2^{-1}\widehat{\s} ;p^{!}\F  )$ by $\mathscr{K}$ then the above sequence of adjunctions tells us 
\begin{equation}\label{Beth}
Rhom(\G \U {E_1}{\bu}\pr,\F)\cong Rhom  (  j_1^{-1}\pi_1^{-1}        \G  ;Rq_{1*}j^{-1} \mathscr{K}).
\end{equation}

Let us estimate the corresponding micro-supports with the help of (\ref{SSS}) \textbf{Proposition \ref{pull}} :
\begin{equation}\label{Bob}
SS(q_2^{-1}\widehat{\s})\subset\{(b,k\beta,\mathbf{q}_1,k\mathbf{p}_1,\mathbf{q}_2,k\mathbf{p}_2,t_1,0,t_2,k) |k\geq0,(\mathbf{q}_2,\mathbf{p}_2)=\mathcal{A}_\beta(\mathbf{q}_1,\mathbf{p}_1), H(\mathbf{q}_i,\mathbf{p}_i)=b   \}.
\end{equation}
Recall that from $\F\in D_{T^*E\setminus B}(E\x\R)$ we have
\begin{equation}\label{Ben}
SS((p^{!}\F  )\subset \{(b,0,\mathbf{q}_1,0,\mathbf{q}_2,k'\mathbf{p}_2',t_1,k',t_2,k')|k'>0,H(\mathbf{q}_2,\mathbf{p}_2')\geq R^2    \}\bigcup\{k'=0\}.
\end{equation}
From (\ref{Bob})(\ref{Ben}) is easy to see that 
$$
SS(q_2^{-1}\widehat{\s})\cap SS((p^{!}\F  )\subset T_{{G^\star}\x E_1\x E_2\x\R_1\x\R_2}^* ({G^\star}\x E_1\x E_2\x\R_1\x\R_2),
$$
so by \textbf{Proposition \ref{tenhom}(ii)} we have estimation
$$
SS( \underline{Rhom}(q_2^{-1}\widehat{\s} ,p^{!}\F  ) )\subset SS(q_2^{-1}\widehat{\s})^\mathbf{a} + SS((p^{!}\F  )
$$
$$
\subset \{ (b,-k\beta,\mathbf{q}_1,k\mathbf{p}_1,\mathbf{q}_2,k'\mathbf{p}_2'-k\mathbf{p}_2,t_1,k',k'-k)|k\geq0,k'>0,H(\mathbf{q}_i,\mathbf{p}_i)=b,H(\mathbf{q}_2,\mathbf{p}_2')\geq R^2   \}
$$
$$
\bigcup  T^*(G^\star\x E_1\x E_2) \x\R_1\x\R_2 \x\{  (0,-k)\in\R_1^{\star}\x\R_2^{\star}|k\geq0\}.
$$

Denote the above union of sets by $Z$ then the above inequality translates to
$$
SS(\mathscr{K})\subset Z.
$$

On the other hand, let us define
$$
\rho:T^*((-\infty,R^2)\x E_1\x E_2\x \R_1\x\R_2)\rightarrow T^*((-\infty,R^2)\x E_1 \x\R_1)  
$$ 
to be the projection along $E_2\x\R_2$, and let 
  $$
  i : T^*({G^\star}\x E_1\x \R_1) \hookrightarrow  T^*({G^\star}\x E_1\x \R_1)\x E_2^\star \x  \R_2^\star
  $$
  be the embedding to the zero section of the second cotangent component.

Now we assume that in $T^*((-\infty,R^2)\x E_1 \x\R_1)$ there exists a vector 
$$
v=(b,\beta,\mathbf{q}_1,\mathbf{p},t_1,1)\in SS(Rq_{1*}j^{-1}\mathscr{K}).
$$

By \textbf{Proposition \ref{key}}, the image $i(v)=(b,\beta,\mathbf{q}_1,\mathbf{p},-,0,t_1,1,-,0)$ must lie in the closure of $\rho(SS( j^{-1}\mathscr{K}  ,p^{!}\F  ) ))$ and hence in the closure of $\rho(Z)$. This means that there exist sequences $\{\mathbf{p}_2\},\{\mathbf{p}_2'\},\{k\},\{k'\}$ of cotangent coordinates of $Z$ such that $\{k'\mathbf{p}_2'-k\mathbf{p}_2\}\rightarrow 0$ and $\{k'\}\rightarrow 1$ and $\{k'-k\}\rightarrow 0$. Thus both $\{k'\}$ and $\{k\}$ approach to $1$ and since $\{\mathbf{p}_2\}$ ranges in the compact set $H(\mathbf{q}_2,\mathbf{p}_2)=b$ then both $\{\mathbf{p}_2\}$ and $\{\mathbf{p}_2'\}$ have the same finite limit. However, from $b\in(-\infty,R^2)$ we know that $H(\mathbf{q}_2,\mathbf{p}_2)=b < R^2=H(\mathbf{q}_2,\mathbf{p}_2')$, which leads to a contradiction. 

The above discussion shows that  
\begin{equation}\label{Bay}
Rq_{1*}j^{-1}\mathscr{K} \in D_{\leq0}( (-\infty,R^2)\x E_1 \x\R_1)).
\end{equation}

On the other hand since $\G\in \D( E_1\x\R_1)$ we have 
\begin{equation}\label{Belt}
j_1^{-1}\pi_1^{-1}   \G \in \D((-\infty,R^2)\x E_1\x\R_1).
\end{equation}

Then from (\ref{Beth})(\ref{Bay})(\ref{Belt}) we can conclude that 
$$
Rhom(\G \U {E_1}{\bu}\pr;\F)\cong Rhom  (  j_1^{-1}\pi_1^{-1}        \G  ;Rq_{1*}j^{-1}\mathscr{K})\cong 0.
$$

Hence $\pr$ is the convolution kernel of the projector onto the left semi-orthogonal complement of $D_{T^*E\setminus B}(E\x\R)$.\\

\textbf{Step 3}. We focus on the triangulated structure of $\D(E\x\R)$. Since $D_{T^*E\setminus B}(E\x\R)$ is defined by the full subcategory of objects whose sectional micro-supports are outside $B_R$, then for any object $\F$ it is clear that $\F\in D_{T^*E\setminus B}(E\x\R)$ if and only if $\F[1]\in D_{T^*E\setminus B}(E\x\R)$. Moreover, for any distinguished triangle in $\D(E\x\R)$:
$$
\F\rightarrow\G\rightarrow\mathscr{H}\xrightarrow{+1}
$$
we always have the estimation
$$
SS(\G)\subset SS(\F)\bigcup SS(\mathscr{H}).
$$

It implies that if both $\F$ and $\mathscr{H}$ live in $D_{T^*E\setminus B}(E\x\R)$, then so does $\G$. Also, $0\in D_{T^*E\setminus B}(E\x\R)$, hence $D_{T^*E\setminus B}(E\x\R)$ forms a null system in $\D(E\x\R)$. Furthermore, for any $\F$ and $\G$ in $\D(E\x\R)$, we have the relation
$$
SS(\F\bigoplus\G)=SS(\F) \bigcup SS(\G),
$$ 
thus the triangulated subcategory $D_{T^*E\setminus B}(E\x\R)$ is \textsl{thick}: condition $\F \bigoplus \G\in D_{T^*E\setminus B}(E\x\R)$ implies that both $\F$ and $\G$ live in $D_{T^*E\setminus B}(E\x\R)$. Let us denote its left semi-orthogonal complement by ${\mathcal{D}}_B(E\x\R)$. Now for any object $\F\in \D(E\x\R)$ we have a decomposition in terms of the following distinguished triangle:\\
$$
\F\U{E}{\bu}\pr\rightarrow \F \rightarrow \F\U{E}{\bu}\mathcal{Q}_R \xrightarrow{+1}
$$\\
where $ \F\U{E}{\bu}\pr\in {\mathcal{D}}_B(E\x\R)$ and $\F\U{E}{\bu}\mathcal{Q}_R\in D_{T^*E\setminus B}(E\x\R)$. This means that the embedding functor $D_{T^*E\setminus B}(E\x\R)\hookrightarrow \D(E\x\R)$ admits a right adjoint and there is an equivalence $D_{T^*E\setminus B}(E\x\R)\cong \D(E\x\R)/ {\mathcal{D}}_B(E\x\R)$. It is equivalent to say that ${\mathcal{D}}_B(E\x\R)\hookrightarrow \D(E\x\R)$ admits a left adjoint functor, and we have the following equivalence of triangulated categories:
$$
 {\mathcal{D}}_B(E\x\R) \cong \D(E\x\R)/ D_{T^*E\setminus B}(E\x\R). 
 $$

\end{proof}

\subsection{Interlude: the geometry of $\pr$}\ \\

 In this subsection we would like to mention the periodic structure of the projector $\pr$. First, let us unwrap $\pr$ in terms of the sheaf quantization $\s$ and its convolutions (or compositions). By (\ref{P}) we have
\begin{equation}\label{block}
\pr:=\widehat{\s}\U{b}{\circ}\K_{\{b<R^2\}}
\end{equation}
$$
=(\s\U{a}{\bu}\K_{\{t+ab\geq0\}}[1])\U{b}{\circ}\K_{\{b<R^2\}}
$$
$$
\cong\s\U{a}{\bu}(\K_{\{t+ab\geq0\}}\U{b}{\circ}\K_{\{b<R^2\}})[1]
$$
$$
\cong\s\U{a}{\bu}\K_{\{ (a,t)|a\leq 0 \, ,\, t+aR^2\geq 0\}}.
$$

Define $\mathfrak{G}:a\mapsto a-\f{\pi}{2}$ to be the translation along $G$ and $\mathfrak{T}:t\mapsto t-\f{\pi}{2} R^2$ the translation along $\R_t$. We have $\mathfrak{G}_*\s\cong \K_A[-n]\U{E}{\circ}\s$, here $A=\{(\mathbf{q,-q})\}$ is the anti diagonal in $E\x E$. Moreover, $\mathfrak{G}$ and $\mathfrak{T}$ interact in the following way:
$$
\pr\cong  \s\U{a}{\bu}\K_{\{ (a,t)|a\leq 0 \, ,\, t+aR^2\geq 0\}}
$$
$$
\cong \mathfrak{G}_*\s\U{a}{\bu}   \mathfrak{G}_*\K_{\{ (a,t)|a\leq 0 \, ,\, t+aR^2\geq 0\}}
$$
$$
\cong\K_A[-n] \U{E}{\circ}\s\U{a}{\bu}  \K_{\{a\leq\f{\pi}{2} \,,\,t+aR^2-\f{\pi}{2}R^2\geq0\}}
$$
$$
\cong \K_A[-n]\U{E}{\circ} \s \U{a}{\bu} \K_{\mathfrak{T}^{-1}\{ a\leq\f{\pi}{2} \, ,\, t+aR^2\geq 0 \}}
$$
$$
\cong (\K_A[-n] \U{E}{\circ}\s\U{a}{\bu} \mathfrak{T}^{-1}\K_{\{a\leq\f{\pi}{2}\,,\,t+aR^2\geq0\}})
$$
$$
\cong \mathfrak{T}^{-1}(\K_A[-n]\U{E}{\circ} \s \U{a}{\bu}\K_{\{a\leq\f{\pi}{2}\,,\,t+aR^2\geq0\}}).
$$

After applying $\mathfrak{T}$ on both sides and composing with $\K_A$ it becomes
$$
\K_A\U{E}{\circ}\mathfrak{T}(\pr)\cong\s\U{a}{\bu}\K_{\{a\leq\f{\pi}{2}\,,\,t+aR^2\geq0\}}[-n].
$$

On the other hand, from the closed embedding $\{a\leq0\}\hookrightarrow\{a\leq \f{\pi}{2}\}$ there is a restriction morphism 
$$
\gamma :\s\U{a}{\bu}\K_{\{a\leq\f{\pi}{2}\,,\,t+aR^2\geq0\}}{\rightarrow }\s\U{a}{\bu}\K_{\{a\leq0\,,\,t+aR^2\geq0\}}
$$
which is exactly the morphism
\begin{equation}\label{gam}
\gamma : \K_A[n]\U{E}{\circ}\mathfrak{T}(\pr)\rightarrow \pr.
\end{equation}

Let $\Gamma$ be the co-cone of $\gamma$. By co-cone we mean that $\Gamma$ can be embedded into the distinguished triangle
\begin{equation}\label{Gamma}
\Gamma\rightarrow \K_A[n]\U{E}{\circ}\mathfrak{T}(\pr)\rightarrow\pr\xrightarrow{+1},
\end{equation}
thus by (\ref{gam}) we have
\begin{equation}\label{gamS}
\Gamma\cong \s\U{a}{\bu}\K_{\{0<a\leq\f{\pi}{2}\,,\,t+aR^2\geq0\}}.
\end{equation}

The object $\Gamma$ is a constant sheaf concentrated in degree $0$ as follows. For $0<a<\f{\pi}{2}$, let
$$
f(a)=-S_a(\mathbf{q}_1,\mathbf{q}_2)-aR^2.
$$ 

The function $f(a)$ has two critical points(or one with multiplicity two) $a_1$ and $a_2$. We assume that $f(a_1)\geq f(a_2)$.  Let  $d=(\mathbf{q}_1\mathbf{q}_2)^2-R^2(\mathbf{q}_1^2+\mathbf{q}_2^2-R^2)$ and $D=\{(\mathbf{q}_1,\mathbf{q}_2)| d\geq0\}$.  

\begin{prop}
Let $\Sigma=\{(\mathbf{q}_1,\mathbf{q}_2,t)|(\mathbf{q}_1,\mathbf{q}_2)\in D\, ,\, f(a_2)\leq t<f(a_1)\}$  be the subset of $E_1\x E_2\x \R$.  We have $\Gamma\cong\K_\Sigma$.
\end{prop}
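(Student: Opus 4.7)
The plan is to compute the stalks of $\Gamma$ from the explicit form in (\ref{gamS}) and identify them with those of $\K_\Sigma$, then upgrade stalkwise agreement to a sheaf isomorphism by constructibility. First I would unwind the convolution $\Gamma = \s \U{a}{\bu} \K_W$ with $W = \{(a,t) : 0 < a \leq \pi/2,\ t + aR^2 \geq 0\}$ via Definition \ref{convolution}. After substituting $t_1 = t - t_2$, the stalk at $(\q_1, \q_2, t)$ becomes $R\Gamma_c(F_t; \K)$, where
$$F_t = \{(a, t_2) : 0 < a \leq \pi/2,\ -aR^2 \leq t_2 \leq t + S_a(\q_1, \q_2)\}.$$
Projection to the $a$-factor is a map to $A_t := \{a \in (0, \pi/2] : f(a) \leq t\}$ whose non-empty fibers are compact intervals, so contractibility of fibers together with proper base change yields $R\pi_! \K_{F_t} \cong \K_{A_t}$, reducing the computation to $\Gamma_{(\q_1,\q_2,t)} \cong R\Gamma_c(A_t; \K)$.

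Next I would analyze the critical points of $f(a) = -S_a - aR^2$ on $(0, \pi/2)$. The Hamilton-Jacobi identity $\partial_a S_a = -H$, deducible from the $da$ component of $dS = \mathcal{A}^* \alpha - \alpha - \eta$ or by direct differentiation of $S_a$, gives $f'(a) = H - R^2$, so critical points are exactly the times when the harmonic-oscillator energy along the corresponding trajectory from $\q_1$ to $\q_2$ equals $R^2$. Substituting $\mathbf{p}_1 = (\q_2 - \cos(2a)\q_1)/\sin(2a)$ into $H = \q_1^2 + \mathbf{p}_1^2$ turns this into a quadratic in $\cos(2a)$ with discriminant $4d$, so critical points exist iff $(\q_1, \q_2) \in D$; computing $f''$ at the two roots shows the smaller-$a$ root is a local maximum (this is $a_1$ in the paper's labeling) and the larger-$a$ root is a local minimum ($a_2$), confirming $f(a_1) \geq f(a_2)$. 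The rewritings of $S_a$ yield endpoint asymptotics $f \to -\infty$ as $a \to 0^+$ (generically, when $\q_1 \neq \q_2$) and $f \to +\infty$ as $a \to \pi/2^-$ (generically, when $\q_1 + \q_2 \neq 0$), so $f$ has the shape ``$-\infty \to $ max $\to $ min $\to +\infty$'' on the interior of $D$.

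Reading off the topology of $A_t$: for $(\q_1, \q_2) \notin D$, or for $(\q_1, \q_2) \in D$ with $t$ outside $[f(a_2), f(a_1))$, the set $A_t$ is a single half-open interval $(0, \alpha]$ or empty; for $(\q_1, \q_2)$ in the interior of $D$ and $f(a_2) \leq t < f(a_1)$, it gains an additional compact component $K$ containing $a_2$. A short argument with the long exact sequence of the pair $([0, \alpha], \{0\})$ gives $R\Gamma_c((0, \alpha]; \K) = 0$, while $R\Gamma_c(K; \K) = \K$ concentrated in degree zero; thus the stalk of $\Gamma$ is $\K$ precisely on $\Sigma$ and $0$ elsewhere. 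To upgrade to a sheaf isomorphism I would observe that $\Gamma$ is constructible with respect to a natural sub-analytic stratification for which the interior of $\Sigma$ is an open stratum, is concentrated cohomologically in degree zero, and is a rank-one trivial local system there; the boundary structure (closed along $\{t = f(a_2)\}$, open along $\{t = f(a_1)\}$) matches that of $\K_\Sigma$ by the half-open versus compact dichotomy above. The principal obstacle will be controlling the degenerate strata $\partial D$, $\{\q_1 = \q_2\}$, and $\{\q_1 + \q_2 = 0\}$, where the endpoint behavior of $f$ changes; on each such locus a parallel analysis of $A_t$ shows the compact component collapses exactly as prescribed by the defining conditions of $\Sigma$, yielding the natural isomorphism $\Gamma \cong \K_\Sigma$ in the constructible derived category.
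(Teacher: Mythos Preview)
Your approach coincides with the paper's: both reduce the stalk at $(\q_1,\q_2,t)$ to $R\Gamma_c$ of the sublevel set $\{a : f(a)\leq t\}$, locate the critical points of $f$ via the Hamilton--Jacobi condition $H=R^2$ and the resulting quadratic in $\cos(2a)$ with discriminant $d$, and treat the degenerate loci $\q_1=\pm\q_2$ as separate cases; your added constructibility remark (upgrading stalkwise agreement to a sheaf isomorphism) is a point the paper leaves implicit. Your endpoint asymptotics for $f$ are in fact the correct ones---the paper's Case~1 text has the behavior at $a\to 0^+$ and $a\to\pi/2^-$ interchanged, though its figures and subsequent conclusions agree with your version. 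One small point to tighten: at $a=\pi/2$ with $\q_1+\q_2\neq 0$ the sheaf $\s$ is supported on all of $\R_{t_1}$, so the true fiber $F_t$ acquires an extra closed half-line there not captured by your formula $-aR^2\leq t_2\leq t+S_a$; the paper disposes of this by first pushing forward along the $t$-summation (that half-line has vanishing $R\Gamma_c$), and you should do likewise before projecting to $A_t$.
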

\begin{proof}
According to the construction of the sheaf quantization object $\s$(\textbf{Proposition \ref{rotation}}) we know from (\ref{rightend}) there is 
$$
\s|_{(0,\pi/2]\x E\x E\x\R}\cong\K_{\{0<a<\f{\pi}{2} \, , \,t+S_a(\mathbf{q}_1,\mathbf{q}_2)\geq0\}\bigcup\{(\f{\pi}{2} ,\mathbf{q,-q},t)|t\geq0\}\bigcup\{a=\f{\pi}{2} ,\mathbf{q}_1+\mathbf{q}_2\neq 0\}}.
$$

Now let $\pi_1:G\x E\x E\x \R_1\x\R_2\rightarrow G\x\R_1$ and $\pi_1:G\x E\x E\x \R_1\x\R_2\rightarrow G\x E\x E\x\R_2$ be the projection maps, and let $\pi:G\x E\x E\x \R_1\x\R_2\rightarrow G\x E\x E\x\R_t$ be the summation $t=t_1+t_2$. Also define $p:G\x E\x E\x\R_t\rightarrow E\x E\x\R_t$ to be the projection along $G$, then from (\ref{gamS}) we have
$$
\Gamma\cong \s\U{a}{\bu}\K_{\{0<a\leq\f{\pi}{2}\,,\,t+aR^2\geq0\}}
$$
$$
= Rp_!R\pi_!(\pi_1^{-1}\K_{\{ 0<a\leq\f{\pi}{2}\,,\,t+aR^2\geq0\}}\otimes\pi_2^{-1}\K_{\{0<a<\f{\pi}{2} \, , \,t+S_a(\mathbf{q}_1,\mathbf{q}_2)\geq0\}\bigcup\{(\f{\pi}{2},\mathbf{q,-q},t)|t\geq0\}\bigcup\{a=\f{\pi}{2} ,\mathbf{q}_1+\mathbf{q}_2\neq 0\}})
$$
$$
 \cong Rp_! \K_W,
$$
where the set $W\subset G\x E\x E\x\R_t$ is given by
$$
W=\{0<a<\f{\pi}{2},t+aR^2+S_a(\mathbf{q}_1,\mathbf{q}_2)\geq0\}\bigcup\{a=\f{\pi}{2},\mathbf{q}_1+\mathbf{q}_2=0,t+\f{\pi}{2}R^2\geq 0 \}.
$$

Note that the last term $\{a=\f{\pi}{2},\mathbf{q}_1+\mathbf{q}_2\neq0\}$ is vanished by taking compact support cohomology of the fiber of $\pi$.  Next, we need to examine the compactly supported cohomology of the fibers of $p$ for each $(\mathbf{q}_1,\mathbf{q}_2)$, and then give a geometric characterization of our co-cone $\Gamma$. \\

\textbf{Case 1. } For $\mathbf{q}_1+\mathbf{q}_2\neq 0$, we can consider the function $f(a)=-S_a(\mathbf{q}_1,\mathbf{q}_2)-aR^2$ on $0<a<\f{\pi}{2}$. The function $f$ goes to the negative infinity when $a$ goes to $\f{\pi}{2}$, and $f$ goes to the positive infinity when $a$ goes to $0$ while maintaining $\mathbf{q}_1\neq \mathbf{q}_2$. So it is clear that the object $Rp_!\K_W$, at any given $\mathbf{q}_1\pm \mathbf{q}_2\neq0$, is only possibly supported on those $t$ which sit between the critical values of $f(a)$. The corresponding critical points satisfy $\f{\p f}{\p a}=0$ and are exactly the solutions of the Hamilton-Jacobi equation for the function $H\mathbf{(q,p)}=\mathbf{q}^2+\mathbf{p}^2$ being fixed to $R^2$. It means that there exists $\mathbf{p}_1$ and $\mathbf{p}_2$ satisfying $H(\mathbf{q}_1,\mathbf{p}_1)=R^2$ and $(\mathbf{q}_2,\mathbf{p}_2)=\widetilde{\mathds{H}}(a)(\mathbf{q}_1,\mathbf{p}_1)$. From the equations (\ref{subjectto})
$$
\mathbf{p}_1=\f{\mathbf{q}_2-\mathbf{q}_1\cos(2a)}{\sin(2a)} \quad,\quad R^2=\mathbf{q}_1^2+\mathbf{p}_1^2
$$
we arrive to the quadratic equation in $\cos(2a)$:
$$
R^2 \cos^2(2a)-2\mathbf{q}_1\mathbf{q}_2 \cos(2a) + (\mathbf{q}_1^2+\mathbf{q}_2^2-R^2)=0.
$$

We see that for $|\xi|>1$, 
$$
R^2\xi^2-2\mathbf{q}_1\mathbf{q}_2\xi+(\mathbf{q}_1^2+\mathbf{q}_2^2-R^2)=(\xi\mathbf{q}_1-\mathbf{q}_2)^2+(R^2-\mathbf{q}_1^2)(\xi^2-1)
$$
is always a positive number. So whenever it possesses a real solution $\xi$, $\xi$ must be in $[-1,1]$, to which we can assign with a cosine value $\xi=\cos(2a)$. Let $d=(\mathbf{q}_1\mathbf{q}_2)^2-R^2(\mathbf{q}_1^2+\mathbf{q}_2^2-R^2)$ be its discriminant, then only when $d>0$ we have two distinct solutions  $\cos(2a_1)$ and $\cos(2a_2)$. Here we fix the order by setting $0\leq a_1\leq a_2\leq \f{\pi}{2}$. According to the boundary behavior of $S_a$ when $a$ approaches $0$ and $\f{\pi}{2}$ respectively, it is always true that $f(a_2)<f(a_1)$. Set $D=\{(\mathbf{q}_1,\mathbf{q}_2)| d\geq0\}$, so we know that in the case $(\mathbf{q}_1,\mathbf{q}_2)\in int(D)$, the object $Rp_!\K_W$ is supported by  
$$
\{(\mathbf{q}_1,\mathbf{q}_2,t)|(\mathbf{q}_1,\mathbf{q}_2)\in int(D) \,,\, f(a_2)\leq t<f(a_1)\}.
$$

\textbf{Case 2. }  For $\mathbf{q}_1=\mathbf{q}_2=\mathbf{q}$, the function $S_a(\mathbf{q}_1,\mathbf{q}_2)$ becomes $-\tan(a)\mathbf{q}^2$, and it is easy to see that $f(a)=\tan(a)\mathbf{q}^2-aR^2$ has only one critical point $a_2$ satisfying 
$$
\cos(a_2)=\f{|\mathbf{q}|}{R}.
$$
Since $f(a)$ goes to $0$ as $a$ goes to $0$, and $f(a)$ goes to the infinity as $a$ goes to $\f{\pi}{2}$, the object $Rp_!\K_W$ has support on 
$$
\{(\mathbf{q,q},t)|f(a_2)\leq t<0 \}.
$$

\textbf{Case 3. }   $\mathbf{q}_1=-\mathbf{q}_2:=\mathbf{q}$, the function $S_a(\mathbf{q}_1,\mathbf{q}_2)$ becomes $\f{\mathbf{q}^2}{\tan(a)}$. Let us consider the function $f(a)=-\f{\mathbf{q}^2}{\tan(a)}-aR^2$ where $a$ ranges over $(0,\f{\pi}{2}]$. Solving the critical point equation $\f{\p f}{\p a}=0$ we get the unique solution $a_1\in(0,\f{\pi}{2}]$ satisfying 
$$
\sin(a_1)=\f{|\mathbf{q}|}{R}.
$$

Since $f(a)$ goes to negative infinity as $a$ approaches to $0$ and goes to $-\f{\pi}{2}R^2$ as $a$ goes to $\f{\pi}{2}$, the object $Rp_!\K_W$ has support on the set
$$
\{(\mathbf{q,-q},t)|-\f{\pi}{2}R^2\leq t< f(a_1)\}.
$$

These three cases are depicted in the following graph. The horizontal X-axis represents $a$, and the vertical Y-axis parametrizes $t$.
\begin{figure}[ht] 
\begin{center}
\mbox{
\subfigure[Case 1]{\includegraphics[height= 4.4cm]{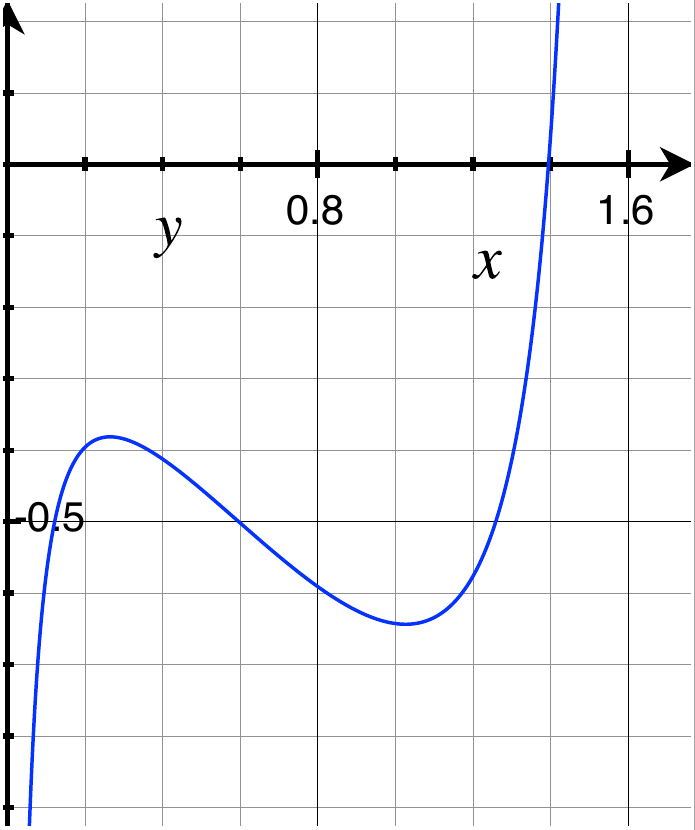}}\qquad
\subfigure[Case 2]{\includegraphics[height= 4.4cm]{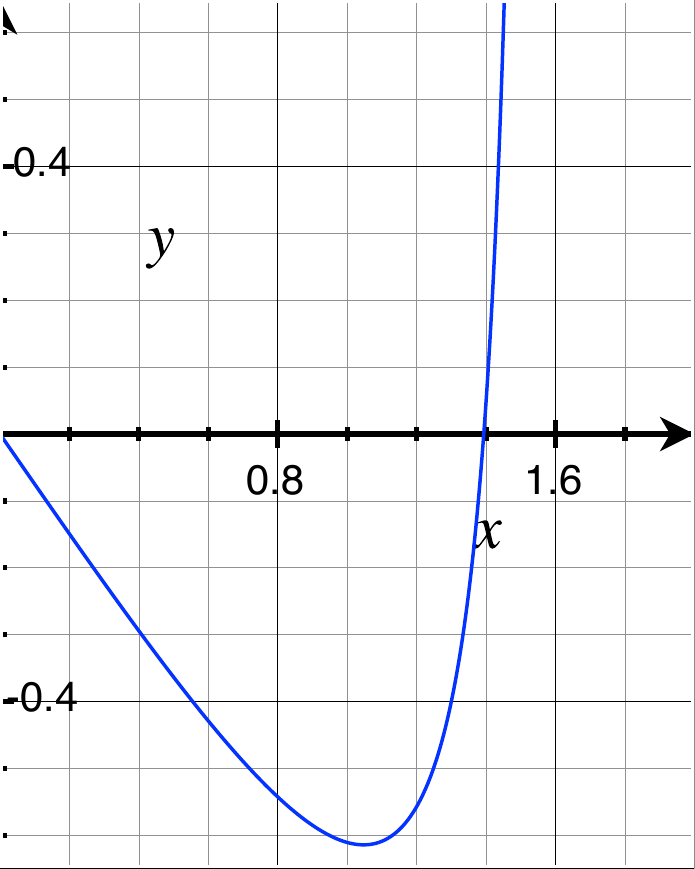}}\qquad
\subfigure[Case 3]{\includegraphics[height= 4.4cm]{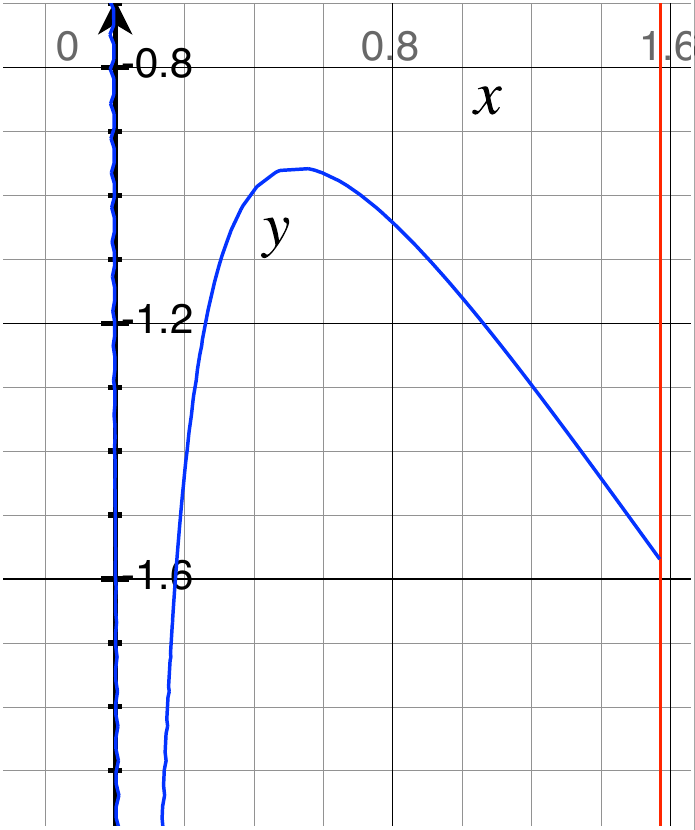}} }
\end{center}
\end{figure}

Notes that in \textbf{Case 3} the function $f(a)$ can be defined at $a=\f{\pi}{2}$, where the fiber of $p$ has a closed end sitting at the rightmost red vertical line. While in other cases we have open ends at both $0$ and $\f{\pi}{2}$, so the cohomology of compact support only counts for $t$ values of convex part. It is clear that \textbf{Case 2} and \textbf{Case 3} are actually continuous degenerations of \textbf{Case 1} to the left and right respectively. Thus we can synthesize definitions of $f(a)$ and $a_1$,$a_2$ in all cases and by  the definition of  $\Sigma=\{(\mathbf{q}_1,\mathbf{q}_2,t)|(\mathbf{q}_1,\mathbf{q}_2)\in D\, ,\, f(a_2)\leq t<f(a_1)\}$, we have the following isomorphisms :
$$
\Gamma\cong Rp_!\K_W\cong \K_\Sigma.
$$

\end{proof}

When $E$ is one dimensional, we can visualize $\Sigma$ in the following pictures. The vertical axis represents $t$-coordinate. The top is open while the bottom boundary surface is closed.\\

\begin{figure}[ht] 
\begin{center}
\mbox{
\subfigure[From above]{\includegraphics[height= 4.4cm]{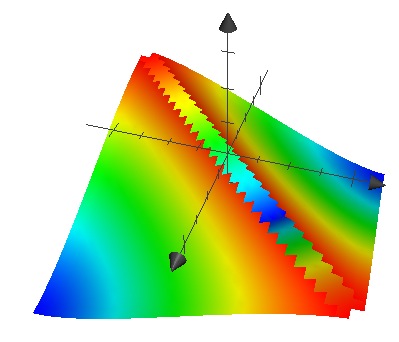}}
\subfigure[side]{\includegraphics[height= 4.4cm]{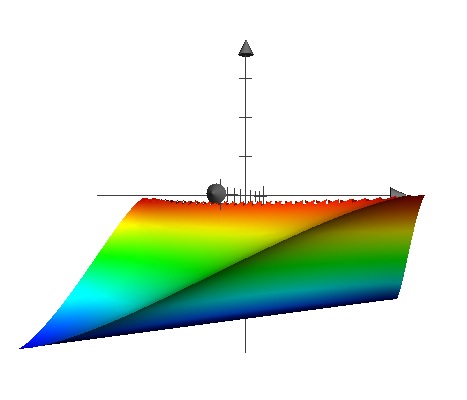}}
\subfigure[From below]{\includegraphics[height= 4.4cm]{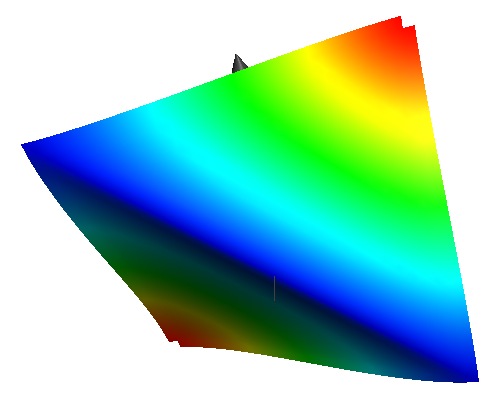}} }
\end{center}
\end{figure}

Recall the distinguished triangle (\ref{Gamma})
$$
\Gamma\rightarrow \K_A[n]\U{E}{\circ}\mathfrak{T}(\pr)\rightarrow\pr\xrightarrow{+1}.
$$

The building blocks of $\pr$ consist of copies of $\Gamma$ as follows. Let $\Gamma_1=\mathfrak{T}^{-1}\Gamma$ and for $j\geq2$ let $\Gamma_{j+1}=\mathfrak{T}^{-1}\K_A[-n]\U{E}{\circ}\Gamma_j$. Each constant sheaf $\Gamma_{j+1}$ is obtained from the previous constant sheaf $\Gamma_j$ by lifting its $t$-coordinate by $\f{\pi}{2}R^2$ and twisting $E_1\x E_2$ part and then shift its cohomological degree by $n$. The support of $\pr$ is a stack of the supports of those building blocks $\Gamma_j$'s. In fact one can glue all $\Gamma_j$ (concentrated at degree $jn$) to obtain the projector $\pr$ in the sense of homotopy colimit. 

Let us mention a few more words about the micro-support of $\Gamma$. $SS(\Gamma)$ has nontrivial cotangent fibers only for those $t$ hitting the critical value of $f(a)$. From the expression $f(a)=-S_a(\mathbf{q}_1,\mathbf{q}_2)-aR^2$ we have
$$
df=-dS-d(aR^2)=-\mathcal{A}^*\alpha+\alpha+Hda-R^2da
$$ 
$$
=\mathbf{p}_1d\mathbf{q}_1-\mathbf{p}_2d\mathbf{q}_2+(H-R^2)da=\f{\p f}{\p \mathbf{q}_1}d\mathbf{q}_1+\f{\p f}{\p \mathbf{q}_2}d\mathbf{q}_2+\f{\p f}{\p a}da.
$$

After evaluating at the critical points of $f(a)$, we get $H=R^2$, and hence 
$$
\mathbf{q}_i^2+(\f{\p f}{\p \mathbf{q}_2})^2=\mathbf{q}_i^2+\mathbf{p}_i^2=H=R^2.
$$

This also allows one to show the projector property (\textbf{Theorem \ref{decomposition}}) by concretely considering each blocks and their gluing. Notice that the upper cap can be deformed to the lower bottom by some non-characteristic deformations.\\

\subsection{Objects Micro-Supported on a Contact Ball}\label{contactsupp}\ \\

Now we can define the \textsl{contact} projector $\PR$ by lifting the symplectic framework. Recall that by \textbf{Theorem \ref{decomposition}} there exists a  symplectic projector which is  represented by the object $\pr$ (\ref{P}) of $\D(E_1\x E_2 \x \R_t)$, in the sense that for any object $\F$ of $\D(E_1\x\R_t)$ we have 
$$
\F\U{E_1}{\bu}\pr \in \mathcal{D}_B(E_2\x\R_t).
$$

Consider the difference map $\delta:\R_1\x\R_2\rightarrow\R_t$ , $(z_1,z_2)\mapsto z_2-z_1$. $\delta$ lifts kernel convolutions to compositions. Let us define
$$
\PR := \delta^{-1} \pr \in D(E_1\x\R_1\x E_2 \x\R_2)=D(X_1\x X_2).
$$

\begin{prop}\label{contactpr}
$\PR$ is the composition kernel of the projector from $\D(X)$ to $\mathcal{D}_{C_R}(X)$ (see \textbf{Definition \ref{DC}}).
\end{prop}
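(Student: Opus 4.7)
The plan is to reduce this contact statement to the symplectic projector theorem (Theorem~\ref{decomposition}) via the base change along the difference map $\delta$, which is specifically designed to convert $\R_t$-convolution into $\R_z$-composition.

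The first step is to show that composition with $\PR = \delta^{-1}\pr$ in the contact category agrees with convolution--composition by $\pr$ in the symplectic category. For $\F \in \D(X_1)$, the integrand defining $\F \U{X_1}{\circ} \PR$ at a point $(e_1, z_1, e_2, z_2)$ is
\[
\F(e_1, z_1) \otimes \pr(e_1, e_2, z_2 - z_1),
\]
and integration over $(e_1, z_1) \in X_1$ becomes, after the change of variable $(t_1, t_2) = (z_1, z_2 - z_1)$, precisely the convolution integral of Definition~\ref{convolution}:
\[
\F \U{X_1}{\circ} \PR \;\cong\; \F \U{E_1}{\bu} \pr,
\]
where on the right $\F$ is regarded as an object of $\D(E_1 \x \R_t)$ under the identification $\R_{z_1} = \R_t$. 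Equivalently, one may invoke Proposition~\ref{conv-comp}: the map $\delta$ is exactly the additive ingredient that packages $\R_t$-convolution as $\R_z$-composition.

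The second step is to translate the micro-support conditions. By construction $C_R = \mathrm{Cone}(\widetilde{U_R})$ is conic under $F_\lambda : (\q, \mathbf{p}, z, \zeta) \mapsto (\q, \lambda \mathbf{p}, z, \lambda\zeta)$, and its slice at $\zeta = 1$ is exactly $B_R \x \R_z$. Since $SS(\G) \cap \{\zeta > 0\}$ is always conic for $\G \in \D(X)$, the inclusion $SS(\G) \cap \{\zeta > 0\} \subset C_R$ is equivalent, by rescaling each point to $\zeta = 1$, to the sectional condition $\mu S(\G) \subset B_R$. Consequently the full subcategories $D_{Y \setminus C_R}(X)$ and $D_{T^*E \setminus B}(E \x \R)$ are identified, and so are their left semi-orthogonal complements $\mathcal{D}_{C_R}(X)$ and $\mathcal{D}_B(E \x \R)$.

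Combining the two steps with Theorem~\ref{decomposition} finishes the argument: the symplectic assertions $\F \U{E_1}{\bu} \pr \in \mathcal{D}_B$ and $Rhom(\F \U{E_1}{\bu} \pr, \G) = 0$ for $\G \in D_{T^*E \setminus B}$ translate directly into $\F \U{X_1}{\circ} \PR \in \mathcal{D}_{C_R}(X)$ and $Rhom(\F \U{X_1}{\circ} \PR, \G) = 0$ for $\G \in D_{Y \setminus C_R}(X)$. Applying the exact pullback $\delta^{-1}$ to the distinguished triangle $\pr \rightarrow \K_\Delta \rightarrow \mathcal{Q}_R \xrightarrow{+1}$ produces the triangle $\PR \rightarrow \K_{\widetilde{\Delta}} \rightarrow \mathscr{Q}_R \xrightarrow{+1}$ advertised in Section~\ref{tricat}, yielding the full semi-orthogonal decomposition. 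The main subtlety I anticipate is the micro-support bookkeeping in Step 2: one must verify that the localizations at $D_{\leq 0}$ on both sides are compatible, and that the free $\R_z$-direction in $C_R$ reconciles correctly with the coordinate forgotten in the definition of $\mu S$. Beyond this, the proof is a functoriality argument for $\delta^{-1}$.
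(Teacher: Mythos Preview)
Your proposal is correct and follows essentially the same route as the paper: use the relation $\F\U{X_1}{\circ}\PR\cong\F\U{E_1}{\bu}\pr$ (via Proposition~\ref{conv-comp} / the change of variable along $\delta$) to reduce to Theorem~\ref{decomposition}, and pull back the distinguished triangle $\pr\to\K_\Delta\to\mathcal{Q}_R\xrightarrow{+1}$ by $\delta^{-1}$. Your Step~2, making explicit the identification $D_{Y\setminus C_R}(X)=D_{T^*E\setminus B}(E\x\R)$ and hence $\mathcal{D}_{C_R}(X)=\mathcal{D}_B(E\x\R)$ via the conification/section-at-$\zeta=1$ correspondence, is left implicit in the paper but is indeed the content needed.
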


\begin{proof}
For any object $\F\in\D(X_1)=\D(E_1\x\R_1)$ we have the following isomorphisms by \textbf{Proposition \ref{conv-comp}}:
$$
\F\U{X_1}{\circ}\PR=\F\U{X_1}{\circ}(\delta^{-1}\pr)\cong\F\U{E_1}{\bu}\pr.
$$ 

This shows that composition with $\PR$ sends objects in $\D(X)$ to objects in $\mathcal{D}_{C_R}(X)$. On the other hand, let $\mathscr{Q}_R$ be the pull-back $\delta^{-1}\mathcal{Q}_R$ and $\widetilde{\Delta}$ be the set $\{(\mathbf{q}_1,z_1,\mathbf{q}_2,z_2)|\mathbf{q}_1=\mathbf{q}_2\, , z_2\geq z_1 \}$ in $X_1\x X_2$. Then by applying the functor $\delta^{-1}$ on this distinguished triangle
$$
\pr\rightarrow\K_\Delta\rightarrow\mathcal{Q}_R\xrightarrow{+1}
$$

we get the following distinguished triangle:
$$
\PR\rightarrow \K_{ \widetilde{\Delta}}\rightarrow \mathscr{Q}_R \xrightarrow{+1}.
$$

Thus the object $\PR$ serves as projector from $\D(X)$ to $\mathcal{D}_{C_R}(X)$. 
\end{proof}

In fact, $\PR$ lives in $\mathcal{D}_{<0,>0}(X_1\x X_2)$. The triangulated category $\mathcal{D}_{<0,>0}(X_1\x X_2)$ can be identified with the quotient of $D(X_1 \x X_2)$ by the full subcategory of objects micro-supported on the closed cone  $T^*(\R_1\x\R_2)\setminus\{\zeta_1<0,\zeta_2>0\}$. In other words, an object $\G$ lives in $\mathcal{D}_{<0,>0}(X_1\x X_2)$ if and only if the natural morphism $\G\rightarrow \G\ast\K_{\{z_1\leq0,\,z_2\geq0\}}$ is an isomorphism. This is analogue to \textbf{Proposition \ref{*}} and can be generalized to closed proper cones in the cotangent fibers.

We would also like to remark that the composition with an object of $\mathcal{D}_{<0,>0}(X\x X)$ gives rise to an endofunctor of the category $\D(X)$. One might think symbolically that the kernels in $\mathcal{D}_{<0,>0}$ eats the input  $>0$ with its $<0$ and leaves another $>0$ as the output.

\vspace{2cm}

\section{Contact Isotopy Invariants}\label{invariant}
\vspace{0.8cm}

\subsection{Admissible Open Subsets in the Prequantization Space.}\ \\

In this section we  define a family of contact isotopy invariants for certain open subsets $U$ in the prequantization space $\R^{2n}\x \mathds{S}^1$. First, lift $U$ to an open set $\widetilde{U}$ in $\R^{2n}\x\R_z$. Here $\widetilde{U}$ is periodic in the sense that it satisfies $\mathbf{T}\widetilde{U}=\widetilde{U}$ where $\mathbf{T} : z\mapsto z+1$. Let $X$ denote $\R^n\x\R_z$ and denote the cotangent coordinate of $z$ by $\zeta$. Consider the conification of $\widetilde{U}$ in $T^*_{\zeta>0}(X)$, namely the conic set $C_U:=\{(\mathbf{q,p},z,\zeta)|\zeta>0,(\mathbf{q,p}/\zeta,z)\in\widetilde{U}\}$. 
$$
\begin{array}{clr}
\,C_U \subset& T^*_{\zeta>0}(X)  \\
\downarrow  &\qquad\downarrow  \\
\quad\widetilde{U}\subset & \R^{2n}\x\R  \\
\downarrow  &\qquad\downarrow  \\
\quad U \subset & \R^{2n}\x\mathds{S}^1
\end{array}.
$$
Let $D_{Y\setminus C_U}(X)\hookrightarrow \D(X)$ be the full subcategory of objects micro-supported outside $C_U$. 

\begin{defn}\label{adm}
We call the open set $U$ \textbf{admissible} if the above embedding has a left adjoint functor, and if there exists a projector kernel $\PU\in D(X\x X)$ whose composition with $\D(X)$ projects to the quotient category $\D(X)/D_{Y\setminus C_U}(X)\cong\mathcal{D}_{C_U}(X)$.
\end{defn}

\noindent\textbf{Example.} The contact ball $B_R\x\mathds{S}^1$ is admissible by \textbf{Proposition \ref{contactpr}}. The associated projector kernel is $\PR$.\\

We need to show that the admissibility of an open subset is preserved under Hamiltonian contactomorphism. Let $\Phi$ be a Hamiltonian contactomorphism with compact support. This means that $\Phi$ is given by a contact isotopy (with compact support)  $\Phi_s : \R^{2n}\x \mathds{S}^1\rightarrow\R^{2n}\x \mathds{S}^1$ where $s\in I$ and $I$ is an interval containing $[0,1]$, $\Phi_0=Id$ and $\Phi_1=\Phi$. Recall that in \textbf{Section \ref{tricat}} we lift those $\Phi_s$ as in the following diagram:
$$
\begin{array}{clr}
C_U \subset& T^*_{\zeta>0}(X)  \xrightarrow{\Phi_s}T^*_{\zeta>0}(X)\\
\downarrow  &\qquad\downarrow  \qquad\qquad\quad \downarrow \\
\widetilde{U}\subset & \R^{2n}\x\R \xrightarrow{\Phi_s} \R^{2n}\x\R \\
\downarrow  &\qquad\downarrow  \qquad\qquad \quad \downarrow \\
U \subset & \R^{2n}\x\mathds{S}^1\xrightarrow{\Phi_s}\R^{2n}\x\mathds{S}^1
\end{array}.
$$

So the lifting of $\Phi$ becomes a homogeneous Hamiltonian symplectomorphism of $ T^*_{\zeta>0}(X) $. To see how $\Phi$ comes into play in the triangulated category of sheaves on $ T^*_{\zeta>0}(X) $, we need to look for its sheaf quantization. Recall that we have defined the overall Lagrangian graph (\ref{overall})
$$
\Lambda = \{(s,-h_s(\Phi_s(y)),y^\mathbf{a},\Phi_s(y)) | s\in I,y\in Y^\mathbf{a}\} \subset T^*(I\x X\x X).
$$ 

Roughly speaking, the sheaf quantization of $\Phi$ is a sheaf micro-supported in $\Lambda$. In the papers of Guillermou-Kashiwara-Schapira \cite{GuKaSch} and Guillermou \cite{Gu}, the existence and uniqueness of sheaf quantization have been well-established. We restate their result here:

\begin{thm}\label{quantization}(\cite{GuKaSch} Proposition 3.2)
There exists an unique locally bounded sheaf $\mathscr{S}$ in $D(I\x X\x X) $ satisfying 
$$
\mathbf{(1)}  \qquad SS(\mathscr{S})\subset \Lambda \cup T^*_{I\x X\x X}(I\x X\x X).
$$
$$
\mathbf{(2)} \quad\forall s\in I,\, \mathscr{S}_s\circ{\mathscr{S}^{-1}_s}\cong{\mathscr{S}^{-1}_s}\circ\mathscr{S}_s\cong \mathscr{S}_0\cong \K_{\Delta_X}.
$$

Here $\mathscr{S}_s$ denotes the pull-back of $\mathscr{S}$ to $\{s\}\x X\x X$ and  $\mathscr{S}^{-1}_s$ is defined to be the object  \, $v^{-1}\underline{Rhom}(\mathscr{S}_s\, ;\,\omega_X \boxtimes \K_X)$, $v:(x,y)\mapsto(y,x)$. In \cite{Gu}(Theorem 16.3) it is proved that we can make $ SS(\mathscr{S})=\Lambda $ outside the zero-section.
\end{thm}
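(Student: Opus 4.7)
My plan is to prove uniqueness via non-characteristic propagation, construct $\mathscr{S}$ locally via generating functions for $\Lambda$, and glue the local pieces using uniqueness. For uniqueness, given two candidates $\mathscr{S},\mathscr{S}'$, form the time-dependent composition $\mathscr{K} := \mathscr{S}\U{X}{\circ}(\mathscr{S}')^{-1}$ in $D(I\x X\x X)$. Because both quantize the same isotopy, the composition $\Lambda\circ\Lambda^{-1}$ at each fixed $s$ collapses onto the conormal $T^*_{\Delta_X}(X\x X)$, and the $T^*I$-component $-h_s+h_s$ cancels. The functorial micro-support estimates then force $SS(\mathscr{K})\setminus 0\subset T^*_I I\x T^*_{\Delta_X}(X\x X)$, so $\mathscr{K}$ is locally constant in $s$ with values in sheaves supported on the diagonal. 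Combined with condition (2) at $s=0$, this forces $\mathscr{K}\cong\K_{I\x\Delta_X}$, hence $\mathscr{S}\cong\mathscr{S}'$.

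For local existence, near any $(s_0,y_0)$, after shrinking $I$, the Lagrangian $\Lambda$ is locally the graph of $dS$ for a generating function $S(s,q_1,q_2)$ built from the action integral $\int\mathbf{p}\,d\mathbf{q}-h_s\,ds$ of the lifted isotopy. Following Proposition \ref{rotation}, the candidate is the constant sheaf
\[
\mathscr{S}_{loc}=\K_{\{(s,q_1,t_1,q_2,t_2)\,:\,t_2-t_1+S(s,q_1,q_2)\geq 0\}}.
\]
The direct computation that established $SS(\s)\cap T^*_{k>0}\subset L$ in Proposition \ref{rotation} carries over verbatim, verifying condition (1); at $s=s_0$ the support degenerates onto $\{q_1=q_2,\,t_2\geq t_1\}$, matching the initial condition $\K_{\Delta_X}$ after passage through the $\mathcal{D}_{<0,>0}$ localization.

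To glue, cover $I\x X\x X$ by opens small enough that local quantizations exist on each. On overlaps, uniqueness (applied on a sub-interval on which the isotopy is small) yields canonical isomorphisms, which automatically satisfy the cocycle condition on triple overlaps, once again by uniqueness. This glues the local data into a global $\mathscr{S}$ with $\mathscr{S}_0\cong\K_{\Delta_X}$; local boundedness is automatic since only finitely many patches contribute on any relatively compact open. The strengthened conclusion $SS(\mathscr{S})=\Lambda$ outside the zero section from \cite{Gu} follows by involutivity of micro-supports: the inclusion is already an equality on a dense open because $\mathscr{S}_0$ genuinely has micro-support equal to $T^*_{\Delta_X}(X\x X)$, and a proper closed conic coisotropic subset of the Lagrangian $\Lambda$ cannot contain a dense open, so the inclusion is an equality along all of $\Lambda$.

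The main obstacle is gluing when $\{\Phi_s\}$ is not small: globally $\Lambda$ admits no single generating function, and caustics appear as $s$ increases. The resolution --- carried out in detail in \cite{GuKaSch} --- is to partition $I$ into subintervals on which local generating functions exist, compose the local quantizations via $\U{X}{\circ}$, and invoke uniqueness at each composition step to identify the composite with the sheaf prescribed by condition (2). The initial condition propagates automatically because each factor restricts at its left endpoint to the identity kernel $\K_{\Delta_X}$.
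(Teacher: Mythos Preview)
The paper does not give its own proof of this statement; it is quoted verbatim from \cite{GuKaSch} (with the sharpening $SS(\mathscr{S})=\Lambda$ attributed to \cite{Gu}). So there is nothing in the paper to compare against, and the question is whether your outline reproduces the argument of \cite{GuKaSch}.

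Your uniqueness argument is correct and is exactly the one in \cite{GuKaSch}: the fiberwise composition $\mathscr{S}\circ(\mathscr{S}')^{-1}$ has micro-support in $T^*_I I\times T^*_{\Delta_X}(X\times X)$, hence is locally constant in $s$ and pinned down by its value at $s=0$. Likewise, the global strategy of subdividing $I$ and composing local kernels is the one used there.

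The gap is in your local existence step. You assert that for $s$ near $s_0$ the Lagrangian $\Lambda$ is the graph of $dS$ for a classical generating function $S(s,q_1,q_2)$, and you invoke Proposition~\ref{rotation} as a model. But that proposition works only because the harmonic oscillator Hamiltonian has $\partial^2 H/\partial\mathbf{p}^2$ nondegenerate: this is precisely what makes $(\mathbf{q}_1,\mathbf{p}_1)\mapsto(\mathbf{q}_1,\mathbf{q}_2)$ a diffeomorphism for $0<a<\tfrac{\pi}{2}$ and allows $S_a(\mathbf{q}_1,\mathbf{q}_2)$ to exist. For a general homogeneous Hamiltonian isotopy the Hamiltonian $h_s$ is degree~$1$ in the fibers, so $\partial^2 h_s/\partial p^2$ is degree~$-1$ and typically degenerate; near the identity the graph $\Lambda_s$ projects onto a neighborhood of the diagonal $\Delta_X$, not diffeomorphically onto $X\times X$, and no function $S(s,q_1,q_2)$ on $X\times X$ with $\Lambda_s=\mathrm{graph}(dS_s)$ exists. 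Your sheaf $\K_{\{t_2-t_1+S\ge 0\}}$ is therefore undefined in exactly the regime where you need it, and the ``degeneration onto $\{q_1=q_2,\,t_2\ge t_1\}$'' you describe cannot be carried out as in Proposition~\ref{rotation}. The construction in \cite{GuKaSch} avoids this by a different local argument (not a $(q_1,q_2)$--type generating function), and your sketch does not supply a substitute. The final involutivity argument for $SS(\mathscr{S})=\Lambda$ is also too quick: the Kashiwara--Schapira involutivity theorem does not immediately say that a closed involutive subset of a smooth conic Lagrangian is open in it, and \cite{Gu} devotes real work to this point.
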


\begin{rmk}\label{shiftSS}
 In Guillermou \cite{Gu} the term sheaf quantization is named by the fact that, outside the zero-section we have $SS(\F\circ\mathscr{S}_s)=\Phi_s(SS(\F))$ for any sheaf $\F$ on $X$. 
\end{rmk}

\begin{rmk} The sheaf $\mathscr{S}$ has a representative in the left semi-orthogonal piece    $\mathcal{D}_{<0,>0}(I\x X\x X)$. 
\end{rmk}

With this sheaf quantization $\mathscr{S}$ we can characterize admissibility under Hamiltonian contactomorphism $\Phi$. 

\begin{prop}\label{conj} If  $U$ is admissible then $\Phi(U)$ is admissible as well. In fact, for any $s\in I$ we can set $\mathscr{P}_{\Phi_s(U)}\cong {\mathscr{S}^{-1}_s}\circ\PU\circ \mathscr{S}_s$.
\end{prop}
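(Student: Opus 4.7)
The plan is to prove admissibility of $\Phi_s(U)$ by conjugating the projector $\PU$ with the autoequivalence of $\D(X)$ induced by the sheaf quantization $\mathscr{S}$. First set $T_s(-):=(-)\circ\mathscr{S}_s$ and $T_s^{-1}(-):=(-)\circ\mathscr{S}_s^{-1}$ as endofunctors of $\D(X)$. By condition $\mathbf{(2)}$ of \textbf{Theorem \ref{quantization}} one has $\mathscr{S}_s\circ\mathscr{S}_s^{-1}\cong\mathscr{S}_s^{-1}\circ\mathscr{S}_s\cong\K_{\Delta_X}$, so $T_s$ and $T_s^{-1}$ are mutually inverse autoequivalences of $\D(X)$. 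By \textbf{Remark \ref{shiftSS}} we have $SS(T_s\F)=\Phi_s(SS(\F))$ outside the zero section, and the $\R_{>0}$-homogeneity of the lifted isotopy $\Phi_s:T^*_{\zeta>0}X\to T^*_{\zeta>0}X$ from \textbf{Section \ref{manifold}} gives $\Phi_s(C_U)=C_{\Phi_s(U)}$. Combining these facts, $T_s$ carries objects micro-supported outside $C_U$ to objects micro-supported outside $C_{\Phi_s(U)}$, and the same argument applied to $T_s^{-1}$ yields an equivalence $D_{Y\setminus C_U}(X)\simeq D_{Y\setminus C_{\Phi_s(U)}}(X)$. Since left semi-orthogonal complements are characterized by $\mathrm{Hom}$-vanishing in $\D(X)$, $T_s$ likewise induces $\mathcal{D}_{C_U}(X)\simeq\mathcal{D}_{C_{\Phi_s(U)}}(X)$.

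Next I would transport the adjunction defining $\PU$ along these equivalences. For $\F\in\mathcal{D}_{C_{\Phi_s(U)}}(X)$ and $\G\in\D(X)$,
$$\mathrm{Hom}_{\D}(\F,\G\circ\mathscr{S}_s^{-1}\circ\PU\circ\mathscr{S}_s)\cong\mathrm{Hom}_{\D}(\F\circ\mathscr{S}_s^{-1},\G\circ\mathscr{S}_s^{-1}\circ\PU)\cong\mathrm{Hom}_{\D}(\F\circ\mathscr{S}_s^{-1},\G\circ\mathscr{S}_s^{-1})\cong\mathrm{Hom}_{\D}(\F,\G),$$
using in order the $T_s$-adjunction, the defining adjunction of $\PU$ (valid since $\F\circ\mathscr{S}_s^{-1}\in\mathcal{D}_{C_U}(X)$), and the $T_s$-adjunction once more. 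This exhibits composition with $\mathscr{S}_s^{-1}\circ\PU\circ\mathscr{S}_s$ as the right adjoint to the embedding $\mathcal{D}_{C_{\Phi_s(U)}}(X)\hookrightarrow\D(X)$, which simultaneously proves admissibility of $\Phi_s(U)$ and identifies $\mathscr{P}_{\Phi_s(U)}\cong\mathscr{S}_s^{-1}\circ\PU\circ\mathscr{S}_s$.

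The main subtlety I expect is verifying the micro-support transport rule rigorously in this relative setting: by \textbf{Theorem \ref{quantization}} one has $SS(\mathscr{S})=\Lambda$ only outside the zero section, so one must confirm that no spurious contributions along $\{\zeta=0\}$ spoil the criterion of lying in $D_{Y\setminus C_U}(X)$, a condition phrased entirely in $T^*_{\zeta>0}X$. Choosing the representative of $\mathscr{S}$ in $\mathcal{D}_{<0,>0}(I\x X\x X)$ keeps the relevant cotangent directions in the positive half-space, and the functorial estimates for $SS$ under composition collected in the Appendix should then handle the remaining bookkeeping.
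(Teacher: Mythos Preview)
Your proposal is correct and follows essentially the same route as the paper: both arguments use that $(-)\circ\mathscr{S}_s$ is an autoequivalence of $\D(X)$ which, by \textbf{Remark \ref{shiftSS}}, carries $D_{Y\setminus C_U}(X)$ onto $D_{Y\setminus C_{\Phi_s(U)}}(X)$, and then transport the semi-orthogonal decomposition along this equivalence. The only cosmetic difference is that the paper conjugates the entire distinguished triangle $\PU\to\K_{\widetilde{\Delta}}\to\mathscr{Q}_U\xrightarrow{+1}$ and checks separately that the conjugated $\mathscr{Q}_U$ lands in $D_{Y\setminus C_{\Phi_s(U)}}(X)$ and the conjugated $\PU$ in its left orthogonal, whereas you verify the universal property of the right adjoint directly via the chain of $\mathrm{Hom}$-isomorphisms; both are equivalent packagings of the same content, and your worry about zero-section contributions is harmless since the defining condition for $D_{Y\setminus C_U}(X)$ lives entirely in $T^*_{\zeta>0}X$.
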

\begin{proof}
First, by \textbf{Definition \ref{adm}} for any $\F\in\D(X)$ we have $\F\circ\mathscr{Q}_U\in D_{Y\setminus C_U}(X)$. By further composition with $\mathscr{S}_s$ and \textbf{Remark \ref{shiftSS}} we have 
$$
\F\circ(\mathscr{Q}_U\circ\mathscr{S}_s)\cong(\F\circ\mathscr{Q}_U)\circ\mathscr{S}_s\in D_{Y\setminus \Phi_s(C_U)}(X).
$$

Since composition with ${\mathscr{S}^{-1}_s}$ presents an auto-equivalence of $\D(X)$, we have the  composed functor
$$
\D(X)\overset{\circ{\mathscr{S}^{-1}_s}}{\cong}\D(X)\xrightarrow{\circ (\mathscr{Q}_U\circ\mathscr{S}_s)} D_{Y\setminus \Phi_s(C_U)}(X).
$$

Second, for any object $\F\in\D(X)$ and $\G\in D_{Y\setminus \Phi_s(C_U)}(X)$, again by \textbf{Remark \ref{shiftSS}} we have $\G\circ{\mathscr{S}^{-1}_s}\in D_{Y\setminus C_U}(X)$. So 
$$
Rhom(\F\circ(\PU\circ\mathscr{S}_s) ; \G)
$$
$$
\cong Rhom((\F\circ\PU)\circ\mathscr{S}_s ; \G\circ({\mathscr{S}^{-1}_s}\circ\mathscr{S}_s))
$$
$$
\cong Rhom((\F\circ\PU)\circ\mathscr{S}_s ; (\G\circ{\mathscr{S}^{-1}_s})\circ\mathscr{S}_s)
$$
$$
\overset{\circ{\mathscr{S}^{-1}_s}  }{\rightarrow} Rhom(  \F\circ\PU ; \G\circ{\mathscr{S}^{-1}_s}).
$$

On the other hand we have quasi-inverse morphism:
$$
Rhom(\F\circ\PU ; \G\circ{\mathscr{S}^{-1}_s} )  \overset{\circ\mathscr{S}_{s}  }{\rightarrow}{Rhom((\F\circ\PU)\circ\mathscr{S}_s ; (\G\circ{\mathscr{S}^{-1}_s})\circ\mathscr{S}_s)}.
$$

Thus by the projector property of $\PU$ we get
$$
Rhom(\F\circ(\PU\circ\mathscr{S}_s) ; \G)\cong Rhom(  \F\circ\PU ; \G\circ{\mathscr{S}^{-1}_s})\cong 0.
$$

So we obtain a functor to the left semi-orthogonal complement 
$$
\D(X)\overset{\circ{\mathscr{S}^{-1}_s}}{\cong}\D(X)\xrightarrow{\circ (\PU\circ\mathscr{S}_s)}\quad    ^\perp(D_{Y\setminus \Phi_s(C_U)}(X)).
$$

Similarly, the composition with ${\mathscr{S}^{-1}_s}\circ\mathscr{Q}_U\circ \mathscr{S}_s $ gives a functor to the right semi-orthogonal complement 
$$
\D(X)\rightarrow (D_{\Phi_s(C_U)}(X))^\perp.
$$

Finally, from the distinguished triangle with respect to $U$
$$
\PU\rightarrow\K_{\widetilde{\Delta}}\rightarrow \mathscr{Q}_U\xrightarrow{+1}.
$$

It is easy to see that after conjugation we have another triangle:
$$
{\mathscr{S}^{-1}_s}\circ\PU\circ \mathscr{S}_s\rightarrow ({\mathscr{S}^{-1}_s}\circ\K_{\widetilde{\Delta}}\circ\mathscr{S}_s\cong\K_{\widetilde{\Delta}})\rightarrow{\mathscr{S}^{-1}_s}\circ\mathscr{Q}_U\circ \mathscr{S}_s \xrightarrow{+1}.
$$

As a result, the kernel $\mathscr{P}_{\Phi_s(U)}:={\mathscr{S}^{-1}_s}\circ\PU\circ \mathscr{S}_s$ represents the projector with respect to the (hence admissible) open set $\Phi_s(U)$. Here is an illustration diagram

\xymatrix{   
 &&\D(X) \ar[d]_{\PU} \ar@/^/[r]^{\mathscr{S}_s} &\D(X) \ar[d]^{\mathscr{P}_{\Phi_s(U)}} \ar@/^/[l]^{{\mathscr{S}^{-1}_s}} \\
&&\mathcal{D}_{C_U}(X) \ar[r]_{\mathscr{S}_s}  & \mathcal{D}_{\Phi_s(C_U)}(X)   \ar@{=}[r] & \mathcal{D}_{C_{\Phi_s(U)}}(X).
}

\end{proof}

\subsection{Cyclic Actions and Contact Invariants}\label{cyclic}\ \\

Pick a positive integer $N$ and consider the cyclic action on the space $(X\x X)^N$, 
$$
\sigma : (x_1,x_2,\cdots,x_{2N-1},x_{2N})\mapsto (x_{2N},x_1,x_2,\cdots,x_{2N-1}).
$$

\noindent\textbf{Assumption.} From now on, we set the integer $N$ to be a prime number and let the ground field $\K$ be the finite field of $N$ elements.\\

This subsection is aimed to stress the necessity of cyclic group actions associated to $\sigma:(x_1,\cdots,x_{2N-1},x_{2N})\mapsto(x_{2N},x_1,\cdots,x_{2N-1})$. Let me explain the motivation here. In order to homogenize contact isotopy $\{\Phi_s\}$ on $\R^{2n}\x\mathds{S}^1$, we lift the admissible open set $U\subset\R^{2n}\x\mathds{S}^1$ to its covering $\widetilde{U}\subset \R^{2n}\x\R$  so that it becomes a quotient of $T^*_{\zeta>0}(X)$ where $X$ denotes $\R^{2n}\x\R_z$. Then the machinery of micro-support comes into play and defines the projector $\PU\in D(X\x X)$. What has been done so far only allows $\PU$ to see the real line $\R_z$ but not the circle $\mathds{S}^1$. The idea is to pick a large enough integer $N$ to approximate $\mathds{S}^1$ by the finite cyclic group $\sfrac{\Z}{N\Z}$. Here the infinitesimal circle action is approximated by the cyclic generator $\sigma^2$ of $\sfrac{\Z}{N\Z}$. 

To be precise we need to adopt the terminology of \textsl{equivariant derived category}. For the general theory of equivariant derived categories we refer the reader to the book by Bernstein and Lunts \cite{BL}. Let $\mathcal{G}$ be a group acting on a topological space $Z$. Consider the following diagram of spaces

\begin{multicols}{2}

\xymatrix{
\qquad\mathcal{G}\x\mathcal{G}\x Z \ar@/^1.4pc/[r]^{d_0} \ar[r]_{d_1}  \ar@/_1.4pc/[r]_{d_2} & \mathcal{G}\x Z\ar@/^1pc/[r]^{d_0} \ar@/_1pc/[r]_{d_1} & Z \ar[l]_{s_0} 
}

$ 
\begin{cases} 
s_0(z)=(e,z)\\
d_0(g_1,\cdots,g_n,z)=(g_2,\cdots,g_n,g_1^{-1}z)\\
d_i(g_1,\cdots,g_n,z)=(g_1,\cdots,g_ig_{i+1},\cdots,g_n,z)\\
d_n(g_1,\cdots,g_n,z)=(g_1,\cdots,g_{n-1},z).
\end{cases}
$
\end{multicols}

A $\mathcal{G}$-equivariant sheaf on $Z$ is a pair $(\F,\tau)$ where $\F\in Sh(Z)$ and $\tau:d^*_1\F\cong d^*_0\F$ is an isomorphism satisfying the cocycle conditions $d^*_2\tau\circ d^*_0\tau=d^*_1\tau$ and $s^*_0\tau=id_{\F}$. Equivariant sheaves form an abelian category $Sh_{\mathcal{G}}(Z)$. Now let $\mathcal{G}$ be the cyclic group $\sfrac{\Z}{N\Z}$ acting on the space $Z=(X\x X)^N$. Since $\sfrac{\Z}{N\Z}$ is finite, it follows that $Sh_{\sfrac{\Z}{N\Z}}((X\x X)^N)$ has enough injectives and its derived category is equivalent to the bounded below $\sfrac{\Z}{N\Z}$-equivariant derived category $D^+_{\sfrac{\Z}{N\Z}}((X\x X)^N)$.

Let $\mathscr{T}$ be the constant sheaf supported on the shifted diagonal, namely 
$$
\mathscr{T}:=\K_{\{\mathbf{q}_1=\mathbf{q}_2, \, z_1-z_2=1\}} \in D(X\x X).
$$ 
Consider the object $\sigma_*\mathscr{T}^{\boxtimes N}$. It is just the constant sheaf supported on the set $\{\q_{2j}=\q_{2j+1}, z_{2j}-z_{2j+1}=1,j\in\sfrac{\Z}{N\Z}\}$, which is obviously an $\sfrac{\Z}{N\Z}$-equivariant sheaf.  On the other hand, the object $\mathscr{P}_{U}$ is a complex(no matter which complex we choose from its class) on which the group $\sfrac{\Z}{N\Z}$ acts trivially, thus the object $\mathscr{P}_{U}^{\boxtimes N}$ can be represented by a complex consists of objects of $Sh_{\sfrac{\Z}{N\Z}}((X\x X)^N)$.

We now formulate the invariant as follows:

\begin{defn} Given an admissible open set $U$ and a prime integer $N$. The contact isotopy invariant of $U$ is an object of $D^+_{\sfrac{\Z}{N\Z}}(pt)$ defined by 
$$
\C(U):= {Rhom}_{(X\x X)^N}(\PU^{\boxtimes N};\sigma_*\mathscr{T}^{\boxtimes N}).
$$
\end{defn}

Here the functor $Rhom$ is taken over the category $D^+_{\sfrac{\Z}{N\Z}}((X\x X)^N)$. Let $B(\sfrac{\Z}{N\Z})$ be the classifying space of the group $\sfrac{\Z}{N\Z}$. The equivariant derived category $D^+_{\sfrac{\Z}{N\Z}}(pt)$ is equivalent to the full subcategory of $D^+(B(\sfrac{\Z}{N\Z}))$ consisting of all complexes with constant cohomology sheaves of $\K$-vector spaces.  One may also define $\C(U)$ as an object of the derived category of the modules over the group algebra $\K[\sfrac{\Z}{N\Z}]$ and consider its "quotient" in  $D^+_{\sfrac{\Z}{N\Z}}(pt)$.

We claim that $\C$ is indeed a contact isotopy invariant for admissible open subsets of $\R^{2n}\x\mathds{S}^1$. 
Now we can state the invariance property under contact isotopies.

\begin{thm}\label{inv} \textbf{(1)} For any admissible open set $U$ and any $s\in I$ there is an isomorphism between invariants $\C(U)\cong\C(\Phi_s(U))$, and \textbf{(2)} any embedding $V\overset{i}{\hookrightarrow} U$ of one admissible into another induces an morphism $\C(U)\xrightarrow{i^*}\C(V)$ naturally.
\end{thm}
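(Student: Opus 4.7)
Both statements will be deduced from an interpretation of $\C(U)$ as a cyclic trace: using the adjunction between external product $\boxtimes$ and kernel composition $\circ$, the expression $Rhom_{(X\x X)^N}(\PU^{\boxtimes N},\sigma_*\mathscr{T}^{\boxtimes N})$ reorganizes as the trace of the $N$-fold cyclic composition of $\PU$ alternated with the unit-shift kernel $\mathscr{T}$, and a cyclic trace is manifestly invariant under conjugation by any kernel that commutes with $\mathscr{T}$.

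For \textbf{(1)}, I start from Proposition \ref{conj}, which gives $\mathscr{P}_{\Phi_s(U)}\cong\mathscr{S}_s^{-1}\circ\PU\circ\mathscr{S}_s$. Substituting this into the cyclic-trace picture replaces each of the $N$ slots by a conjugated copy of $\PU$. The crucial geometric input is that $\mathscr{T}$ commutes with $\mathscr{S}_s$ in $\D(X\x X)$: the lifted Hamiltonian flow on $\R^{2n}\x\R$ satisfies $\Phi_s\circ\mathbf{T}=\mathbf{T}\circ\Phi_s$, so by the uniqueness clause of Theorem \ref{quantization} the sheaf quantizations $\mathscr{T}\circ\mathscr{S}_s$ and $\mathscr{S}_s\circ\mathscr{T}$ must agree. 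Sliding each $\mathscr{S}_s$ past the adjacent $\mathscr{T}$ then yields
$$
(\mathscr{T}\circ\mathscr{P}_{\Phi_s(U)})^{N}\cong\mathscr{S}_s^{-1}\circ(\mathscr{T}\circ\PU)^N\circ\mathscr{S}_s,
$$
and the outermost $\mathscr{S}_s^{-1}$ and $\mathscr{S}_s$ are absorbed by cyclicity of the trace, producing the desired isomorphism $\C(\Phi_s(U))\cong\C(U)$.

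For \textbf{(2)}, the containment $V\subset U$ yields $C_V\subset C_U$ and hence the tautological inclusion $D_{Y\setminus C_U}(X)\subset D_{Y\setminus C_V}(X)$. Passing to left semi-orthogonal complements gives a full embedding $\mathcal{D}_{C_V}(X)\hookrightarrow\mathcal{D}_{C_U}(X)$, and composing the corresponding right-adjoint projectors gives, at the kernel level, a canonical morphism $\mathscr{P}_V\to\PU$ in $D(X\x X)$. Applying $(-)^{\boxtimes N}$ followed by $Rhom(\,\cdot\,,\sigma_*\mathscr{T}^{\boxtimes N})$ produces the desired morphism $i^*:\C(U)\to\C(V)$ in $D^+_{\sfrac{\Z}{N\Z}}(pt)$, with naturality in $i$ inherited from the functoriality of $(-)^{\boxtimes N}$ and of the semi-orthogonal decompositions.

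The main obstacle lies in part \textbf{(1)}: the telescoping and cyclic rearrangement must be carried out not just in $\D(X\x X)$ but in the $\sfrac{\Z}{N\Z}$-equivariant derived category $D^+_{\sfrac{\Z}{N\Z}}(pt)$. Since the cyclic generator $\sigma^2$ permutes the $N$ external copies, each intermediate isomorphism must be manifestly compatible with this action---automatic for manipulations that are symmetric in the $N$ slots, but requiring care at the cyclic seam where the closure happens. A secondary bookkeeping issue is to rigorously justify the cyclic-trace reformulation of $Rhom$ in the locally bounded framework described in the introduction, and to verify throughout that kernel composition interacts correctly with $\boxtimes$ and with the left semi-orthogonal projection to $\mathcal{D}_{<0,>0}$.
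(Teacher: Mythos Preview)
Your Part \textbf{(2)} is essentially the paper's argument: from $V\subset U$ one gets $\mathscr{Q}_U\circ\mathscr{P}_V\cong 0$, hence $\mathscr{P}_U\circ\mathscr{P}_V\cong\mathscr{P}_V$, and composing the unit $\mathscr{P}_V\to\K_{\widetilde{\Delta}}$ with $\mathscr{P}_U$ produces $\mathscr{P}_V\to\mathscr{P}_U$; then apply $(-)^{\boxtimes N}$ and $Rhom(-,\sigma_*\mathscr{T}^{\boxtimes N})$.

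For Part \textbf{(1)} your ingredients are correct and coincide with the paper's (Proposition~\ref{conj}, and the $\Z$-equivariance of $\mathscr{S}_s$ from the uniqueness in Theorem~\ref{quantization} giving $\mathscr{T}\circ\mathscr{S}_s\cong\mathscr{S}_s\circ\mathscr{T}$), but your architecture differs. You collapse $Rhom_{(X\x X)^N}(\PU^{\boxtimes N},\sigma_*\mathscr{T}^{\boxtimes N})$ to a cyclic trace of $(\mathscr{T}\circ\PU)^N$ in a single copy of $X\x X$ and then invoke cyclic invariance of the trace to kill the outer $\mathscr{S}_s^{\pm1}$. The obstacle you flag is real: once you pass to the trace, the $\sfrac{\Z}{N\Z}$-action is no longer the honest permutation of external factors but the cyclic symmetry of an iterated composition, and verifying that the telescoping and the cyclic move respect that action is genuinely delicate.

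The paper avoids this entirely by never leaving $(X\x X)^N$. It first records the compatibility $(\F\circ\G)^{\boxtimes N}\cong\F^{\boxtimes N}\circ\G^{\boxtimes N}$ as kernels on the product, so that
\[
\mathscr{P}_{\Phi_s(U)}^{\boxtimes N}\cong(\mathscr{S}_s^{-1})^{\boxtimes N}\circ\PU^{\boxtimes N}\circ\mathscr{S}_s^{\boxtimes N}.
\]
By adjunction the conjugation passes to the second argument of $Rhom$:
\[
\C(\Phi_s(U))\cong Rhom\bigl(\PU^{\boxtimes N};\ \mathscr{S}_s^{\boxtimes N}\circ\sigma_*\mathscr{T}^{\boxtimes N}\circ(\mathscr{S}_s^{-1})^{\boxtimes N}\bigr),
\]
and the $\Z$-equivariance of $\mathscr{S}$ gives
\[
\mathscr{S}_s^{\boxtimes N}\circ\sigma_*\mathscr{T}^{\boxtimes N}\cong\sigma_*\mathscr{T}^{\boxtimes N}\circ\mathscr{S}_s^{\boxtimes N}
\]
directly as an isomorphism of $\sfrac{\Z}{N\Z}$-equivariant objects on $(X\x X)^N$ (both sides are visibly invariant under the permutation action). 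The conjugation then cancels on the nose, with no appeal to trace cyclicity. This is shorter and keeps the equivariant structure manifest at every step, so it dissolves precisely the obstacle your plan has to fight through.
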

\begin{proof}
\textbf{Proof of (1).} We first investigate the relationship between external tensor product and composition operation. Let $\F$ and $\G$ be objects in $D(X_1\x X_2)$. For $1\leq j\leq N$, let $p_j: X_{2j-1}\x X_j'\x X_{2j}\rightarrow X_{2j-1}\x X_j'$, $q_j: X_{2j-1}\x X_j'\x X_{2j}\rightarrow  X_j'\x X_{2j}$ and $r_j : X_{2j-1}\x X_j'\x X_{2j}\rightarrow  X_{2j-1}\x X_{2j}$ be projections and let $\pi_j:\displaystyle\Pi^{N}_{j=1} (X_{2j-1}\x X_{2j})\rightarrow X_{2j-1}\x X_{2j}$ be projection to the $j$-th factor. Notes that all notations $X_j$ and $X_j'$ are just labels identical to the same space $X$. We have
$$
(\F\U{X}{\circ}\G)^{\boxtimes N}=\bigotimes_{j=1}^N \pi_j^{-1} r_{j!}(p_j^{-1}\F\otimes q_j^{-1}\G)
$$
$$
\cong (\Pi_{j=1}^N r_j)_! [( \displaystyle\prod_{j=1}^N p_j  )^{-1}\F^{\boxtimes N} \otimes ( \displaystyle\prod_{j=1}^N q_j  )^{-1}\G^{\boxtimes N}]
\cong \F^{\boxtimes N}  \U{\Pi X_j'}{\circ} \G^{\boxtimes N}.
$$

So for $U$ admissible we can set ${\mathscr{S}^{-1}_s}\in D(X_1\x X')$, $\PU\in D(X'\x X'')$ and $\mathscr{S}_s\in D(X''\x X_2)$, then in $D^+_{\sfrac{\Z}{N\Z}}((X_1\x X_2)^N)$ we have
$$
\mathscr{P}_{\Phi_s(U)}^{\boxtimes N} = ({\mathscr{S}^{-1}_s}\U{X'}{\circ}\PU\U{X''}{\circ}\mathscr{S}_s)^{\boxtimes N}\cong {\mathscr{S}^{-1}_s}^{\boxtimes N}\U{\Pi X_j'}{\circ}\PU ^{\boxtimes N} \U{\Pi X_j''}{\circ}\mathscr{S}_s^{\boxtimes N}.
$$

Hence 
\begin{equation}\label{Eugene}
Rhom(\mathscr{P}_{\Phi_s(U)}^{\boxtimes N};\sigma_*\mathscr{T}^{\boxtimes N})\cong Rhom({\mathscr{S}^{-1}_s}^{\boxtimes N}\U{\Pi X_j'}{\circ}\PU ^{\boxtimes N}\U{\Pi X_j''}{\circ}\mathscr{S}_s^{\boxtimes N};  \sigma_*\mathscr{T}^{\boxtimes N} )
\end{equation}
$$
\cong Rhom(  \PU ^{\boxtimes N};  \mathscr{S}_s^{\boxtimes N}\U{\Pi X_{2j-1}}{\circ} (\sigma_*\mathscr{T}^{\boxtimes N} )\U{\Pi X_{2j}}{\circ} {\mathscr{S}^{-1}_s}^{\boxtimes N} ),
$$
here $\mathscr{S}_s\in D(X'\x X_1)$ and ${\mathscr{S}^{-1}_s}\in D(X_2\x X'')$. Note that $\sigma_*\mathscr{T}^{\boxtimes N} $ is a constant sheaf supported by the set
$$
\{\mathbf{q}_{2j}=\mathbf{q}_{2j+1},\, z_{2j}-z_{2j+1}= 1,\, j\in \sfrac{\Z}{N\Z}  \}
$$ 
and by uniqueness of sheaf quantization (\textbf{Theorem \ref{quantization}}), we know that $\mathscr{S}$ is $\Z$-equivariant on its $\R_z$-component, so we have
\begin{equation}\label{Eric}
\mathscr{S}_s^{\boxtimes N}\U{\Pi X_{2j-1}}{\circ}  (\sigma_*\mathscr{T}^{\boxtimes N})\cong (\sigma_*\mathscr{T}^{\boxtimes N})\U{\Pi X_j''}{\circ}  \mathscr{S}_s^{\boxtimes N} \, ,
\end{equation}

(\ref{Eugene}) and (\ref{Eric}) induce an isomorphism
$$
Rhom(  \PU ^{\boxtimes N};  \mathscr{S}_s^{\boxtimes N}\circ (\sigma_*\mathscr{T}^{\boxtimes N} )\circ {\mathscr{S}^{-1}_s}^{\boxtimes N} )\cong Rhom(  \PU ^{\boxtimes N};\sigma_*\mathscr{T}^{\boxtimes N} ),
$$
which reads
$$
\C(\Phi_s(U))\cong  \C(U).
$$
\\
\noindent\textbf{Proof of (2).} Let $V\overset{i}{\hookrightarrow} U$ be two admissible open subsets in $\R^{2n}\x \mathds{S}^1$. Consider the distinguished triangle in $\mathcal{D}_{<0,>0}(X\x X)$:
$$
\mathscr{P}_U \rightarrow  \K_{ \widetilde{\Delta}} \rightarrow\mathscr{Q}_U\xrightarrow{+1}
$$
and apply "$\circ\mathscr{P}_V$" on it, we get 
$$
\mathscr{P}_U\circ\mathscr{P}_V\rightarrow\mathscr{P}_V\rightarrow\mathscr{Q}_U\circ\mathscr{P}_V\xrightarrow{+1}.
$$
\indent Since $V\subset U$, we have $D_{Y\setminus C_U}(X)\subset D_{Y\setminus C_V}(X)$. Hence $\mathscr{Q}_U\circ\mathscr{P}_V\cong0$, and the above distinguished triangle reads
\begin{equation}\label{UVV}
\mathscr{P}_U\circ\mathscr{P}_V\cong\mathscr{P}_V.
\end{equation}
\indent On the other hand, by admissibility of $V$ we have the morphism $\mathscr{P}_V\rightarrow \K_{ \widetilde{\Delta}}$. After composing with $\mathscr{P}_U$ we get 
\begin{equation}\label{UVU}
\mathscr{P}_U\circ\mathscr{P}_V\rightarrow \mathscr{P}_U.
\end{equation}
By (\ref{UVV})(\ref{UVU}) there is a morphism $i:\mathscr{P}_V\cong \mathscr{P}_U\circ\mathscr{P}_V\rightarrow \mathscr{P}_U$. It is clear that this morphism extends to $\mathscr{P}_V^{\boxtimes N}\xrightarrow{i^{\boxtimes N}} \mathscr{P}_U^{\boxtimes N}$, hence gives rise to a morphism in $D^+_{\sfrac{\Z}{N\Z}}((X\x X)^N)$:
$$
\C(U)=Rhom(\PU^{\boxtimes N};\sigma_*\mathscr{T}^{\boxtimes N})\xrightarrow{i^*} Rhom(\mathscr{P}_V^{\boxtimes N};\sigma_*\mathscr{T}^{\boxtimes N})=\C(V).
$$

Naturality comes from the following commutative diagram between projectors:  let $W\overset{j}{\hookrightarrow} V\overset{i}{\hookrightarrow}  U$ be admissible open subsets,\\

\xymatrix{
&\mathscr{P}_W \ar[r]^{\cong} & \mathscr{P}_V\circ\mathscr{P}_W \ar[r]^{j}  \ar[d]^{\cong} &\mathscr{P}_V\cong \PU\circ\mathscr{P}_V \ar[r]^{\qquad i} & \mathscr{P}_U\\
&&(\PU\circ\mathscr{P}_V)\circ\mathscr{P}_W\ar[r]^{\cong} &\PU\circ(\mathscr{P}_V\circ\mathscr{P}_W)\ar[r]^{\quad\cong} & \PU\circ \mathscr{P}_W\ar[u]^{i\circ j}.
}  
\quad\\

We obtain the desired commutative diagram
\xymatrix{
&\C(U)\ar[r]^{i^*} \ar[dr]_{(i\circ j)^*}  & \C(V) \ar[d]^{j^*}\\
&& \C(W).
}

\end{proof}

When $U$ is the contact ball $B_R\x\mathds{S}^1$, the corresponding projector $\mathscr{P}_R$ encodes Hamiltonian rotations. We will see later that it gives a good enough approximation to the Hamiltonian loop space in $\R^{n}$ after we unwrap the invariant $\C(B_R\x\mathds{S}^1)$. Here the cyclic action shifts the time parameter of the loop.

\vspace{1cm}

\subsection{Invariants for Contact Balls}\ \\

In this subsection we make an computational approach to the contact isotopy invariants $\C(U)$ for  $U=\R^{2n}\x\mathds{S}^1$ and $U=B_R\x\mathds{S}^1$ . Let $\bar{\delta}:\Pi_{j=1}^N E_{2j-1}\x E_{2j}\x \R_{2j-1}\x\R_{2j}\rightarrow \Pi_{j=1}^N E_{2j-1}\x E_{2j}\x \R_{t_j}$ be the difference map on each pair $(z_{2j-1},z_{2j})\xrightarrow{\bar{\delta}} z_{2j}- z_{2j-1}$ . Let us begin with
$$
\sigma_*\mathscr{T}^{\boxtimes N}=\K_{\{\mathbf{q}_{2j}=\mathbf{q}_{2j+1},\, z_{2j}-z_{2j+1}= 1,\, j\in \sfrac{\Z}{N\Z}  \}}.
$$
 
We introduce new variables $t_j=z_{2j}-z_{2j-1}$. If $z_{2j}-z_{2j+1}= 1$ for each $j$, we have $\Sigma t_j = \Sigma (z_{2j}-z_{2j-1)}= \Sigma (z_{2j}-z_{2j+1)}=N$. Conversely, any  $t_j$'s satisfing $\Sigma t_j=N$ can be rewritten in the form $t_j=z_{2j}-z_{2j-1}$ such that $z_{2j}-z_{2j+1}= 1$ for each $j$. Hence we have the following sequence of isomorphisms  in $D((E\x E\x\R)^N)$ after applying $\bar{\delta}$ on $\sigma_*\mathscr{T}^{\boxtimes N}$ :
\begin{equation}\label{Peter}
\bar{\delta}_*\sigma_*\mathscr{T}^{\boxtimes N}=\K_{\{\mathbf{q}_{2j}=\mathbf{q}_{2j+1},\,\Sigma t_j= N,\, j\in \sfrac{\Z}{N\Z}  \}}
\end{equation}
$$
=\bar{\Delta}_* \K_{E^N\x \{\Sigma t_j=N\}}
$$
$$
= \bar{\Delta}_*{\pi_\Sigma}^{-1} \K_{E^N \x \{t=N \}}
$$
$$
\cong \bar{\Delta}_*{\pi_\Sigma}^{!} \K_{E^N \x \{t=N \}}[1-N]
$$
$$
= \bar{\Delta}_*{\pi_\Sigma}^{!}  {\pi}^{-1}\K_{t=N} [1-N]
$$
$$
\cong \bar{\Delta}_*{\pi_\Sigma}^{!}  {\pi}^{!}\K_{t=N} [-nN] [1-N],
$$
here $\pi:E^N\x\R\rightarrow\R$ is the projection along $E^N$, $\pi_\Sigma:E^N\x\R^N\rightarrow E^N\x\R$  takes summation of all $t_j$ to the new variable $t$, and $\bar{\Delta}$ is the (shifted)diagonal embedding of $E^N$ into $E^{2N}$ defined by $E_j \overset{\Delta}\hookrightarrow E_{2j-2}\x E_{2j-1} $ for each $j\in \sfrac{\Z}{N\Z} $ .\\

Therefore for $U=\R^{2n}\x\mathds{S}^1$ we have 

\begin{lem}\label{whole}
$\C(\R^{2n}\x\mathds{S}^1)\cong\K[(1+n)(1-N)]$ in $D(\K[\sfrac{\Z}{N\Z}])$. Here $\K[\sfrac{\Z}{N\Z}]$ acts on $\K$ trivially.
\end{lem}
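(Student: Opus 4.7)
First, I would identify the projector. Since $U=\R^{2n}\x\mathds{S}^1$ has conification $C_U = T^*_{\zeta>0}X$, the null subcategory $D_{Y\setminus C_U}(X)$ is zero and $\mathscr{P}_U$ is the identity composition kernel $\K_{\widetilde{\Delta}}$, with $\widetilde{\Delta}=\{\q_1=\q_2,\,z_1\leq z_2\}\subset X\x X$. The relation $\K_{\widetilde{\Delta}}=\bar\delta^{-1}\K_\Delta$ (already built into the definition of $\mathscr{P}_R$ via $\delta^{-1}$) extends to $\mathscr{P}_U^{\boxtimes N}=\bar\delta^{-1}\K_\Delta^{\boxtimes N}$. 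Applying the adjunction $\bar\delta^{-1}\dashv R\bar\delta_*$ externally and feeding in \eqref{Peter} on the target rewrites the invariant as
$$
\C(\R^{2n}\x\mathds{S}^1) \,=\, R\Gamma Rhom\bigl(\K_\Delta^{\boxtimes N},\ \bar\Delta_*\pi_\Sigma^!\pi^!\K_{\{t=N\}}\bigr)[1-N-nN].
$$

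Next I would use the adjunction $\bar\Delta^{-1}\dashv\bar\Delta_*$ to slide $\bar\Delta$ to the other side. The core geometric input is the computation of $\bar\Delta^{-1}\K_\Delta^{\boxtimes N}$ on $E^N\x\R^N$: the cyclic diagonal embedding sends $(\q_j)_j$ to $(x_{2j-1}=\q_j,\,x_{2j}=\q_{j+1})$, so pulling back the constraints $x_{2j-1}=x_{2j}$ of $\K_\Delta^{\boxtimes N}$ forces $\q_j=\q_{j+1}$ cyclically, collapsing all of them to the \emph{full} diagonal $\Delta_E^{\mathrm{full}}\cong E$. Thus $\bar\Delta^{-1}\K_\Delta^{\boxtimes N}=\K_{\Delta_E^{\mathrm{full}}\x\R^N_{\geq 0}}$. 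Meanwhile $\pi$ and $\pi_\Sigma$ are oriented submersions of relative dimensions $nN$ and $N-1$, so $\pi_\Sigma^!\pi^!\K_{\{t=N\}}=\K_{E^N\x\{\Sigma t_j=N\}}[nN+N-1]$. The two shifts cancel, $[nN+N-1]+[1-N-nN]=[0]$, and the computation reduces to
$$
\C(\R^{2n}\x\mathds{S}^1) \,=\, R\Gamma Rhom_{E^N\x\R^N}\bigl(\K_{\Delta_E^{\mathrm{full}}\x\R^N_{\geq 0}},\ \K_{E^N\x\{\Sigma t_j=N\}}\bigr).
$$

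At this point Kunneth splits the problem into an $E^N$-factor and an $\R^N$-factor. For the $E^N$-factor, $\Delta_E^{\mathrm{full}}\hookrightarrow E^N$ is a closed oriented codim-$n(N-1)$ embedding, so $Rhom(\K_{\Delta_E^{\mathrm{full}}},\K_{E^N})=\K_{\Delta_E^{\mathrm{full}}}[-n(N-1)]$, whose global sections over the contractible source give $\K[n(1-N)]$. For the $\R^N$-factor, restricting to the hyperplane $\{\Sigma t_j=N\}\cong\R^{N-1}$ intersects $\R^N_{\geq 0}$ in the standard $(N-1)$-simplex $\Sigma$, so the factor equals $R\Gamma Rhom(\K_\Sigma,\K_{\R^{N-1}})=R\Gamma(\Sigma,j^!\K_{\R^{N-1}})=R\Gamma(\Sigma,\omega_\Sigma)[1-N]$. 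Since $\Sigma$ is compact and contractible, Verdier duality $R\Gamma(\Sigma,\omega_\Sigma)\cong R\Gamma_c(\Sigma,\K)^\vee$ yields $\K$, so this factor is $\K[1-N]$. Multiplying produces $\K[n(1-N)+(1-N)]=\K[(n+1)(1-N)]$.

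Triviality of the $\sfrac{\Z}{N\Z}$-action is then automatic: the answer is a single one-dimensional $\K$-module, so the action factors through $\mathrm{Aut}_\K(\K)=\mathds{F}_N^\ast$, a group of order $N-1$ coprime to $N$, forcing the representation to be trivial. The main obstacle in executing the plan is the careful bookkeeping of the various cohomological shifts through the two successive adjunctions; in particular, the identification of $\bar\Delta^{-1}\K_\Delta^{\boxtimes N}$ must land on the \emph{full} diagonal and not merely the pairwise one, and it is precisely the cyclic nature of $\bar\Delta$ that makes this happen and produces the crucial $n(1-N)$ contribution to the final shift.
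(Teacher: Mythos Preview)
Your proof is correct and follows essentially the same approach as the paper's own argument. Both identify $\mathscr{P}_U=\K_{\widetilde{\Delta}}=\bar\delta^{-1}\K_\Delta$, use the adjunctions $\bar\delta^{-1}\dashv R\bar\delta_*$ and $\bar\Delta^{-1}\dashv\bar\Delta_*$ together with \eqref{Peter}, and compute $\bar\Delta^{-1}\K_\Delta^{\boxtimes N}$ as the constant sheaf on the full diagonal times the orthant. The only divergence is in the last elementary step: the paper continues with the adjunctions $R\pi_{\Sigma!}\dashv\pi_\Sigma^!$ and $R\pi_!\dashv\pi^!$ to reduce to $Rhom(\K_{t\geq 0}[-n],\K_{t=N}[1-(n+1)N])$ on $\R_t$, whereas you stop on $E^N\times\R^N$, split by K\"unneth, and evaluate the two factors separately (invoking Verdier duality on the simplex for the $\R^N$-part). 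These are two equivalent ways of doing the same bookkeeping; your shift count and your triviality argument via $|\mathrm{Aut}_\K(\K)|=N-1$ being coprime to $N$ are both valid.
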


\begin{proof} 
By (\ref{Peter}),
$\C(\R^{2n}\x\mathds{S}^1)
=Rhom( {\K_{\widetilde{\Delta}}}^{\boxtimes N};\sigma_*\mathscr{T}^{\boxtimes N}  )\cong Rhom( \bar{\delta}^{-1} {\K_{\Delta}}^{\boxtimes N};\sigma_*\mathscr{T}^{\boxtimes N}  )
$
$$
\cong Rhom ({\K_{\Delta}}^{\boxtimes N};  \bar{\delta}_*\sigma_*\mathscr{T}^{\boxtimes N}) 
$$
$$
\cong Rhom ({\K_{\Delta}}^{\boxtimes N};  \bar{\Delta}_* {\pi_\Sigma}^! {\pi}^! \K_{t= N}  [1-(n+1)N])
$$
$$
\cong Rhom(  R\pi_{!}  R\pi_{\Sigma!}  \bar{\Delta}^{-1} ({\K_{\Delta}}^{\boxtimes N} ) ;\K_{t= N}[1-(n+1)N])
$$
$$
\cong Rhom ( \K_{t\geq0}[-n] ; \K_{t= N} [1-(n+1)N])  
$$
$$
\cong\K[(1+n)(1-N)].
$$

It is easy to see that the cyclic group acts trivially on both $\K_{t\geq0}[-n]$ and $\K_{t= N} [1-(n+1)N]$  and hence gives trivial action on $\C(\R^{2n}\x\mathds{S}^1)$.

\end{proof}

When $U$ is the contact ball $B_R\x\mathds{S}^1$ we replace $\K_{\widetilde{\Delta}}$ by $\mathscr{P}_{R}$ and the same adjunctions gives
$$
\C(B_R\x\mathds{S}^1)=Rhom(\mathscr{P}_{R}^{\boxtimes N};\sigma_*\mathscr{T}^{\boxtimes N} )
$$
$$
\cong Rhom(R\pi_{!} R\pi_{\Sigma!}\bar{\Delta}^{-1}(\pr^{\boxtimes N});\K_{t= N} [1-(n+1)N] ).
$$

Define 
\begin{equation}\label{Paul}
\F_R:=R\pi_{!} R\pi_{\Sigma!}\bar{\Delta}^{-1}(\pr^{\boxtimes N})
\end{equation} 
then we can rewrite $\C(B_R\x\mathds{S}^1)\cong Rhom(\F_R;\K_{t= N} [1-(n+1)N])$. Now we are going to unwrap the stalks of $\F_R$ by computing cohomology with compact supports of certain free loop space with an cyclic action on it.

\begin{lem}\label{fr}
Given $T\geq0$, choose an integer $M$ such that $M>\f{4T}{\pi N R^2}$. For $A\in G$ satisfying $0\leq -A \leq \f{T}{NR^2}$ and $\mathbf{q}_{j,k}\in E^{NM}$, define the quadratic form $\Omega(A) = \sum_{j = 1}^N \sum_{k = 1}^M S_{\sfrac{A}{M}}(\mathbf{q}_{j,k},\,\mathbf{q}_{j,k+1})$. Define $\mathcal{W}=\{(A,\{\mathbf{q}_{j,k}\})| \Omega(A)\geq0  \}$ a subset of $G\x E^{NM}$. Denote $\rho:G\x E^{NM}\rightarrow G$ the projection onto $G$ and let $\mathcal{E}= R\rho_! \K_\mathcal{W}$. One has 
$$
\F_R|_T \cong R\Gamma_c(\{0\leq -A \leq \f{T}{NR^2}\} ; \mathcal{E}).
$$
The cyclic action on the set $\mathcal{W}$ is given by $\mathbf{q}_{i,j}\mapsto\mathbf{q}_{i-1,j}$. The cyclic action on $\mathcal{E}$ is induced from $\mathcal{W}$ by computing the equivariant cohomology of the pre-image of $\rho$.
\end{lem}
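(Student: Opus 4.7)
The plan is to unwind
$\F_R|_T=R\pi_!R\pi_{\Sigma!}\bar\Delta^{-1}(\pr^{\boxtimes N})|_T$
directly from the explicit model for $\pr$. Combining the definition (\ref{block}) with the subdivision formula (\ref{explicit form}), one has for $|a|/M<\pi/2$
\[ \pr \cong R\rho_!\K_{\mathcal{V}},\qquad \mathcal{V}=\Bigl\{a\leq 0,\;t+aR^2+\textstyle\sum_{k=1}^{M}S_{\sfrac{a}{M}}(\mathbf{q}_k,\mathbf{q}_{k+1})\geq 0\Bigr\}, \]
where $\rho:G\x E^{M+1}\x\R_t\to E\x E\x\R_t$ projects onto $(\mathbf{q}_1,\mathbf{q}_{M+1},t)$. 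The hypothesis $M>\f{4T}{\pi NR^2}$ is chosen so that $|a/M|<\pi/4$ throughout the range of $a$ contributing to the stalk at $T$, making this local description valid everywhere needed.

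Forming $\pr^{\boxtimes N}$ gives $N$ independent slots with coordinates $(a_j,\mathbf{q}_{j,1},\ldots,\mathbf{q}_{j,M+1},t_j)$; pulling back by $\bar\Delta$ identifies $\mathbf{q}_{j,M+1}=\mathbf{q}_{j+1,1}$ cyclically, leaving $NM$ free spatial points $\mathbf{q}_{j,k}\in E^{NM}$. Proper base change commutes the proper pushforwards $\pi_{\Sigma!}$ and $\pi_!$ past $R\rho_!$, so evaluating at the stalk at $T$ yields
\[ \F_R|_T \cong R\Gamma_c\Bigl\{(a_j,\mathbf{q}_{j,k},t_j):\;a_j\leq 0,\;t_j+a_jR^2+\Omega_j\geq 0,\;\textstyle\sum_j t_j=T\Bigr\} \]
with $\Omega_j=\sum_k S_{\sfrac{a_j}{M}}(\mathbf{q}_{j,k},\mathbf{q}_{j,k+1})$. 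After the substitution $s_j:=t_j+a_jR^2+\Omega_j$, the $t_j$-fibre becomes the closed simplex $\{s_j\geq 0,\sum_j s_j=L\}$ with $L:=T+R^2\sum a_j+\sum\Omega_j$, compact and contractible exactly when $L\geq 0$; integrating it out contributes $\K$ in degree zero and leaves
\[ \F_R|_T \cong R\Gamma_c\{(a_j,\mathbf{q}_{j,k}):a_j\leq 0,\;L\geq 0\}. \]

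The final step is the collapse from the $N$-dimensional angle set to the single $A$-axis of the lemma. I would introduce $A:=\tfrac{1}{N}\sum a_j$ and deviations $\delta_j:=a_j-A$ constrained by $\sum\delta_j=0$, and construct a non-characteristic retraction $\delta_j\to 0$ within the domain $|a_j/M|<\pi/2$ (guaranteed globally by the $M$-bound). The key input is that the action function $\sum S_{a_j/M}$ is critical at the diagonal $a_j=A$ with non-degenerate Hessian in the $\delta$-variables --- a consequence of the positive-definite harmonic-oscillator Hamiltonian $H=\mathbf{q}^2+\mathbf{p}^2$ underlying the generating function $S$ --- so the non-characteristic deformation lemma trivialises the $(N-1)$-dimensional transverse $R\Gamma_c$-factor, reducing the computation to a one-parameter integral in $A$ whose fibre sheaf is $R\rho_!\K_{\{\Omega(A)\geq 0\}}$. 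The diagonal slice $\{a_j=A\}$ contributes precisely on the range $0\leq -A\leq \f{T}{NR^2}$, giving the claimed formula. The cyclic shift of the $N$ slots $j\mapsto j-1$ descends to $\mathbf{q}_{j,k}\mapsto\mathbf{q}_{j-1,k}$ on the surviving coordinates, providing the $\sfrac{\Z}{N\Z}$-equivariant structure on $\mathcal{E}$. The main obstacle is this non-characteristic reduction: one must verify that the homotopy in the $\delta$-direction is globally non-characteristic for the defining inequalities and that the residual boundary condition collapses cleanly to $\Omega(A)\geq 0$ --- this is exactly where the quantitative $M$-bound and the specific quadratic structure of the generating function $S$ become essential.
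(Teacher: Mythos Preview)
Your first two moves---expressing $\pr$ via iterated local generating functions and then integrating out the $(t_j)$-simplex---are sound and parallel to what the paper does (the paper performs an analogous simplex integration, but in the $a_j$-variables rather than the $t_j$-variables). The genuine gap is in your last step, the reduction from the $N$ angle variables $(a_1,\ldots,a_N)$ to the single diagonal $A$.

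Your claimed mechanism---that $\sum_{j,k}S_{a_j/M}(\q_{j,k},\q_{j,k+1})$ is critical at the diagonal $a_j=A$ with non-degenerate Hessian in the $\delta$-variables---is false in general. By Hamilton--Jacobi, $\partial_a S_a(\q,\q')=-H$ evaluated along the trajectory, so $\partial_{a_j}\bigl(\sum_k S_{a_j/M}\bigr)=-\tfrac{1}{M}\sum_k H^{(j)}_k$, the (minus) average energy of block $j$. For a generic configuration $\{\q_{j,k}\}$ these block energies differ, hence the diagonal is \emph{not} a critical point of the action restricted to $\sum\delta_j=0$, and no non-characteristic deformation of the kind you describe is available. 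Relatedly, your residual condition on the diagonal slice reads $T+NR^2A+\Omega(A)\geq 0$ (a single coupled inequality), not the product condition $0\leq -A\leq T/(NR^2)$ together with $\Omega(A)\geq 0$ stated in the lemma; getting from one to the other is a separate Morse-theoretic step (the only critical value of the homogeneous quadratic $\Omega(A)$ is zero) that you have not supplied.

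The paper bypasses both issues by a structural observation rather than a deformation. The sheaf quantization of rotations satisfies the semigroup law $\s|_{a_1}\underset{E}{\bu}\s|_{a_2}\cong\s|_{a_1+a_2}$ (built into its construction, equation (\ref{explicit form})). Hence the object $R\pi_{\Sigma!}\bar\Delta^{-1}\bigotimes_j\pi_j^{-1}\s_j$ is \emph{literally} a pullback $\alpha_\Sigma^{-1}\mathscr{G}$ along the sum map $\alpha_\Sigma:G^N\to G$ (equation (\ref{Pisa})). After that, $R\alpha_{\Sigma!}$ passes through by the projection formula, and the remaining $a_j$-integration is over the compact simplex $\{a_j\leq 0,\ \sum a_j=NA\}$, contributing only $\K$ in degree zero. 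The decoupling into the two separate conditions then comes from the factorization $R\pi_{E!}\mathscr{G}\cong\mathcal{E}\boxtimes\K_{\{t_1\geq 0\}}$, which is exactly the homogeneity-of-$\Omega$ argument. In short: replace your non-characteristic deformation by the semigroup property of $\s$, and the reduction to a single $A$ becomes a one-line application of the projection formula.
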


\begin{proof}
First, let us unwrap $\pr$ in terms of the sheaf quantization $\s$ and its convolutions (or compositions). By (\ref{P}) we have
$$
\pr:=\widehat{\s}\U{b}{\circ}\K_{\{b<R^2\}}=(\s\U{a}{\bu}\K_{\{t+ab\geq0\}}[1])\U{b}{\circ}\K_{\{b<R^2\}}
$$
$$
\cong\s\U{a}{\bu}(\K_{\{t+ab\geq0\}}\U{b}{\circ}\K_{\{b<R^2\}})[1]\cong\s\U{a}{\bu}\K_{\{ (a,t)|a\leq 0 \, ,\, t+aR^2\geq 0\}}.
$$

To see $\pr^{\boxtimes N}$, for each $j\in\sfrac{\Z}{N\Z}$ let $\pi_j$ be the projection from $(E\x E)^N$ to $E_{2j-1}\x E_{2j}$ and let $\s_j\in\D(G_j\x E_{2j-1}\x E_{2j}\x\R_{t_j})$ be the sheaf quantization of Hamiltonian rotations. Also let $\pi_G$ be the projection along $G^N$ and $\bar{s}$ takes addition $\R_t^N\x\R_t^N\rightarrow \R_t^N$. We have the following isomorphisms:
$$
\pr^{\boxtimes N}\cong(\bigotimes_{j=1}^N \pi_j^{-1}\s_j)\U{G^N}{\bu}\K_{\{ \forall j,\, \sfrac{-t_j}{R^2}\leq a_j\leq0 \}}\cong R\pi_{G!}R\bar{s}_! [(\bigotimes_{j=1}^N \pi_j^{-1}\s_j)\boxtimes \K_{\{ \forall j,\, \sfrac{-t_j}{R^2}\leq a_j\leq0 \}}].
$$

Furthermore let $\R_t\x\R_t\xrightarrow{s}\R_t$ denote the summation. Therefore we can write $\F_R$ (\ref{Paul}) as 
\begin{equation}\label{Patrick}
\F_R:=R\pi_{!} R\pi_{\Sigma!}\bar{\Delta}^{-1}(\pr^{\boxtimes N})\cong R\pi_! R\pi_{G!} Rs_![(R\pi_{\Sigma !}(\bar{\Delta}^{-1} \bigotimes_{j=1}^N \pi_j^{-1}\s_j))\boxtimes \K_{\{\forall j, \, a_j\leq0,\, t\geq (-\Sigma a_j)R^2\}} ].
\end{equation}

Let me describe the object $R{\pi_\Sigma}_!\bar{\Delta}^{-1}{\bigotimes_{j=1}^N} \pi_j^{-1}\s_j$. Given any tuple $\vec{a}=(a_1,\cdots,a_j,\cdots,a_N)$ in $ G^N$ in which each $a_j$ is non-positive, there exists an positive integer $M$ such that for any $j$ we have $-\f{\pi}{4}<\f{a_j}{M}\leq0$. We can divide $[a_j,0]$ into subintervals of length less than $\f{a_j}{M}$ by setting $[a_j,0]=\bigcup_{k=1}^M[\f{k}{M}a_j,\f{k-1}{M}a_j]$. Recall that on each subinterval $[\f{k}{M}a_j,\f{k-1}{M}a_j] $ the object $\s_j$ has the expression in terms of the following local generating function (\ref{GF})
$$
S_a(\mathbf{q,q'})=\f{1}{2\tan(2a)}(\mathbf{q}^2+\mathbf{q'}^2)-\f{1}{\sin(2a)}\mathbf{qq'}.
$$

Iterated convolutions of (\ref{explicit form}) from $1$ to $M$ gives us
\begin{equation}\label{Peru}
\s_j|_{a_j}=R\rho_{j!}\K\{(\mathbf{q}_{j,1},\cdots,\mathbf{q}_{j,k},\cdots,\mathbf{q}_{j,M+1},t_j)|\, t_j +\sum_{k  = 1}^M  S_{\sfrac{a_j}{M}}(\mathbf{q}_{j,k},\mathbf{q}_{j,k+1})\geq0 \}
\end{equation}
here $\rho_{j}$ denotes the projection: $(\mathbf{q}_{j,1},\cdots,\mathbf{q}_{j,k},\cdots,\mathbf{q}_{j,M+1},t_j)\mapsto ( \mathbf{q}_{j,1},\mathbf{q}_{j,M+1},t_j)$, 
and we identify the variables $\mathbf{q}_{j,1}=\mathbf{q}_{2j-1}$ and $\mathbf{q}_{j,M+1}=\mathbf{q}_{2j}$ such that $\s_j|_{a_j}\in\D(E_{2j-1}\x E_{2j}\x\R_{t_j}  )$. 

Observe that the morphism $\bar{\Delta}^{-1}$ gives further identification $\mathbf{q}_{2j}=\mathbf{q}_{2j+1}$(i.e., $\mathbf{q}_{j,M+1}=\mathbf{q}_{j+1,1}$) for all $j\in\sfrac{\Z}{N\Z}$. Denote 
\begin{equation}\label{phoenix}
\mathcal{W}(\vec{a}) =\{(\{\mathbf{q}_{j,k}\},t)|\, t+ \displaystyle\sum_{j = 1}^N \sum_{k = 1}^M    S_{\sfrac{a_j}{M}}(\mathbf{q}_{j,k},\,\mathbf{q}_{j,k+1})\geq0 \}
\end{equation}
the subset of $E^{NM}\x\R_t$, and $\rho_E$ denotes the projection: $E^{NM}\rightarrow E^N$ mapping from $\{\mathbf{q}_{j,k}\}$ to $\{\mathbf{q}_j\}$.  We see that the functor $R\pi_{\Sigma !}$ is given by $\sum_{j=1}^N$ in the definition of $\mathcal{W}(\vec{a})$. Therefore by (\ref{Peru}) and (\ref{phoenix}) we get
\begin{equation}\label{Pakistan}
R{\pi_\Sigma}_!\bar{\Delta}^{-1}{\bigotimes_{j=1}^N} \pi_j^{-1}(\s_j|_{a_j})\cong R\rho_{E!}\K_{\mathcal{W}(\vec{a})}.
\end{equation}

\begin{figure}[ht] 
\includegraphics[height=8cm]{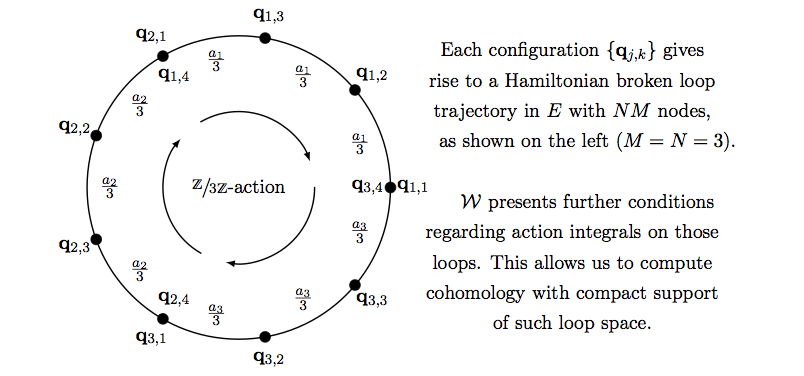}
\end{figure}

Therefore pointwisely $R\pi_{\Sigma !}(\bar{\Delta}^{-1} \bigotimes_{j=1}^N \pi_j^{-1}\s_j)$ describes the concatenation of sheaf quantization of Hamiltonian rotations. Again by the argument around  (\ref{explicit form}) the sheaf quantization should only depend on $\sum a_j$. Namely, let $\alpha_\Sigma : G^N \rightarrow G$ be the summation $\vec{a}\mapsto \Sigma a_j$, then the object $(R\pi_{\Sigma !}(\bar{\Delta}^{-1} \bigotimes_{j=1}^N \pi_j^{-1}\s_j))$ is a pull-back by $\alpha_\Sigma$. So there is an object $\mathscr{G}\in D_{\sfrac{\Z}{N\Z}}(G\x E^N\x\R_t)$ such that 
\begin{equation}\label{Pisa}
R\pi_{\Sigma !}(\bar{\Delta}^{-1} \bigotimes_{j=1}^N \pi_j^{-1}\s_j)\cong\alpha_\Sigma^{-1} \mathscr{G}.
\end{equation}

We introduce a new variable $A:=\sfrac{\Sigma a_j}{N}$. Denote the subset 
$$
\mathcal{W}(A)=\{t+ \displaystyle\sum_{j = 1}^N \sum_{k = 1}^M    S_{\sfrac{A}{M}}(\mathbf{q}_{j,k},\,\mathbf{q}_{j,k+1})\geq0 \}\subset E^{NM}\x \R.
$$
By (\ref{Pakistan}) and (\ref{Pisa}), pointwisely we have 
\begin{equation}\label{Pandora}
\mathscr{G}|_A\cong R\rho_{E!} \K_{\mathcal{W}(A)}.
\end{equation} 

Let $\pi_A$ be the projection along $A$. We can decompose the projection $\pi_G$ into the composition of $\pi_A$ and $\alpha_\Sigma$. Plug those expressions into $\F_R$ (\ref{Patrick}) we get  
\begin{equation}\label{Paris}
\F_R\cong R\pi_! R\pi_{G!} Rs_![(R\pi_{\Sigma !}(\bar{\Delta}^{-1} \bigotimes_{j=1}^N \pi_j^{-1}\s_j))\boxtimes \K_{\{\forall j, \, a_j\leq0,\, t\geq (-\Sigma a_j)R^2\}} ]
\end{equation}
$$
\cong R\pi_!R\pi_{A!}  Rs_!R\alpha_{\Sigma !} [\alpha_\Sigma^{-1}(\mathscr{G})\boxtimes  \K_{\{\forall j, \, a_j\leq0,\, t\geq (-\Sigma a_j)R^2\}}]
$$
$$
\cong R\pi_!R\pi_{A!}  Rs_! [\mathscr{G} \boxtimes   R\alpha_{\Sigma !} \K_{\{\forall j, \, a_j\leq0,\, t\geq (-\Sigma a_j)R^2\}}]
$$
$$
\cong R\pi_!R\pi_{A!}  Rs_! [\mathscr{G} \boxtimes   \K_{\{t_2\geq -NAR^2\geq 0\}}].
$$

To unwrap $\F_R$ to more explicit form, we write $\mathscr{G}\in D_{\sfrac{\Z}{N\Z}}(G\x E^N\x \R_1)$ and $ \K_{\{t_2\geq -NAR^2\geq 0\}}\in \D(G\x\R_2)$ and the summation $s: \R_1\x\R_2\rightarrow \R_t$. Consider the projection along $E^N$ by $\pi_E$ $:G\x E^N\x\R_1\rightarrow G\x\R_1$, we claim there exist $\mathcal{E}\in D_{\sfrac{\Z}{N\Z}}(G)$ and $\mathcal{K}\in \D(\R_1)$ such that 
\begin{equation}\label{Panama}
R\pi_{E!}\mathscr{G}\cong \mathcal{E} \boxtimes \mathcal{K}.
\end{equation}

 To be more precise, let us fix any $A=\sfrac{\Sigma a_j}{N}$ and recall (\ref{Pandora}) that we have $\mathscr{G}|_A\cong R\rho_{E!} \K_{\mathcal{W}(A)}\in\D(E^N\x\R_1)$ where $\mathcal{W}(A)=\{t_1+ \displaystyle\sum_{j = 1}^N \sum_{k = 1}^M    S_{\sfrac{A}{M}}(\mathbf{q}_{j,k},\,\mathbf{q}_{j,k+1})\geq0 \}\subset E^{NM}\x \R_1$ and $\rho_E$ denotes the projection: $E^{NM}\rightarrow E^N$ mapping from $\{\mathbf{q}_{j,k}\}$ to $\{\mathbf{q}_j\}$. Let 
 \begin{equation}\label{Omega} 
\Omega(A) = \displaystyle\sum_{j = 1}^N \sum_{k = 1}^M S_{\sfrac{A}{M}}(\mathbf{q}_{j,k},\,\mathbf{q}_{j,k+1})
 \end{equation}
be the homogeneous quadratic form on $E^{NM}$, we see that the only critical value of $-\Omega(A)$ is zero. Hence $R\pi_{E!}\mathscr{G}|_{A}\cong R\pi_{E!}R\rho_{E!} \K_{\mathcal{W}(A)}$ is supported by points greater or equal to zero, which is independent of the choice of $A$. This shows the existence of the factor $\mathcal{E}$, and we can set another factor $\mathcal{K}$ to be the constant sheaf  $\K_{\{t_1\geq0\}}$. 

Let $\rho_t:G\x E^{NM}\x\R\xrightarrow{\rho_E} G\x E^N\x\R \xrightarrow{\pi_E} G\x\R$. Since (\ref{Pandora}) $\mathscr{G}|_A\cong R\rho_{E!}\K_{\mathcal{W}(A)}\cong R\pi_{E!}R\rho_{E!} \K_{\mathcal{W}(A)}$,  pointwisely we have
$$
\mathcal{E}|_A\cong R\pi_{E!}R\rho_{E!} \K_{\mathcal{W}(A)}|_{t=0}= R\rho_{t!}\K_{\mathcal{W}(A)}|_{t=0}.
$$

Denote $\rho=\rho_t|_{t=0}:G\x E^{NM}\rightarrow G$. Define $\mathcal{W}=\{(A,\{\mathbf{q}_{j,k}\})| \Omega(A)\geq0  \}$ a subset of $G\x E^{NM}$. Then we have 
$$
\mathcal{E}\cong R\rho_! \K_\mathcal{W}.
$$

Let us continue on $\F_R$. Recall that $R\pi_{E!}\mathscr{G}\cong \mathcal{E} \boxtimes \K_{\{t_1\geq0\}}$. Therefore by (\ref{Paris}) and (\ref{Panama}) in $\D(\R_t)$ we have 
$$
\F_R\cong R\pi_!R\pi_{A!}  Rs_! [\mathscr{G} \boxtimes   \K_{\{t_2\geq -NAR^2\geq 0\}}]
\cong  R\pi_{A!} (\mathcal{E} \boxtimes \K_{\{t\geq -NAR^2\geq 0\}}). 
$$

Therefore for any given $t=T\geq0$ we get (here $R\Gamma$ computes $\sfrac{\Z}{N\Z}$-equivariant cohomology)
$$
\F_R|_T \cong R\Gamma_c(\{0\leq -A \leq \f{T}{NR^2}\} ; \mathcal{E}).
$$
When $T\geq0$ is given, there is a uniform integer $M$ such that $0\leq \f{-A}{M} < \f{\pi}{4}$, namely we can pick any integer $M>\f{4T}{\pi NR^2}$.
\end{proof}

Notice that from $\mathcal{E}\cong R\rho_! \K_\mathcal{W}$ we have 

\begin{equation}\label{Canada}
\F_R|_T \cong R\Gamma_c(\{0\leq -A \leq \f{T}{NR^2}\} ; \mathcal{E})\cong  R\Gamma_{c}(\rho^{-1}(\{0\leq -A \leq \f{T}{NR^2}\})\cap \mathcal{W}).
\end{equation}

This way we see that the invariant $\C(B_R\x\mathds{S}^1)\cong Rhom(\F_R;\K_{t= N} [1-(n+1)N])$ is given by computing compactly supported equivariant cohomology of the space $\rho^{-1}(\{0\leq -A \leq \f{T}{NR^2}\})\cap \mathcal{W}$. Moreover, we have 

\begin{lem}\label{FR}
$\F_R|_T$ is the compactly supported equivariant cohomology of the space $\R^{D+1}$ where $\R^{D+1}\hookrightarrow E^{NM}$ and $D=2n\lceil\f{T}{\pi R^2}\rceil-n-1$. The cyclic group action on $\R^{D+1}$ is induced from the action on $E^{NM}=E\x\mathds{C}^{\f{NM-1}{2}n}$ by complex multiplication by the exponents of $\omega^M$ on the complex coordinates respectively. Here $\omega$ stands for the primitive $MN$-th  root of unity.
\end{lem}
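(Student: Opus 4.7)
The plan is to start from the identification $\F_R|_T \cong R\Gamma_c(\rho^{-1}(\{0\le -A\le T/(NR^2)\})\cap\mathcal{W})$ provided by (\ref{Canada}), and to diagonalize the cyclic-shift invariant quadratic form $\Omega(A)$ on $E^{MN}$ by discrete Fourier transform. After relabeling the pairs $(j,k)$ by a single cyclic index $\ell=(j-1)M+k\in\Z/MN\Z$ and expanding $\mathbf{q}_\ell=\sum_{m'}\hat{\mathbf{q}}_{m'}\omega^{\ell m'}$ with $\omega=e^{2\pi i/(MN)}$, a Plancherel computation using the explicit formula for $S_a$ yields
\begin{equation*}
\Omega(A)=MN\sum_{m'\in\Z/MN\Z}\frac{\cos(2A/M)-\cos(2\pi m'/MN)}{\sin(2A/M)}\,|\hat{\mathbf{q}}_{m'}|^2.
\end{equation*}
Since $\sin(2A/M)<0$ on $A\in[-T/(NR^2),0)$, the $m'$-th eigenvalue is positive precisely when $|m'|<N|A|/\pi$. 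Hence for $|A|\in(\pi(m'-1)/N,\pi m'/N)$ the positive subspace $V^+(A)$ has real dimension $(2m'-1)n$, and at the endpoint $|A|=T/(NR^2)$ with $m=\lceil T/(\pi R^2)\rceil$ this dimension equals $(2m-1)n=D+1$.

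Next I would push forward along $\pi_A:\mathcal{W}_T\to[-T/(NR^2),0]$. At non-critical $A$, the fiber is the quadratic cone $\{|q^+|_\Omega\ge|q^-|_\Omega\}$; projection onto $V^+(A)$ is a proper map with contractible closed-ball fibers, so the fiberwise $R\Gamma_c$ equals $\K[-\dim V^+(A)]$. At the critical values $A=-\pi m'/N$ the form acquires a $2n$-dimensional kernel $V^0(A)$, and the same computation applied to $V^{\ge 0}(A)=V^+(A)\oplus V^0(A)$ gives $\K[-(2m'+1)n]$. Consequently $R(\pi_A)_!\K_{\mathcal{W}_T}$ is a constructible sheaf $\mathcal{S}$ on $[-T/(NR^2),0]$ which is constant equal to $\K[-(2m'+1)n]$ on each left-closed right-open stratum $-A\in[\pi m'/N,\pi(m'+1)/N)$ for $0\le m'\le m-2$, and constant equal to $\K[-(D+1)]$ on the closed top stratum $-A\in[\pi(m-1)/N,T/(NR^2)]$.

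Then I would compute $R\Gamma_c([-T/(NR^2),0],\mathcal{S})$ stratum by stratum. Every half-open interval $[a,b)$ has $R\Gamma_c([a,b),\K)=0$ because its one-point compactification is contractible, so the lower strata contribute nothing; only the closed top stratum survives and yields
\begin{equation*}
\F_R|_T\;\cong\;R\Gamma_c\bigl([\pi(m-1)/N,T/(NR^2)],\K[-(D+1)]\bigr)\;\cong\;\K[-(D+1)]\;\cong\;R\Gamma_c(\R^{D+1}).
\end{equation*}
For the equivariance, the generator $\sigma$ acts by $\mathbf{q}_{j,k}\mapsto\mathbf{q}_{j-1,k}$, i.e., shifts the cyclic index $\ell$ by $M$ steps; under DFT this acts on the $m'$-th Fourier mode by multiplication by $\omega^{Mm'}$. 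All preceding constructions are $\sfrac{\Z}{N\Z}$-equivariant, so the action descends onto the surviving positive subspace $V^+(A_*)=E\oplus\bigoplus_{1\le m'\le m-1}\mathds{C}^n$, acting trivially on the $m'=0$ factor $E$ and by $\omega^{Mm'}$ on the $m'$-th complex factor, which matches the description in the statement.

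The main obstacle is verifying the constructible structure of $\mathcal{S}$ at the interior critical values $A=-\pi m'/N$, where $\Omega(A)$ is degenerate and $p(A)$ jumps by $2n$. One must check locally that $R(\pi_A)_!\K_{\mathcal{W}_T}$ really jumps upper-semi-continuously, so that $\mathcal{S}$ is constant on the left-closed right-open strata and not merely on the open ones. This requires a family version of the fiberwise $R\Gamma_c$ computation using the DFT diagonalization above, essentially checking that as $-A$ crosses $\pi m'/N$ from below, the newly-positive $\pm m'$ Fourier modes get smoothly absorbed into $V^+$ without altering the compactly supported cohomology class; once that local picture is in place, the stratification argument in the third paragraph assembles the global answer mechanically.
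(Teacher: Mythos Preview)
Your diagonalization and eigenvalue count are the same as the paper's, and your final answer and description of the cyclic action are correct. The difference is the order of integration, and the paper's order is precisely the one that dissolves your obstacle.

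The paper fixes $\vec{\mathbf{q}}$ and projects out $A$ first. For each $\vec{\mathbf{q}}$ the condition $\Omega(A)\ge 0$ reads $\cos(2A/M)\le(\sum_\ell\mathbf{q}_\ell\mathbf{q}_{\ell+1})/(\sum_\ell\mathbf{q}_\ell^2)$, and since $\cos$ is monotone on the range in play this cuts out a single \emph{closed} subinterval of $[-T/(NR^2),0]$ (possibly empty). Closed intervals have $R\Gamma_c=\K$ in degree $0$, so the projection $\rho^{-1}(\{0\le -A\le T/(NR^2)\})\cap\mathcal{W}\to E^{NM}$ along $A$ is proper with contractible fibers onto its image, and that image is exactly the quadratic cone $\{\Omega(A_*)(\vec{\mathbf{q}})\ge 0\}$ at the single endpoint $A_*=-T/(NR^2)$. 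One further proper projection of this cone onto $V^+(A_*)\cong\R^{D+1}$, whose fibers are closed balls in $V^-(A_*)$, finishes the computation. The interior critical values of $A$ never enter: after integrating out $A$ only the endpoint form $\Omega(A_*)$ matters.

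Your route, pushing forward along $\vec{\mathbf{q}}$ first, forces you to control the constructible sheaf $R(\pi_A)_!\K_{\mathcal{W}_T}$ across each critical value $A=-\pi m'/N$, which is exactly the step you flag as unfinished. It can be carried out, but it is strictly more work, and the paper's choice of projection order sidesteps it completely.
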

\begin{proof}
Relabel the index $\{\q_{j,k}\}=\{\q_\ell\}=\vec{\q}$ in lexicographical order and $\ell\in\sfrac{\Z}{MN\Z}$. The condition $\Omega(A)\geq0$ on $\mathcal{W} $ translates to (\ref{Omega}) $\sum_{\ell} S_{\sfrac{A}{M}}(\q_\ell,\q_{\ell+1})\geq0$.  In other words, the points $(A,\vec{\q})\in\mathcal{W}$ are characterized by the inequality:
$$
\cos(\f{2A}{M})(\sum_\ell {\q_\ell}^2)\leq (\sum_\ell\q_\ell\q_{\ell+1}).
$$

By the assumption of $M$, which is $|\sfrac{2A}{M}|<\sfrac{\pi}{2}$, on $\mathcal{W}$ we have
$$
0\leq \cos(\f{2A}{M})(\sum_\ell{\q_\ell}^2)\leq (\sum_\ell \q_\ell\q_{\ell+1}).
$$
When $\vec{\q}$ has identical components,  for $A$ the condition (\ref{Canada}) $\rho^{-1}(\{0\leq -A \leq \f{T}{NR^2}\})\cap \mathcal{W}$ becomes the closed interval $0\leq -A \leq \f{T}{NR^2}$. On the other hand, by the fact $|\f{\sum_\ell\q_\ell\q_{\ell+1}}{\sum_\ell {\q_\ell}^2}|\leq 1$, other $\vec{\q}$ give another closed interval constraint on $A:$
$$
\arccos(\f{\sum_\ell\q_\ell\q_{\ell+1}}{\sum_\ell {\q_\ell}^2})\leq -\f{2A}{M}\leq \f{2T}{MNR^2}\,(\leq \f{\pi}{2}).
$$

In any of the above cases,  the constraint on $A$ has compactly supported cohomology $(\cdots\rightarrow\K\rightarrow\cdots)$ where $\K$ is located at degree $0$. 

On the other hand, to unwrap the condition $\Omega(A)\geq0$ of $\mathcal{W}$ we need to diagonalize the  matrix of quadratic form $\Omega(A)$(\ref{Omega}). Let $\omega=\exp(\f{2\pi i}{MN})$ be the generator of $MN$-th  roots of unity. Here we can take $MN$ to be an odd integer. The set of eigenvalues of $\Omega(A)$ is 
$$
\{\lambda_\ell=\cot(2A/M)-\f{1}{2}\csc(2A/M)(\omega^\ell+\omega^{NM-\ell})| \ell\in \sfrac{\Z}{MN\Z}\}
$$
or
$$
\{\lambda_\ell=\cot(2A/M)-\csc(2A/M)\cos(\f{2\pi }{MN}\ell)| \ell\in \sfrac{\Z}{MN\Z}\}.
$$

Among them the eigenvalue $\lambda_0$ corresponds to eigenvector $(1,1,\cdots,1)$. For each $\ell\neq0$, set  $\vec{\theta}_\ell=(1,\omega^\ell,\omega^{2\ell},\cdots)$ and denote its real and imaginary part by $\vec{\theta}_\ell = \vec{R}_\ell + i\vec{I}_\ell$. Then the eigenspace of $\lambda_\ell (=\lambda_{MN-\ell})$ is orthogonally spanned by $\vec{R}_\ell$ and $\vec{I}_\ell$. Let $\xi_0=\langle(1,1,\cdots,1),\vec{\mathbf{q}}\rangle$ and $u_\ell=\langle\vec{R}_\ell,\vec{\mathbf{q}}\rangle$ and $v_\ell=\langle\vec{I}_\ell,\vec{\q}\rangle$. Therefore the condition $\Omega(A)\geq0$ becomes 
$$
\lambda_0\xi_0^2+\displaystyle\sum_{\ell=1}^{\sfrac{(MN-1)}{2}}\lambda_\ell (u_\ell^2 + v_\ell^2)\geq 0.
$$

Hence the constraint (\ref{Canada}) $\rho^{-1}(\{0\leq -A \leq \f{T}{NR^2}\})\cap \mathcal{W}$ is homeomorphic to a Euclidean space $\R^{D+1}$. Here $+1$ stands for homogeneity of the quadratic form $\Omega$. Thus we can write
$$
\F_R|_T \cong  R\Gamma_c(\rho^{-1}(\{0\leq -A \leq \f{T}{NR^2}\})\cap \mathcal{W})\cong R\Gamma_c(\R^{D+1}).
$$

The generator of the cyclic action is given by $\mathbf{q}_j\mapsto\mathbf{q}_{j-M}$ and induces 
$$
\sum_{j\in\sfrac{\Z}{N\Z}} \mathbf{q}_j\omega^{j\ell}\mapsto\sum_{j\in\sfrac{\Z}{N\Z}} \mathbf{q}_{j-M}\omega^{j\ell}=\omega^{M\ell}\sum_{j\in\sfrac{\Z}{N\Z}} \mathbf{q}_j\omega^{j\ell}.
$$
Therefore the cyclic action on $E^{NM}$ is generated by  
\begin{equation}\label{ell}
u_\ell+i v_\ell \mapsto \omega^{M\ell} (u_\ell+i v_\ell).
\end{equation}

To compute $D$, let $\mathcal{I}$ be the number of positive eigenvalues of $\Omega$ among $\ell=1,\cdots,\f{MN-1}{2}$. After projecting along $A$ we can replace $-A$ by $\f{T}{NR^2}$, then $\mathcal{I}$ becomes the number of solutions of $\ell$ of  
\begin{equation}
\lambda_\ell=\cot(-\f{2T}{MNR^2})-\csc(-\f{2T}{MNR^2})\cos(\f{2\pi }{MN}\ell) > 0.
\end{equation}

Therefore   $\mathcal{I}=\lceil\f{T}{\pi R^2}\rceil-1$. On the other hand, from the fact $\F_R|_T\cong R\Gamma_c(\R^{D+1})$ we know that  $D+1=n+2n\mathcal{I}$. Here doubling $\mathcal{I}$ comes from pairing up $u_\ell$ and $v_\ell$, and $n=\dim(E)$. Therefore
\begin{equation}\label{Chile}
D=n+2n\mathcal{I}-1=2n\lceil\f{T}{\pi R^2}\rceil-n-1.
\end{equation}

\end{proof}

The cyclic group acts on $\F_R|_T $ by its action on $\R^{D+1}$. In this case one may take $T=N$ and write $\C(B_R\x\mathds{S}^1)\cong Rhom(\K_{t=N}[-D-1];\K_{t= N} [1-(n+1)N])\cong\K[D+2-(n+1)N]$ in $D(\K[\sfrac{\Z}{N\Z}])$.  

\begin{cor}\label{R=infty}
$\F_\infty |_T \cong R\Gamma_c(E) \cong\K[-n]$ as a $\K[\sfrac{\Z}{N\Z}]$-module.
\end{cor}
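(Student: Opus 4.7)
The plan is to read Corollary \ref{R=infty} as the $R\to\infty$ (equivalently, $\pr\to\K_\Delta$) degeneration of Lemma \ref{FR}, so that the answer drops out of the formulas already established for $\F_R|_T$ without requiring any new construction. Concretely, in this limit the projector $\pr$ is replaced by the identity kernel $\K_\Delta$, and the ``$NM$-gon'' geometry used to diagonalize the quadratic form $\Omega(A)$ collapses to a single diagonal line.

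First I would fix $T\geq 0$ and take $R$ large enough that $T<\pi R^2$. The ceiling $\lceil T/(\pi R^2)\rceil$ then equals $1$, and by formula (\ref{Chile}) the exponent becomes $D=2n-n-1=n-1$, hence $\F_R|_T\cong R\Gamma_c(\R^{D+1})=R\Gamma_c(\R^n)\cong\K[-n]$. Since this value is independent of $R$ in the range $\pi R^2>T$, it is legitimate to pass to $R=\infty$ and conclude $\F_\infty|_T\cong R\Gamma_c(E)\cong\K[-n]$.

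The remaining point is to identify the $\K[\sfrac{\Z}{N\Z}]$-module structure as the trivial one. For this I would revisit the eigenvalue bookkeeping in the proof of Lemma \ref{FR}: when $\mathcal{I}=0$, the only non-negative eigenvalue of $\Omega(A)$ is $\lambda_0$, whose eigenvector is the diagonal vector $(1,1,\ldots,1)\in E^{NM}$. This vector is manifestly fixed by the cyclic shift $\q_{j,k}\mapsto\q_{j-1,k}$, and by formula (\ref{ell}) the indices $\ell\neq 0$ that would carry nontrivial roots of unity do not appear. Therefore the cyclic group acts trivially on the surviving coordinate $\xi_0$, on the Euclidean slice $\R^{D+1}\cong E$, and consequently on $R\Gamma_c(E)\cong\K[-n]$.

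As a sanity check, combining $\F_\infty|_T\cong\K[-n]$ with $\C(B_R\x\mathds{S}^1)\cong Rhom(\F_R;\K_{t=N}[1-(n+1)N])$ at $R=\infty$ returns $Rhom(\K[-n];\K[1-(n+1)N])\cong\K[(n+1)(1-N)]$, matching Lemma \ref{whole} exactly. I do not anticipate any real obstacle in this argument: it is a direct specialization of Lemma \ref{FR} to the regime $\pi R^2>T$, where the only subtlety is checking that the eigenspace calculation is stable and that the cyclic shift fixes the diagonal generator.
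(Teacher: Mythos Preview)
Your computation is sound and lands on the same content as the paper, but the step ``since this value is independent of $R$ in the range $\pi R^2>T$, it is legitimate to pass to $R=\infty$'' is not justified as written. The object $\F_\infty$ is defined by substituting the identity kernel $\K_\Delta$ for $\pr$, not as a limit of the $\F_R$; that $\F_R|_T$ stabilizes for large finite $R$ does not, by itself, force $\F_\infty|_T$ to take the same value.

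The fix is immediate and is what the paper actually does: apply Lemma~\ref{fr} directly at $R=\infty$. The interval $\{0\leq -A\leq T/(NR^2)\}$ collapses to the single point $\{A=0\}$, so $\F_\infty|_T\cong\mathcal{E}|_{A=0}$. At $A=0$ the inequality $\cos(2A/M)\sum_\ell \q_\ell^2\leq\sum_\ell\q_\ell\q_{\ell+1}$ becomes equality in Cauchy--Schwarz, forcing all $\q_\ell$ to coincide; the constraint set is exactly the diagonal $E\subset E^{NM}$, on which the cyclic shift acts trivially, and $R\Gamma_c(E)\cong\K[-n]$. Your eigenvalue bookkeeping (only $\lambda_0$ survives, with eigenvector $(1,\dots,1)$) is precisely the spectral version of this same observation, so once you reroute through $\mathcal{E}|_{A=0}$ rather than through large finite $R$, your argument and the paper's are the same.
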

\begin{proof}
Also note that $\R^{2n}\x\mathds{S}^1$ corresponds to the limit $R\rightarrow\infty$. In this case we must have $A=0$ and the inequality $|\f{\sum_\ell\q_\ell\q_{\ell+1}}{\sum_\ell {\q_\ell}^2}|\leq 1$ should hold its equality and forces $\vec{\q}$ to have identical components parametrized by $E$. Therefore the cyclic group acts trivially on it and as a $\K[\sfrac{\Z}{N\Z}]$-module we get
$$
\F_\infty |_T \cong R\Gamma_c(\{A=0\} ; \mathcal{E})\cong R\Gamma_c(E) \cong\K[-n].
$$ 
\end{proof}

This recovers \textbf{Lemma \ref{whole}}. 
\vspace{1cm}
\subsection{The Cones of Embeddings}\ \\

For the purpose of testing squeezability in the ambient space $\R^{2n}\x\mathds{S}^1$, instead of individual $C_N(B_R\x\mathds{S}^1)$ we need to focus on the cone of the morphism $C_N(\R^{2n}\x\mathds{S}^1)\xrightarrow{j_R^*} C_N(B_R\x\mathds{S}^1)$. By the definition (\ref{Paul}) of $\F_R$ this is the cone of 
$$
Rhom(\F_\infty;\K_{t= N} [1-(n+1)N] )\rightarrow Rhom(\F_R;\K_{t= N} [1-(n+1)N] ).
$$

\begin{prop}\label{D}
Let $\mathscr{L}_R$ be the cocone of the morphism $\F_R\rightarrow\F_\infty$ (which means that $\mathscr{L}_R[1]$ is the cone). Then in $D(\K[\sfrac{\Z}{N\Z}])$ one has $\mathscr{L}_R\cong R\Gamma_c(\mathds{S}^{D-n}\x E\x \R_{>0})$. Here $D=2n\lceil\f{T}{\pi R^2}\rceil-n+1$.  The cyclic group acts on $\mathds{S}^{D-n}$ by complex multiplication (\ref{ell}).
\end{prop}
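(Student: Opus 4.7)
The plan is to identify $\mathscr{L}_R$ with the compactly supported equivariant cohomology of the complement $U := W_R \setminus E$, where $W_R = \rho^{-1}(\{0 \leq -A \leq T/(NR^2)\}) \cap \mathcal{W}$ is the region from Lemma \ref{fr} and $E$ is its closed slice at $A = 0$ (forced to the diagonal direction by (\ref{leftend})). I would first argue that the morphism $\F_R \to \F_\infty$ corresponds to restriction along this equivariant closed inclusion $E \hookrightarrow W_R$: by Corollary \ref{R=infty}, $\F_\infty|_T \cong R\Gamma_c(E)$ arises as the limit slice of $W_R$ as $R \to \infty$, at which $\mathscr{G}$ collapses to the diagonal. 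The resulting distinguished triangle $R\Gamma_c(U) \to R\Gamma_c(W_R) \to R\Gamma_c(E) \xrightarrow{+1}$ in $D^+_{\sfrac{\Z}{N\Z}}(pt)$ then identifies $\mathscr{L}_R|_T \cong R\Gamma_c(U)$.

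Next I would unwrap $U$ using the quadratic-form diagonalization from the proof of Lemma \ref{FR}. The Fourier eigenbasis $y_0 \oplus \bigoplus_{\ell \geq 1}(u_\ell, v_\ell)$ of $\Omega(A)$ is $A$-independent, and that proof exhibits $W_R$ as homeomorphic to $\R^{D+1}$, realized concretely as the positive eigenspace $V_+^{\max}$ of $\Omega(-T/(NR^2))$. Under this identification $E$ embeds as the $\lambda_0$-eigenspace -- a linear subspace of dimension $n$, corresponding to the diagonal direction $y_0$ -- so
$$U \cong \R^{D+1} \setminus \R^n \cong E \x (\R^{D+1-n} \setminus \{0\}) \cong E \x \R_{>0} \x \mathds{S}^{D-n}$$
via the standard radial decomposition of the second factor. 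Applying $R\Gamma_c$ and using compactness of the sphere delivers the asserted isomorphism.

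The $\sfrac{\Z}{N\Z}$-action is then straightforward: by Lemma \ref{FR} the cyclic generator acts on $E^{NM}$ trivially on the $\lambda_0$-eigenspace and by complex multiplication by $\omega^{M\ell}$ on the $\ell$-th complex Fourier pair. The decomposition above is equivariant, so $E$ and $\R_{>0}$ inherit trivial actions while $\mathds{S}^{D-n} \subset \bigoplus_{\ell=1}^{\mathcal{I}}\mathds{C}^n$ carries the weighted complex-multiplication action as claimed.

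The main obstacle is controlling the homeomorphism $W_R \cong \R^{D+1}$ sharply enough to certify that $E$ lies inside as a genuinely linear subspace, so that $U$ admits the above radial decomposition rather than only a homotopy-equivalent model. If a direct verification from the quadratic-form picture of Lemma \ref{FR} proves delicate, a fallback is to exploit the $\R_{>0}$-scaling action on $\vec{\q}$ in $U$: its fixed locus $\{(A,0) : A \in [-T/(NR^2),0)\}$ is a half-open interval with vanishing $R\Gamma_c$, reducing the computation to $R\Gamma_c$ of the free orbit space, which one then shows retracts properly onto $E \x \mathds{S}^{D-n}$ using that the positive eigenspace of $\Omega(A)$ grows monotonically as $|A|$ increases toward $T/(NR^2)$.
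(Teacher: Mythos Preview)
Your proposal is correct and follows essentially the same route as the paper. The paper's proof is very terse: it invokes Lemma~\ref{fr} and Corollary~\ref{R=infty} to write $\mathscr{L}_R|_T \cong R\Gamma_c(\{0 < -A \leq T/(NR^2)\};\mathcal{E})$, observes that the corresponding constraint set is $\R^{D+1}$ with the diagonal copy of $E$ removed, and concludes. Your open--closed triangle for the pair $(U,W_R,E)$ is exactly this step made explicit, and your identification of $E$ with the $\lambda_0$-eigenspace under the diagonalization of Lemma~\ref{FR} is the precise content of the paper's phrase ``with all diagonals $(\q,\q,\ldots,\q)$ removed.'' The obstacle you flag---ensuring the homeomorphism $W_R\cong\R^{D+1}$ carries $E$ to a genuine linear subspace---is glossed over in the paper as well, so your caution (and your scaling-action fallback) is if anything more careful than the original argument.
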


\begin{proof}

\textbf{Lemma \ref{fr}}, and \textbf{Corollary \ref{R=infty}} tell us 
$$\mathscr{L}_R|_T\cong R\Gamma_c(\{0< -A \leq \f{T}{NR^2}\} ; \mathcal{E})\cong R\Gamma_c(\rho^{-1}(\{0< -A \leq \f{T}{NR^2}\})\cap \mathcal{W}).
$$

Following \textbf{Lemma \ref{FR}} it is easy to see that the constraint set is homeomorphic to $R^{D+1}$ with all diagonals $(\q,\q,\q,\cdots,\q)$ removed. This is homeomorphic to $\mathds{S}^{D-n} \x E\x\R_{>0}$. 

Recall that to $E^{NM-1}$ we assign complex coordinate $u_\ell+i v_\ell\in \mathds{C}^n$ for $\ell=1,\cdots,\frac{MN-1}{2}$. The $\sfrac{\Z}{N\Z}$ group action on $\mathds{S}^{D-n}$ part is given by (\ref{ell}) $u_\ell+i v_\ell \mapsto \omega^{M\ell} (u_\ell+i v_\ell)$ for those $\ell$ corresponding to positive eigenvalues. The $\sfrac{\Z}{N\Z}$ actions are trivial on both $E$ and $\R_{>0}$ parts.

\end{proof}

Recall that $\C(B_R\x\mathds{S}^1)\cong Rhom(\F_R ; \K_{t= N} [1-(n+1)N] )$. We should take $T=N$ so that in this case we have $D=2n\lceil\f{N}{\pi R^2}\rceil-n+1$. In the rest of the paper we are only interested in the case $\pi R^2\geq1$. Then the initial assumption of $M>\f{4T}{\pi NR^2}$ in \textbf{Lemma \ref{fr}}
 can be simplified to

\vspace{0.3cm}
\noindent\textbf{Assumption.} We assume $T=M=N>3 $ is a prime number.
\vspace{0.3cm}

Recall that $\mathcal{I}$ is the number of positive eigenvalues $\lambda_\ell$ among  $\ell=1,\cdots,\frac{N^2-1}{2}$. Let $\zeta=\omega^M=\omega^N$ be the primitive $N$-th root of the unity. The cyclic action on $\mathcal{L}_R|_{T=N}$ is given by (\ref{ell}) $u_\ell+i v_\ell \mapsto \zeta^{\ell} (u_\ell+i v_\ell)$ for $\ell=1,\cdots,\mathcal{I}$. Therefore by (\ref{Chile}) we have

\begin{cor}
If $\pi R^2\geq 1$ then the group $\sfrac{\Z}{N\Z}$ acts freely on the sphere $\mathds{S}^{D-n}$.
\end{cor}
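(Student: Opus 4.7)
The plan is to reduce the freeness assertion to an elementary arithmetic check: for every nonzero $k \in \sfrac{\Z}{N\Z}$ and every exponent $\ell$ appearing in the representation one needs $\zeta^{k\ell}\neq 1$; the geometric hypothesis $\pi R^2\geq 1$ bounds $\ell$ strictly below $N$, and primality of $N$ then closes the argument.

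First I would record the precise form of the action that has already been extracted. By \textbf{Lemma \ref{FR}} together with the diagonalization of the quadratic form $\Omega(A)$, the constraint region is, up to the trivial factors $E\times\R_{>0}$, the unit sphere in the direct sum $\bigoplus_{\ell=1}^{\mathcal{I}}\mathds{C}_\ell^n$ of positive-eigenvalue complex eigenspaces of $\Omega$; the subscript $\ell$ records the eigencharacter. By equation (\ref{ell}), the generator of $\sfrac{\Z}{N\Z}$ acts by
\[
(x_1,\dots,x_{\mathcal{I}})\longmapsto(\zeta^{1}x_1,\zeta^{2}x_2,\dots,\zeta^{\mathcal{I}}x_{\mathcal{I}}),
\]
where $\zeta=\omega^N$ is a primitive $N$-th root of unity and each $x_\ell\in\mathds{C}^n$.

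Next I would observe that freeness of $\sfrac{\Z}{N\Z}$ on $\mathds{S}^{D-n}$ is equivalent to freeness on $\mathds{C}^{n\mathcal{I}}\setminus\{0\}$. A vector supported only in the $\ell$-th summand (i.e.\ $x_\ell\neq 0$, $x_{\ell'}=0$ for $\ell'\neq\ell$) is fixed by the $k$-th power of the generator if and only if $\zeta^{k\ell}=1$, i.e.\ $N\mid k\ell$. Therefore the stabilizers are all trivial precisely when $N\nmid k\ell$ for every $k\in\{1,\dots,N-1\}$ and every $\ell\in\{1,\dots,\mathcal{I}\}$.

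Finally the hypothesis enters in one line. With $T=N$, \textbf{Lemma \ref{FR}} gives $\mathcal{I}=\lceil N/(\pi R^2)\rceil-1$, so $\pi R^2\geq 1$ forces $\mathcal{I}\leq N-1$ and hence $\ell\in\{1,\dots,N-1\}$. Since $N$ is prime, $\gcd(\ell,N)=1$, and for $1\leq k\leq N-1$ we have $\gcd(k,N)<N$, whence $N\nmid k\ell$. This proves freeness on the sphere. There is no genuine obstacle: the geometric content has been fully absorbed into the computation of $\mathcal{I}$ and the explicit character decomposition of the representation, after which the corollary is a clean arithmetic consequence of the primality of $N$, which is also exactly where primality (as opposed to mere largeness) of $N$ is indispensable — were $N$ composite, an exponent $\ell$ could share a factor with $N$ even while $\ell<N$, and freeness would fail.
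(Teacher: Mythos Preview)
Your proof is correct and follows exactly the same route as the paper; the paper's own proof is the single line ``$\pi R^2\geq1$ implies $\mathcal{I}=\lceil N/(\pi R^2)\rceil-1\leq N-1$,'' leaving the reader to supply precisely the arithmetic you spell out (that $1\leq\ell\leq N-1$ together with $N$ prime forces $\zeta^{k\ell}\neq1$ for $1\leq k\leq N-1$). Your version is simply a fully unpacked form of theirs, including the useful remark on why primality rather than mere size of $N$ is essential here.
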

\begin{proof}
This is because $\pi R^2\geq1$ implies   $\mathcal{I}=\lceil\f{N}{\pi R^2}\rceil-1\leq N-1$.
\end{proof}

Recall that $\mathscr{L}_R|_{T=N}$ computes the compactly supported equivariant cohomology of $\mathds{S}^{D-n}\x E\x \R_{>0}$. From the above corollary, we see the $\sfrac{\Z}{N\Z}$-quotient of this sphere $\mathds{S}^{D-n}=\mathds{S}^{2n\mathcal{I}}$ is the lens space $L(N;1,\cdots,1,2,\cdots,2,\cdots,\mathcal{I},\cdots,\mathcal{I})$. Note that the group $\sfrac{\Z}{N\Z}$ acts trivially on the $E\x \R_{>0}$ part. So we have

\begin{cor}\label{free}
Assume that $\pi R^2\geq1$. In the equivariant derived category $\mathcal{L}_R|_{T=N}[n+1]$ becomes the cohomology of (D-n)-dimensional lens space.
\end{cor}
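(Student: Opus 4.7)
The plan is to reduce the equivariant compactly supported cohomology on $\mathds{S}^{D-n}\x E\x\R_{>0}$ to ordinary cohomology on a lens space by peeling off the two trivial factors with the K\"{u}nneth formula and then using freeness of the action on the sphere.

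First I would apply the external-product decomposition
$$R\Gamma_c(\mathds{S}^{D-n}\x E\x\R_{>0})\cong R\Gamma_c(\mathds{S}^{D-n})\otimes R\Gamma_c(E)\otimes R\Gamma_c(\R_{>0}).$$
Since the $\sfrac{\Z}{N\Z}$-action is trivial on $E$ and on $\R_{>0}$ by \textbf{Proposition \ref{D}}, the last two factors live in $D^+_{\sfrac{\Z}{N\Z}}(pt)$ as constant-cohomology complexes. Poincar\'e duality for the real vector space $E=\R^n$ gives $R\Gamma_c(E)\cong\K[-n]$, and the same computation for $\R_{>0}$ yields $R\Gamma_c(\R_{>0})\cong\K[-1]$, with $\K[\sfrac{\Z}{N\Z}]$ acting trivially on both. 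Tensoring and then shifting by $[n+1]$ therefore identifies
$$\mathscr{L}_R|_{T=N}[n+1]\cong R\Gamma_c(\mathds{S}^{D-n})\otimes\K[-n-1]\otimes\K[n+1]\cong R\Gamma(\mathds{S}^{D-n})$$
in $D^+_{\sfrac{\Z}{N\Z}}(pt)$, where I used compactness of the sphere to drop the subscript $c$.

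Next I would invoke the preceding corollary: because $\pi R^2\ge1$ forces $\mathcal{I}=\lceil\f{N}{\pi R^2}\rceil-1\leq N-1$, none of the weights $\ell=1,\dots,\mathcal{I}$ on the eigendirections is divisible by $N$, so the diagonal $\sfrac{\Z}{N\Z}$-action by complex multiplication $(u_\ell+iv_\ell)\mapsto\zeta^\ell(u_\ell+iv_\ell)$ is free on every unit sphere in $\mathds{C}^{\mathcal{I}n}$. The quotient is precisely the lens space $L(N;\underbrace{1,\dots,1}_{n},\underbrace{2,\dots,2}_{n},\dots,\underbrace{\mathcal{I},\dots,\mathcal{I}}_{n})$ of real dimension $D-n=2n\mathcal{I}$. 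Since the action is free, the projection $\mathds{S}^{D-n}\to L$ is a principal $\sfrac{\Z}{N\Z}$-bundle and the standard identification of equivariant cohomology under a free action gives $R\Gamma(\mathds{S}^{D-n})\cong R\Gamma(L)$ in $D^+_{\sfrac{\Z}{N\Z}}(pt)$, where the right-hand side carries the constant cyclic-group action coming from the fact that a lens space is represented by the classifying map $L\to B(\sfrac{\Z}{N\Z})$.

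The only place requiring some care is keeping track of the equivariance when converting from the K\"{u}nneth decomposition back to $D^+_{\sfrac{\Z}{N\Z}}(pt)$: I need to verify that the trivial factors really contribute trivial $\K[\sfrac{\Z}{N\Z}]$-modules, which follows from triviality of the action on $E$ and $\R_{>0}$, and that the Poincar\'e duality isomorphisms can be chosen $\sfrac{\Z}{N\Z}$-equivariantly, which is automatic for a trivial action. The main conceptual point -- and the only genuine geometric input -- is the freeness that comes from the numerical constraint $\pi R^2\geq 1$; once this is in place, everything else is formal manipulation of Poincar\'e duality and the quotient map for a free finite group action.
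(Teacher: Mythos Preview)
Your argument is correct and follows essentially the same route as the paper: the paper also splits off the trivially-acted factors $E$ and $\R_{>0}$ (contributing the shift $[n+1]$), then invokes the freeness corollary to identify the equivariant cohomology of $\mathds{S}^{D-n}$ with the ordinary cohomology of its lens-space quotient. Your write-up is simply more explicit about the K\"unneth step and about equivariance of the Poincar\'e duality identifications, but the underlying proof is the same.
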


We will apply this fact to the proof of non-squeezability.

\vspace{2cm}

\section{Proof of Non-Squeezability}\label{proof}
\vspace{0.75cm}
Now given $1\leq\pi r^2<\pi R^2$. We are going to prove contact non-squeezability by contradiction. Assume that $B_R\x\mathds{S}^1$ can be squeezed into $B_R\x\mathds{S}^1$ by $\Phi$, and $\Phi$ can be joint to the identity via compactly supported contact isotopy.

By \textbf{Theorem \ref{inv}} on this sequence of open embeddings $\Phi(B_R\x\mathds{S}^1)\overset{j}{\hookrightarrow}B_r\x\mathds{S}^1\overset{j_r}{\hookrightarrow} \R^{2n}\x\mathds{S}^1 $ we have the following commutative diagram for induced morphisms:

\xymatrix{
&& \C( \Phi(B_R\x\mathds{S}^1)) \ar[d]_{\cong} & \C(B_r\x\mathds{S}^1)\ar[l]_{\quad j^*}   &  \C(  \R^{2n}\x\mathds{S}^1)\ar[l]_{{j_r}^*} \ar[lld]^{{j_R}^*} \\
&& \C(B_R\x\mathds{S}^1).
}

Recall \textbf{Proposistion} \ref{D} that there is a distinguished triangle
$$
\C(\R^{2n}\x\mathds{S}^1)\xrightarrow{j_R^*}\C(B_R\x\mathds{S}^1)\rightarrow Rhom(\mathscr{L}_R;\K_{t=N}[1-(n+1)N])\xrightarrow{+1}.
$$

Before working in the equivariant setting, let us shift $\mathscr{L}_R$ by $n+1$ to insure they all live in non-negative degrees:
$$
\C(\R^{2n}\x\mathds{S}^1)\xrightarrow{j_R^*}\C(B_R\x\mathds{S}^1)\rightarrow Rhom(\mathscr{L}_R[n+1];\K_{t=N}[n+2-(n+1)N])\xrightarrow{+1}.
$$

We need to consider the cyclic action on the cone of the morphism $j_R^*$. In the category of $D(\K[\sfrac{\Z}{N\Z}])$ one has $\C(\R^{2n}\x\mathds{S}^1)\cong\K[D+1-(n+1)N]$ and $\C(B_R\x\mathds{S}^1)\cong \K[D+2-(n+1)D]$. The group algebra $\K[\sfrac{\Z}{N\Z}]$ acts on the former $\K$ by zero. To distinguish, we write $\C(\R^{2n}\x\mathds{S}^1)\cong\K_1[D+1-(n+1)N]$ and $\C(B_R\x\mathds{S}^1)\cong \K_2[D+2-(n+1)D]$ where $\K[\sfrac{\Z}{N\Z}]$ acts trivially on $\K_1$. We write $j_R^*\in Rhom_{\K[\sfrac{\Z}{N\Z}]}(\K_1,\K_2)[D-n+1]$.  $N$ is an odd prime and $\K$ is the finite field of $N$ elements.

In order to discuss the equivariant cohomology of $cone(j_R^*)$, let us pass to the cohomology of $Rhom^{\bu}_{\K[\sfrac{\Z}{N\Z}]}(-,-)$. It is the Yoneda product $Ext_{\K[\sfrac{\Z}{N\Z}]}^\bu(-;-)$. The product of cohomology classes $[j^*]\circ[j_r^*]=[j_R^*]$ is given by concatenation 
$$
Ext^\bu(\K_2;\K_2)\x Ext^\bu(\K_1;\K_2)\rightarrow Ext^{\bu+\bu}(\K_1;\K_2).
$$  

It turns out that the size of the contact ball has a crucial cohomological interpretation using this $Ext^\bu$ structure:

\begin{lem}\label{nonzero}
If $\pi R^2\geq1$ then $[j_R^*]$ is nonzero in $Ext_{\K[\sfrac{\Z}{N\Z}]}^{D-n+1}(\K_1;\K_2)$.
\end{lem}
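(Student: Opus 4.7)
My plan is to identify $[j_R^*]$ with the equivariant Euler class of an explicit $G$-representation and then to compute this class inside $H^*(B\sfrac{\Z}{N\Z};\K)$, where throughout I write $G=\sfrac{\Z}{N\Z}$. By Theorem \ref{inv}, combined with Lemma \ref{fr}, Lemma \ref{FR}, and Corollary \ref{R=infty}, the morphism $j_R^*$ arises by applying $Rhom(-,\K_{t=N}[1-(n+1)N])$ to the natural sheaf morphism $\F_R\to\F_\infty$, and at the relevant time $T=N$ this morphism is precisely the restriction $R\Gamma_c(\R^{D+1};\K)\to R\Gamma_c(E;\K)$ induced by the closed embedding $i\colon E\hookrightarrow\R^{D+1}$ of the $G$-fixed locus inside the linear $G$-representation $\R^{D+1}$. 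As a $G$-representation one has $\R^{D+1}=E\oplus\nu$ with $\nu=\bigoplus_{\ell=1}^{\mathcal{I}}\mathds{C}^n_{\chi_\ell}$, where the $\ell$-th complex summand carries the character $\zeta^\ell$ coming from \eqref{ell}.

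Because $G$ acts on each $\mathds{C}^n_{\chi_\ell}$ by orientation-preserving complex multiplication, one has $R\Gamma_c(\nu;\K)\simeq\K[-c]$ in $D(\K[G])$ with trivial action on the top cohomology and $c=2n\mathcal{I}$; equivariant K\"{u}nneth then factors $\F_R\simeq\F_\infty\otimes^{L}R\Gamma_c(\nu;\K)$, and under this splitting the restriction $\F_R\to\F_\infty$ becomes $\mathrm{id}_{\F_\infty}\otimes\varepsilon$ with $\varepsilon\colon R\Gamma_c(\nu;\K)\to\K$ the pushforward to the origin. By equivariant Thom, $\varepsilon$ represents the equivariant Euler class $e_G(\nu)\in Ext^c_{\K[G]}(\K,\K)\cong H^c(B\sfrac{\Z}{N\Z};\K)$, and tensor--hom adjunction identifies $[j_R^*]\in Ext^{D-n+1}_{\K[G]}(\K_1,\K_2)$ with $e_G(\nu)$ via the canonical isomorphism $Ext^{D-n+1}_{\K[G]}(\K_1,\K_2)\cong H^{2n\mathcal{I}}(B\sfrac{\Z}{N\Z};\K)$; both $\K_1$ and $\K_2$ carry trivial $G$-action because complex multiplication preserves real orientations of both $E$ and $\R^{D+1}$.

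To compute $e_G(\nu)$, recall that for $G=\sfrac{\Z}{N\Z}$ with $N$ an odd prime, $H^*(B\sfrac{\Z}{N\Z};\K)=\Lambda(y)\otimes\K[x]$ with $|y|=1$, $|x|=2$, and the equivariant first Chern class of the character-$\ell$ complex line is $\ell x$. Multiplicativity of Euler classes across direct sums yields
$$
e_G(\nu)=\prod_{\ell=1}^{\mathcal{I}}(\ell x)^{n}=(\mathcal{I}!)^{n}\, x^{n\mathcal{I}}\in H^{2n\mathcal{I}}(B\sfrac{\Z}{N\Z};\K).
$$
The hypothesis $\pi R^2\geq 1$ forces $\mathcal{I}=\lceil N/(\pi R^2)\rceil-1\leq N-1$, so every factor $1,2,\dots,\mathcal{I}$ of $\mathcal{I}!$ is coprime to $N$; the boundary case $\mathcal{I}=N-1$ is handled by Wilson's theorem $(N-1)!\equiv-1\pmod N$. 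Hence $(\mathcal{I}!)^{n}$ is a unit in $\K$, so $e_G(\nu)\neq0$ and therefore $[j_R^*]\neq 0$.

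The main obstacle will be making the equivariant Thom/K\"{u}nneth identification rigorous inside Tamarkin's microlocal framework: one has to verify that the chain-level morphism $\F_R\to\F_\infty$ coming out of the computations in Section \ref{invariant} genuinely lifts to the equivariant sheaf morphism $\K_{\R^{D+1}}\to i_{*}\K_{E}$, so that its class in $D(\K[G])$ really is the equivariant Euler class rather than merely agreeing with it after forgetting the $G$-action (where the class vanishes for dimension reasons over a field). Once this sheaf-level compatibility is secured, the resulting characteristic-class computation in $H^{*}(B\sfrac{\Z}{N\Z};\K)$ is standard.
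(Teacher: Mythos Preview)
Your argument is correct, but it takes a genuinely different route from the paper's. The paper argues by contradiction: if $[j_R^*]=0$ then $cone(j_R^*)\simeq\K_1\oplus\K_2[D-n]$ in $D(\K[\sfrac{\Z}{N\Z}])$, and since $\K_1$ carries the trivial action its image in $D^+_{\sfrac{\Z}{N\Z}}(pt)$ has unbounded cohomology (it sees all of $H^*(B\sfrac{\Z}{N\Z})$); on the other hand, Corollary~\ref{free} identifies $cone(j_R^*)$, up to shift, with the cohomology of a $(D-n)$-dimensional lens space, which is bounded. The contradiction pivots on the single qualitative fact that the $\sfrac{\Z}{N\Z}$-action on $\mathds{S}^{D-n}$ is free when $\pi R^2\geq 1$.

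Your approach instead names the class: you identify $[j_R^*]$ with the equivariant Euler class $e_G(\nu)=(\mathcal{I}!)^n x^{n\mathcal{I}}$ of the normal representation $\nu=\bigoplus_{\ell=1}^{\mathcal{I}}\mathds{C}^n_{\chi_\ell}$ and check that $\mathcal{I}!\in\K^\times$ because $\mathcal{I}\leq N-1$. This is strictly more informative---it computes the class rather than merely showing it is nonzero---and it makes transparent why the hypothesis $\pi R^2\geq 1$ enters (each character $\chi_\ell$ is nontrivial precisely when $\ell<N$). The paper's proof, by contrast, is shorter once Corollary~\ref{free} is in hand and avoids any characteristic-class formalism.

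The one point you correctly flag as needing care is real but tractable: the morphism $\pr\to\K_\Delta$ is, via \eqref{block}, induced by the restriction $\K_{\{a\leq 0,\,t+aR^2\geq 0\}}\to\K_{\{a=0,\,t\geq 0\}}$, and tracing this through the construction of $\F_R$ in Lemmas~\ref{fr}--\ref{FR} shows that at $T=N$ it becomes the restriction to the closed fixed locus $\{A=0\}$ inside $\{0\leq -A\leq N/(NR^2)\}$, hence to the diagonal $E\subset\R^{D+1}$. After that, your equivariant Thom identification is standard. Your observation that both $\K_1$ and $\K_2$ carry the trivial action (because the $\sfrac{\Z}{N\Z}$-action on $\R^{D+1}$ is by complex multiplication, hence orientation-preserving) is also correct and is needed to land in $Ext^\bullet_{\K[\sfrac{\Z}{N\Z}]}(\K,\K)=H^*(B\sfrac{\Z}{N\Z};\K)$.
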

\begin{proof}
If the cohomology class of $j_R^*$ becomes zero in $Ext_{\K[\sfrac{\Z}{N\Z}]}^{D-n+1}(\K_1;\K_2)$, then up to a degree shifting $cone(j_R^*)$ is quasi-isomorphic to $\K_1\bigoplus\K_2[D-n]$ in the derived category $D(\K[\sfrac{\Z}{N\Z}])$. It turns out that in $D_{\sfrac{\Z}{N\Z}}^{+}(pt)$ up to a degree shifting $cone(j_R^*)$ is just the equivariant cohomology of the disjoint union of a point $\{\ast\}$ and a $\R^{D-n}$. Since $\sfrac{\Z}{N\Z}$-action fixed $\{\ast\}$, we see that in $D_{\sfrac{\Z}{N\Z}}^{+}(pt)$ the object $cone(j_R^*)$ has unbounded cohomology.

On the other hand, we have $cone(j_R^*)\cong Rhom(\mathscr{L}_R[n+1];\K_{t=N}[n+2-(n+1)N])$. \textbf{Corollary \ref{free}} tells that the equivariant cohomology of $\mathscr{L}_R|_{T=N}[n+1]$ is the cohomology of a ($D-n$)-dimensional lens space. We see that in $D_{\sfrac{\Z}{N\Z}}^{+}(pt)$ the object $cone(j_R^*)$ should have bounded cohomology.

We arrive at the  contradiction by computing the equivariant cohomology of $cone(j_R^*)$ in two different ways. Therefore $j_R^*$ does pass to a nonzero class in $Ext_{\K[\sfrac{\Z}{N\Z}]}^\bu(\K_1;\K_2)$. And it makes sense to write $\deg([j_R^*])=D-n+1$.

\end{proof}

The non-squeezing property of contact balls in large scale is followed by the degree counting.

\begin{thm}
There is no such $j^*$ making the diagram commutative.
\end{thm}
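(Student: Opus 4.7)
The plan is to pass to cohomology and compare graded degrees inside the Yoneda Ext-algebra. Functoriality of the invariant (Theorem \ref{inv}) turns the assumed commutative diagram into the identity
$$
[j_R^*] \;=\; [j^*]\circ[j_r^*]
$$
in $\mathrm{Ext}^\bullet_{\K[\sfrac{\Z}{N\Z}]}(\K_1;\K_2)$, so in particular cohomological degrees must satisfy $\deg[j^*] = \deg[j_R^*] - \deg[j_r^*]$.

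Next I read off the two known degrees. By Lemma \ref{nonzero} together with the dimension count in Lemma \ref{FR} and Proposition \ref{D}, $[j_R^*]$ is a nonzero class of degree $d_R = D_R - n + 1 = 2n\lceil \f{N}{\pi R^2}\rceil + c$ for a constant $c$ depending only on $n$, and likewise $[j_r^*]$ has degree $d_r = 2n\lceil \f{N}{\pi r^2}\rceil + c$. Subtracting,
$$
\deg[j^*] \;=\; 2n\Bigl(\lceil \f{N}{\pi R^2}\rceil - \lceil \f{N}{\pi r^2}\rceil\Bigr).
$$

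Now I choose $N$. Since $1\leq \pi r^2<\pi R^2$, we have $0<\f{1}{\pi R^2}<\f{1}{\pi r^2}\leq 1$, so the half-open interval $(\f{N}{\pi R^2},\,\f{N}{\pi r^2}]$ has length $\f{N(R^2-r^2)}{\pi r^2R^2}$, which grows linearly with $N$. Hence for every sufficiently large prime $N$ (any prime exceeding both $3$ and $\f{\pi r^2 R^2}{R^2-r^2}$ works, and such primes exist by unboundedness of the primes) this interval contains at least one integer. That forces $\lceil \f{N}{\pi R^2}\rceil < \lceil \f{N}{\pi r^2}\rceil$ and hence $\deg[j^*] \leq -2n < 0$.

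The contradiction then comes for free. Because $\K[\sfrac{\Z}{N\Z}]$ is the group algebra of a finite group, every module admits a projective resolution concentrated in non-negative homological degrees, so $\mathrm{Ext}^{i}_{\K[\sfrac{\Z}{N\Z}]}(\K_1;\K_2)=0$ for every $i<0$. Thus $[j^*]=0$, whence $[j_R^*]=[j^*]\circ[j_r^*]=0$; this contradicts the non-vanishing of $[j_R^*]$ established in Lemma \ref{nonzero}. The only genuinely delicate ingredient is the arithmetic choice of $N$, namely that for a suitable large prime the interval $(\f{N}{\pi R^2},\f{N}{\pi r^2}]$ captures an integer; everything else is bookkeeping inside the graded Ext-algebra.
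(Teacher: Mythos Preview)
Your proof is correct and is essentially the paper's own argument: factor $[j_R^*]=[j^*]\circ[j_r^*]$ in the Yoneda Ext-algebra, observe that a sufficiently large prime $N$ forces $\lceil N/(\pi R^2)\rceil<\lceil N/(\pi r^2)\rceil$ and hence places $[j^*]$ in negative Ext-degree, and conclude $[j_R^*]=0$ in contradiction to Lemma~\ref{nonzero}. The only difference is presentational: you spell out explicitly that $\mathrm{Ext}^{<0}=0$ and give a quantitative bound on $N$, whereas the paper records the same step as the bare inequality $\deg([j_R^*])\geq\deg([j_r^*])$.
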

\begin{proof}
For the diagram to commute we have $j_R^*=j^*\circ j_r^*$. By \textbf{Lemma \ref{nonzero}} we have $\deg([j_R^*])=\deg([j^*])+\deg([j_r^*])$. It follows from \textbf{Proposition \ref{D}} that 
$$
2n\lceil\f{N}{\pi R^2}\rceil-2n+2=\deg([j_R^*])\geq \deg([j_r^*])   =2n\lceil\f{N}{\pi r^2}\rceil-2n+2.
$$

On the other hand, according to the assumption $\pi r^2<\pi R^2$, we can pick large enough prime number $N$ (thanks to Euclid) such that  
$$
\lceil\f{N}{\pi R^2}\rceil < \lceil\f{N}{\pi r^2}\rceil   ,
$$
which leads to a contradiction. 

\end{proof}

As a corollary we conclude that \\

\textbf{Theorem \ref{main}}  \emph{ 
If $1\leq \pi r^2<\pi R^2$, then it is impossible to squeeze $B_R\x\mathds{S}^1$ into $B_r\x\mathds{S}^1$.
}

\vspace{2cm}

\section{Appendix} \label{appendix}

A bunch of relations between subsets are extensively applied in estimating microlocal singular supports of certain sheaves.  Their explanations and proofs can be found in several references(see \cite{GuKaSch}\cite{GuSch}\cite{KaSch}\cite{Ta}). Here we make a short list of those which take roles in \textbf{Section \ref{projector}}.

\vspace{1cm}

Let $Y\xrightarrow{f} X$ be morphism of manifolds. We have natural morphisms 
$$
T^*X\xleftarrow{f_\pi} T^*X\x_X Y\xrightarrow{f^t} T^*Y.
$$

Microlocal singular supports under Grothendieck's six functors($Rf_*,Rf_!,f^{-1},f^!,\otimes,\underline{Rhom}$) are bounded by the original ones, via $f^t$ and $f_\pi$, in the sense of the following three propositions:

\begin{prop}\label{push}( \cite{KaSch},Proposition 5.4.4)
Let $\G\in D(Y)$ and assume $f$ is proper on $supp(\G)$, then 
$$
SS(Rf_*\G)=SS(Rf_!\G)\subset f_\pi((f^t)^{-1}(SS(\G))).
$$
\end{prop}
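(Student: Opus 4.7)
The plan is to first observe that, under the properness of $f|_{\mathrm{supp}(\G)}$, the natural map $Rf_!\G \to Rf_*\G$ is an isomorphism, so the equality $SS(Rf_*\G) = SS(Rf_!\G)$ is automatic and only the inclusion for $Rf_!$ requires proof. I would proceed by contraposition: fix $p_0 = (x_0, \xi_0) \in T^*X$ lying outside $f_\pi((f^t)^{-1}(SS(\G)))$, which unwinds to the statement that for every $y \in f^{-1}(x_0)$ the covector $(y,{}^tdf(y)\cdot \xi_0) \in T^*Y$ is outside $SS(\G)$. Because $SS(\G)$ is closed and $f^{-1}(x_0)\cap \mathrm{supp}(\G)$ is compact, a covering/continuity argument upgrades this to open neighborhoods $U \ni p_0$ in $T^*X$ and $W \supset f^{-1}(x_0)\cap \mathrm{supp}(\G)$ in $Y$ such that $(y,{}^tdf(y)\cdot \xi) \notin SS(\G)$ whenever $(x,\xi) \in U$, $y \in W$, and $f(y)=x$.

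Next I would test against an arbitrary $C^1$ function: pick $\phi$ near some $x$ with $\phi(x)=0$ and $d\phi(x) \in U$, and pull it back to $\phi\circ f$ on $Y$. The chain rule gives $d(\phi\circ f)(y) = {}^tdf(y)\cdot d\phi(x)$, which by the choice of $U$ and $W$ lies outside $SS(\G)$ for every $y \in W$ with $f(y)=x$. Applying the defining property of $SS(\G)$ to $\phi\circ f$ at each such $y$ then yields $R\Gamma_{\{\phi\circ f \geq 0\}}(\G)_y \cong 0$ on $W$; the stalk is trivially zero on $Y\setminus \mathrm{supp}(\G)$. Hence $R\Gamma_{\{\phi\circ f\geq 0\}}(\G)$ vanishes pointwise on the compact fiber-intersection $f^{-1}(x)\cap \mathrm{supp}(\G)$.

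To transfer the vanishing downstairs I would invoke proper base change. The identity $\{\phi\circ f \geq 0\} = f^{-1}\{\phi\geq 0\}$ together with proper base change for $Rf_!$ produces an isomorphism of the shape
$$
(R\Gamma_{\{\phi\geq 0\}}Rf_!\G)_x \;\cong\; R\Gamma_c\!\left(f^{-1}(x)\,;\,R\Gamma_{\{\phi\circ f\geq 0\}}(\G)|_{f^{-1}(x)}\right),
$$
whose right hand side vanishes by the previous step, so $p_0 \notin SS(Rf_!\G)$. The step I expect to be the most delicate is precisely this base change identification: one must check that $R\Gamma_{\{\phi\geq 0\}}(-)$ interacts correctly with both pullback along a fiber inclusion and with $Rf_!$. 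An efficient way to handle this is to apply the distinguished triangle $j_! j^{-1}\to \mathrm{id}\to R\Gamma_Z \to +1$ (for $Z=\{\phi\geq 0\}$ with open complement $j$) both before and after $Rf_!$, then invoke proper base change termwise, using compactness of $f^{-1}(x)\cap \mathrm{supp}(\G)$ to conclude.
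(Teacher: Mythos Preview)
The paper does not supply its own proof of this proposition: it is stated in the Appendix purely as a citation of \cite[Proposition~5.4.4]{KaSch}, with no argument given. So there is nothing in the paper to compare your attempt against.

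That said, your sketch is essentially the standard Kashiwara--Schapira argument and is correct in outline. A couple of small points are worth tightening. First, the paper (and \cite{KaSch}) phrases the micro-support test with the open half $\{\phi>0\}$, whereas you write $\{\phi\geq 0\}$; these are equivalent characterizations but you should be consistent with whichever one you invoke. Second, in the compactness step you need to be a bit more careful: the condition ``$(y,{}^t df(y)\cdot\xi)\notin SS(\G)$'' gives, for each $y$, a neighborhood in $T^*Y$ (not directly in $T^*X$) on which the vanishing holds; to extract a single neighborhood $U\subset T^*X$ you must use continuity of $(y,\xi)\mapsto(y,{}^t df(y)\cdot\xi)$ together with compactness of $f^{-1}(x_0)\cap\mathrm{supp}(\G)$ and closedness of $SS(\G)$. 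You allude to this but the quantifiers should be made precise. Finally, your base-change identification is the right idea; the cleanest formulation is exactly the distinguished-triangle argument you describe, applied together with the proper base change isomorphism $(Rf_!\G)_x\cong R\Gamma_c(f^{-1}(x);\G|_{f^{-1}(x)})$.
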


\begin{prop}\label{pull}( \cite{KaSch},Proposition 5.4.13)
Let $\F\in D(X)$ and assume $f$ is non-characteristic for $SS(\F)$, i.e., $f_\pi^{-1} (SS(\F))\cap T_Y^*X\subset Y\x_X T_X^*X$. Then\\
\vspace{-0.4cm}

(i) $SS(f^{-1}\F)\subset(f^t)(f_\pi^{-1}(SS(\F)))$.

(ii) $ f^{-1}\F \otimes \omega_{Y/X} \cong f^!\F$.
\end{prop}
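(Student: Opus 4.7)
The plan is to reduce both statements to the canonical cases of a submersion and a closed embedding via the graph factorization $f = p \circ \iota_f$, where $\iota_f : Y \hookrightarrow Y \times X$, $y \mapsto (y, f(y))$, is a closed embedding and $p : Y \times X \to X$ is the second projection. The non-characteristic hypothesis transports cleanly: since $p$ is itself non-characteristic for every sheaf (the differential $dp^*$ is injective on covectors), $p^{-1}\F$ is non-characteristic for $\iota_f$ iff $f$ is for $\F$, and the bounds on $f^t(f_\pi^{-1}(SS(\F)))$ and the twist $\omega_{Y/X}$ factor compatibly through the two halves. For the submersion $p$, one has $p^{-1}\F \cong \K_Y \boxtimes \F$, so (i) becomes the external-product identity $SS(\K_Y \boxtimes \F) = T^*_Y Y \times SS(\F) = p^t(p_\pi^{-1}(SS(\F)))$, and (ii) is the standard smooth base-change $p^!\F \cong \omega_{Y/X} \boxtimes \F \cong p^{-1}\F \otimes \omega_{Y/X}$.

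It remains to treat the closed embedding $\iota_f$ acting on $p^{-1}\F$. For (i), I would use the local criterion from the definition of $SS$: fix $(y_0, \eta_0) \notin \iota_f^t(\iota_{f,\pi}^{-1}(SS(p^{-1}\F)))$, pick a $C^1$ test function $\psi$ on $Y$ with $\psi(y_0) = 0$ and $d\psi(y_0)$ close to $\eta_0$, and extend $\psi$ to $\tilde{\psi}$ on a neighborhood of $(y_0, f(y_0))$ in $Y \times X$. The non-characteristic condition is precisely what guarantees the freedom to arrange $d\tilde{\psi}(y_0, f(y_0))$ outside $SS(p^{-1}\F)$; by the microlocal vanishing criterion this forces $(R\Gamma_{\tilde{\psi} \geq 0}\,p^{-1}\F)_{(y_0, f(y_0))} \cong 0$, and pulling back by $\iota_f^{-1}$ via $\psi = \tilde{\psi} \circ \iota_f$ yields $(R\Gamma_{\psi \geq 0}\,f^{-1}\F)_{y_0} \cong 0$, the desired local vanishing. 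For (ii), start from the canonical natural transformation $\iota_f^{-1}(-) \otimes \omega_{Y/(Y\times X)} \to \iota_f^!(-)$, which always exists, and promote it to an isomorphism under the non-characteristic hypothesis by a Morse-theoretic deformation: interpolate $\iota_f$ through a smooth family of non-characteristic closed embeddings, use part (i) to keep the micro-supports uniformly inside a fixed closed conic set, and conclude that the stalks on both sides of the canonical map are locally constant in the parameter, reducing to a trivially verified initial configuration.

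The main obstacle is the closed-embedding case of (i): one must check that the non-characteristic condition really does supply the geometric freedom to lift $\eta_0$ to an extension $\tilde{\psi}$ whose differential avoids $SS(p^{-1}\F)$, together with the base-change identity $\iota_f^{-1} R\Gamma_{\tilde{\psi} \geq 0} \cong R\Gamma_{\psi \geq 0} \iota_f^{-1}$ needed to transport the vanishing from $Y \times X$ to $Y$. Both are standard microlocal maneuvers but require careful verification. Once (i) is in hand, the deformation step for (ii) is conceptually clean.
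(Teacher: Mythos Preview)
The paper itself gives no proof of this proposition: it is listed in the Appendix as a citation of Kashiwara--Schapira (Proposition~5.4.13), with the explicit remark that ``explanations and proofs can be found in several references.'' So there is nothing to compare against in the paper proper.

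Your graph factorization $f = p \circ \iota_f$ is exactly the reduction used in \cite{KaSch}, and your treatment of the submersion half is correct and complete. The closed-embedding half is where the real content lies, and here your sketch has a genuine gap. For part~(i) you invoke the identity $\iota_f^{-1} R\Gamma_{\{\tilde{\psi}\geq 0\}} \cong R\Gamma_{\{\psi\geq 0\}}\,\iota_f^{-1}$ to transport the local vanishing; but this base-change for $R\Gamma_Z$ along a closed embedding is \emph{not} a general fact---it is essentially equivalent to what you are trying to prove, and fails without the non-characteristic hypothesis. Kashiwara--Schapira do not argue this way: they instead use the refined microlocal cut-off lemma (their Proposition~5.4.8 and the surrounding machinery on normal deformations) to control $SS(\iota_f^{-1}\G)$ directly. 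Your test-function-lifting picture is the right intuition, but it does not constitute a proof as written.

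For part~(ii) your deformation-of-embeddings argument is creative but nonstandard, and it is circular as stated: to know that the stalks of both sides vary locally constantly along the deformation you would need micro-support control of the \emph{family}, which is a stronger statement than part~(i) alone provides. The argument in \cite{KaSch} is more direct: after reducing to a linear subspace of a vector space, one checks the canonical morphism $\iota_f^{-1}(-)\otimes\omega_{Y/X}\to\iota_f^!(-)$ is an isomorphism by an explicit Koszul-type computation (or equivalently via the microlocal Morse lemma), using the non-characteristic hypothesis to guarantee acyclicity of the relevant local cohomology.
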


\begin{prop}\label{tenhom}( \cite{KaSch},Proposition 5.4.14) Let $\F,\G\in D(X)$. Then\\
(i)   $    SS(\F)\cap SS(\G)^a\subset T_X^*X\Rightarrow SS(\F\otimes\G)\subset SS(\F)+SS(\G).    $\\
(ii) $  SS(\F)\cap SS(\G)\subset T_X^*X\Rightarrow SS(\underline{Rhom}(\F;\G))\subset SS(\F)^a + SS(\G).$
\end{prop}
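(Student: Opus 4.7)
The plan is to deduce both bounds from the non-characteristic pull-back estimate of Proposition \ref{pull} by factoring $\otimes$ and $\underline{Rhom}$ through the external product on $X\x X$ and then restricting to the diagonal $\delta_X\colon X\hookrightarrow X\x X$. The strategy is parallel for (i) and (ii); only the signs and the shift contributed by the dualizing complex differ.

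First I would establish external analogues on $X\x X$, namely
\[
SS(\F\boxtimes\G)\subset SS(\F)\x SS(\G)
\]
and, more subtly,
\[
SS(\underline{Rhom}(q_1^{-1}\F,\,q_2^!\G))\subset SS(\F)^a\x SS(\G),
\]
where $q_1,q_2\colon X\x X\to X$ are the projections and the right-hand sides live in $T^*X\x T^*X\cong T^*(X\x X)$. The $\boxtimes$-bound follows directly from the defining criterion for micro-support: if $\phi(x,y)$ is a $C^1$-test function at $(x_0,y_0)$ with $d\phi(x_0,y_0)=(\xi,\eta)$ outside $SS(\F)\x SS(\G)$, then freezing one variable reduces the vanishing of the stalk $(R\Gamma_{\{\phi>0\}}(\F\boxtimes\G))_{(x_0,y_0)}$ to the corresponding vanishing for $\F$ or $\G$ via a Fubini-type argument for local cohomology. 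The $\underline{Rhom}$-version is handled analogously, with $q_2^!$ contributing only a shift by the relative dualizing complex along the second factor.

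For part (i) I would use the identity $\F\otimes\G\cong\delta_X^{-1}(\F\boxtimes\G)$. The canonical geometric data associated with $\delta_X$ is $\delta_{X,\pi}\colon T^*(X\x X)\x_{X\x X}X\hookrightarrow T^*(X\x X)$ and $\delta_X^t\colon(x,\xi,\eta)\mapsto(x,\xi+\eta)$. The non-characteristic hypothesis for $\delta_X$ with respect to $SS(\F)\x SS(\G)$ asks that whenever $\xi\in SS(\F)_x$ and $\eta\in SS(\G)_x$ satisfy $\xi+\eta=0$ one has $\xi=\eta=0$; this is precisely the stated condition $SS(\F)\cap SS(\G)^a\subset T^*_XX$. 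Proposition \ref{pull}(i) then delivers
\[
SS(\F\otimes\G)\subset\delta_X^t\,\delta_{X,\pi}^{-1}(SS(\F)\x SS(\G))=SS(\F)+SS(\G).
\]
For part (ii) I would use $\underline{Rhom}(\F,\G)\cong\delta_X^!\,\underline{Rhom}(q_1^{-1}\F,q_2^!\G)$, and observe that for the closed embedding $\delta_X$ the functors $\delta_X^!$ and $\delta_X^{-1}$ differ only by tensoring with the invertible relative dualizing sheaf $\omega_{X/X\x X}$ (Proposition \ref{pull}(ii)), so micro-supports transform in the same way. Running the same pull-back argument with the external bound $SS(\F)^a\x SS(\G)$ replaces the non-characteristic hypothesis by $(x,-\xi)+(x,\eta)=0$, i.e.\ $\xi=\eta$ with $\xi\in SS(\F)_x\cap SS(\G)_x$; this is exactly $SS(\F)\cap SS(\G)\subset T^*_XX$, and the conclusion $SS(\underline{Rhom}(\F,\G))\subset SS(\F)^a+SS(\G)$ drops out.

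The main obstacle will be the external bound for $\underline{Rhom}(q_1^{-1}\F,q_2^!\G)$. The $\boxtimes$-bound is essentially a pointwise computation from the definition, but its $\underline{Rhom}$-counterpart requires controlling the interaction of the dualizing complex $q_2^!\K_X$ with the local cohomology functor $R\Gamma_{\{\phi>0\}}$ and a microlocal cutoff (or Mittag-Leffler) argument to localize along each factor separately. Once those external bounds are in place, everything else is formal manipulation with Grothendieck's six operations and the cotangent geometry of the diagonal embedding, which is why Kashiwara--Schapira derive (i) and (ii) in one stroke from the non-characteristic pull-back formula.
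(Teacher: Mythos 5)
This proposition is quoted in the paper's appendix directly from Kashiwara--Schapira, \emph{Sheaves on Manifolds}, Proposition 5.4.14, and the paper itself gives no proof; so the relevant comparison is with the reference, and there your sketch does follow the canonical route. The reduction via $\F\otimes\G\cong\delta_X^{-1}(\F\boxtimes\G)$ and $\underline{Rhom}(\F,\G)\cong\delta_X^{!}\,\underline{Rhom}(q_1^{-1}\F,q_2^{!}\G)$, the identification of $T^*_X(X\times X)$ as $\{(x,x;\xi,-\xi)\}$, the resulting translation of the non-characteristic hypothesis into $SS(\F)\cap SS(\G)^a\subset T^*_XX$ (resp.\ $SS(\F)\cap SS(\G)\subset T^*_XX$), the observation that $\delta_X^{!}$ and $\delta_X^{-1}$ agree up to tensoring with $\omega_{X/X\times X}$ under the non-characteristic hypothesis, and the final formula $\delta_X^{t}\circ\delta_{X,\pi}^{-1}$ yielding the fibrewise sum of micro-supports are all correct and are exactly the steps in the reference. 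The one place you gloss is precisely the one you flag as the main obstacle: the external estimates $SS(\F\boxtimes\G)\subset SS(\F)\times SS(\G)$ and $SS(\underline{Rhom}(q_1^{-1}\F,q_2^{!}\G))\subset SS(\F)^{a}\times SS(\G)$ are not "essentially pointwise Fubini" --- a test function $\phi(x,y)$ whose codifferential lies off $SS(\F)\times SS(\G)$ does not have product sublevel sets, so one cannot in general deduce the vanishing of $\bigl(R\Gamma_{\{\phi>0\}}(\F\boxtimes\G)\bigr)_{(x_0,y_0)}$ by freezing a variable. In Kashiwara--Schapira these bounds are proved earlier in \S 5.4 via the microlocal cut-off lemma and the non-characteristic deformation lemma, and that is where the real analytic content of the proposition resides; with those external bounds as granted inputs, the rest of your argument is correct and matches the source.
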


In addition, we need a more accurate control of microlocal singular supports under projection maps. Let $E$ be a nontrivial finite-dimensional real vector space and $p:X\x E\rightarrow X$ be the projection map. If an object $\F$ is nonsingular in a neighborhood of $\{(x,\omega)\}\x T_E^*E$ in $T^*(X \x E)$, then its projection along $E$ will remain nonsingular at $(x,\omega)$. 

\begin{prop}\label{key}( \cite{Ta},Corollary 3.4)
Let $\kappa:T^*X\x E\x E^\star\rightarrow T^*X\x E^\star$ be the projection and let $i:T^*X\rightarrow T^*X\x E^\star$ be the closed embedding $(x,\omega)\mapsto(x,\omega,0)$ of zero-section. Then for $\F\in D(X\x E)$ we have
$$
SS(Rp_!\F),SS(Rp_*\F)\subset i^{-1}\overline{\kappa(SS(\F))}.
$$
\end{prop}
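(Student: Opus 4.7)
The plan is to derive the estimate by compactifying the $E$-fibers of the projection $p$ and reducing to the proper push-forward bound of \textbf{Proposition \ref{push}}. The closure operator appearing in $\overline{\kappa(SS(\F))}$ will enter precisely as the micro-support contribution of the extended sheaf at boundary points introduced by the compactification.

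First I would reduce the two estimates to a single case via Verdier duality. Since $D(Rp_!\F)\cong Rp_*(D\F)$ and $SS(D\F)=SS(\F)^a$, and since the antipode commutes with $\kappa$, with the zero-section embedding $i$, and with topological closure, the inclusions for $Rp_!\F$ and $Rp_*\F$ are formally equivalent; so it is enough to prove the statement for $Rp_!$.

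Next, fix a compactification $E\hookrightarrow\bar{E}$ (for instance the one-point or sphere compactification), let $j:X\x E\hookrightarrow X\x\bar{E}$ denote the open inclusion, and let $\bar{p}:X\x\bar{E}\to X$ be the induced projection, which is now proper. Setting $\bar{\F}:=j_!\F$ one has $Rp_!\F\cong R\bar{p}_!\bar{\F}$, and \textbf{Proposition \ref{push}} yields
$$
SS(Rp_!\F)\subset \bar{p}_\pi\bigl((\bar{p}^t)^{-1}(SS(\bar{\F}))\bigr).
$$
Over the interior $X\x E$ one has $SS(\bar{\F})=SS(\F)$, whose image under $\bar{p}_\pi\circ(\bar{p}^t)^{-1}$ is exactly $\{(x,\omega):(x,\omega,0)\in\kappa(SS(\F))\}$ --- the ``finite'' part of the claimed bound. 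At a boundary point $(x,e_\infty;\omega,\eta_\infty)$ with $e_\infty\in\bar{E}\setminus E$, one unwraps the defining vanishing $R\Gamma_{\{\psi>0\}}(j_!\F)\cong 0$ with a suitable boundary-defining function $\psi$ near $e_\infty$ to see that $(x,e_\infty;\omega,\eta_\infty)\in SS(\bar{\F})$ precisely when there is a sequence $(x_n,e_n;\omega_n,\eta_n)\in SS(\F)$ with $(x_n,\omega_n)\to(x,\omega)$, $e_n\to e_\infty$, and $\eta_n\to\eta_\infty$. Specializing to $\eta_\infty=0$ and pushing down by $\bar{p}_\pi\circ(\bar{p}^t)^{-1}$ identifies the boundary contribution with the closure piece $i^{-1}\bigl(\overline{\kappa(SS(\F))}\setminus\kappa(SS(\F))\bigr)$, so that the interior and boundary parts together give $SS(Rp_!\F)\subset i^{-1}\overline{\kappa(SS(\F))}$.

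The main obstacle is the micro-support computation at the boundary of $\bar{E}$: one must argue rigorously that sequences in $SS(\F)$ whose base $e$-coordinate escapes to infinity correspond exactly to the boundary singularities of $j_!\F$, and that the topological closure on the micro-support side matches the closure on the target $T^*X\x E^\star$ side. This is a careful local calculation in normal coordinates near the boundary, reducing to a one-dimensional non-characteristic test; once settled, the claim is immediate from \textbf{Proposition \ref{push}} together with the duality reduction of the first paragraph.
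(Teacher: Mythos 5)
The paper does not prove this statement; it is quoted verbatim from Tamarkin \cite{Ta}, Corollary~3.4, alongside the other micro-support calculus rules in the Appendix, with proofs deferred to the references. So what can be assessed is whether your proposed argument would actually establish the result.

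Your high-level strategy (compactify the $E$-fibers, apply the proper push-forward estimate, read off the boundary contribution) is a reasonable one, but there is a genuine gap at the step you yourself flag as ``the main obstacle,'' and it is not a detail that can be deferred --- it is exactly where the content of the proposition lives. The claim that $(x,e_\infty;\omega,\eta_\infty)\in SS(j_!\F)$ holds ``precisely when'' there is a sequence $(x_n,e_n;\omega_n,\eta_n)\in SS(\F)$ with $e_n\to e_\infty$ and $(x_n,\omega_n,\eta_n)\to(x,\omega,\eta_\infty)$ is false as stated: extension by zero creates genuinely new singular directions along the divisor $\bar E\setminus E$ (for instance, the whole conormal half-line at the boundary point already appears for $\F=\K_{(1,\infty)}$, whose $SS$ in $E$ is the zero section plus a single conormal at $1$). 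More importantly, the direction of the equivalence that you actually need --- that every point $(x,e_\infty;\omega,0)$ in $SS(j_!\F)$ lying over the zero covector of $T^*_{e_\infty}\bar E$ arises from such a limiting sequence in $SS(\F)$ --- is asserted, not proved. This is the only place where the topological closure $\overline{\kappa(SS(\F))}$ enters, so without it the argument reduces to the trivial interior inclusion $i^{-1}\kappa(SS(\F))$. You also never reconcile the two meanings of ``$\eta_n\to 0$'': the cotangent coordinate on $\bar E$ near $e_\infty$ differs from the Euclidean coordinate $E^\star$ by a factor blowing up like $|e|^2$, so asking $\eta_n\to 0$ in the chart at infinity is a strictly stronger condition than $\eta_n\to 0$ in $E^\star$. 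This happens to work in your favor (the compactified bound would be a subset of the claimed one), but it has to be stated and used, not left implicit.

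Two smaller remarks. First, the Verdier-duality reduction from $Rp_*$ to $Rp_!$ implicitly invokes $D\circ D\cong\mathrm{id}$, which is not available here since the paper works with (possibly non-constructible) locally bounded complexes; it is safer to run the compactification argument for $Rj_*\F$ and $j_!\F$ in parallel, using $SS(D\G)\subset SS(\G)^{\mathbf a}$ only in the one direction that always holds. Second, the approach one finds in the cited sources avoids the boundary analysis of $j_!\F$ entirely: one verifies the defining vanishing condition for $SS(Rp_!\F)$ and $SS(Rp_*\F)$ directly from the hypothesis $(x_0,\omega_0,0)\notin\overline{\kappa(SS(\F))}$, whose whole point is that it yields a bound on $|\eta|$ that is \emph{uniform} over all of $E$, so that the test function $\phi\circ p$ is non-characteristic for $\F$ on an entire tube $W\times E$. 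That route is shorter and has no boundary micro-support to control, and I would recommend it over the compactification unless you are prepared to establish a precise upper bound for $SS(j_!\F)$ along $\bar E\setminus E$.
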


\vspace{1cm}

\end{document}